\newtheorem{theorem}{Theorem}[section]
\newtheorem{definition}[theorem]{Definition}
\newtheorem{remark}[theorem]{Remark}
\newtheorem{claim}{Claim}
\numberwithin{equation}{section}
\font\script=rsfs10 at 11pt
\def\H{{\mbox{\script H}\;}}
\def\N{\mathbb N}
\def\R{\mathbb R}
\def\S{\mathbb S}
\def\eps{\varepsilon}
\def\eu{{\rm eucl}}
\def\step#1#2{\par\noindent{\underline{\it Step~#1.}}\emph{ #2}\\}
\def\case#1#2{\par\noindent{\underline{\it Case~#1.}}\emph{ #2}\\}
\def\part#1#2{\par\noindent{\underline{\it Part~#1.}}\emph{ #2}\\}
\def\XXint#1#2#3{{\setbox0=\hbox{$#1{#2#3}{\int}$} \vcenter{\vspace{-1pt}\hbox{$#2#3$}}\kern-.5\wd0}}
\def\Xint#1{\mathchoice {\XXint\displaystyle\textstyle{#1}}{\XXint\textstyle\scriptstyle{#1}}{\XXint\scriptstyle\scriptscriptstyle{#1}}{\XXint\scriptscriptstyle\scriptscriptstyle{#1}}\!\int}
\def\intmed{\Xint{\hbox{---}}}
\def\res{\mathop{\hbox{\vrule height 7pt width .5pt depth 0pt \vrule height .5pt width 6pt depth 0pt}}\nolimits}
\newcounter{mt}
\def\maintheorem#1#2#3{\par \medskip \noindent {\bf Theorem~\mref{#1}}~(#2).~{\it #3}\par}
\def\mref#1{\Alph{#1}}
\def\maintheoremdeclaration#1{\stepcounter{mt}\newcounter{#1}\setcounter{#1}{\arabic{mt}}}
\title[$\eps-\eps^\beta$ property, boundedness of isoperimetric sets and applications]{The $\eps-\eps^\beta$ property, the boundedness of isoperimetric sets in $\R^N$ with density, and some applications}
\author{E. Cinti}
\author{A. Pratelli}
\begin{document}

\begin{abstract}
We show that every isoperimetric set in $\R^N$ with density is bounded if the density is continuous and bounded by above and below. This improves the previously known boundedness results, which basically needed a Lipschitz assumption; on the other hand, the present assumption is sharp, as we show with an explicit example. To obtain our result, we observe that the main tool which is often used, namely a classical ``$\eps-\eps$'' property already discussed by Allard, Almgren and Bombieri, admits a weaker counterpart which is still sufficient for the boundedness, namely, an ``{$\eps-\eps^\beta$}'' version of the property. And in turn, while for the validity of the first property the Lipschitz assumption is essential, for the latter the sole continuity is enough. We conclude by deriving some consequences of our result about the existence and regularity of isoperimetric sets.
\end{abstract}

\maketitle

\section{Introduction}

This paper deals with the isoperimetric problem in $\R^N$ with density. More precisely, we consider a given l.s.c. function $f:\R^N\to \R^+$ (the ``density'') and we define the weighted volume and perimeter of a set $E\subseteq\R^N$ as
\begin{align}\label{defweight}
| E|_f := \int_E f(x)\, d\H^N(x)\,, &&
P_f(E):= \int_{\partial^* E} f(y)\, d\H^{N-1}(y)\,,
\end{align}
where for every set $E$ of locally finite perimeter we denote as usual by $\partial^* E$ its reduced boundary, while $P_f(E)=+\infty$ for every set which is not locally of finite perimeter (the basic properties of sets of finite perimeter will be briefly recalled in Section~\ref{sec:finper}). The \emph{isoperimetric problem}, then, consists in searching for sets of minimal (weighted) perimeter among those of fixed (weighted) volume. The study of the isoperimetric problem in $\R^N$ with density has been deeply studied in last years, also because of its close connection with the isoperimetric problems on Riemannian manifolds (a short and incomplete list is~\cite{CMV,CFMP,DHHT,FM,FMP2,Morgan2005,MP,RCBM}).\par

The three main questions which one wants to understand are usually the existence, the boundedness, and the regularity of isoperimetric sets (only in very specific examples one can try to determine explicitely the minimizers). Concerning the boundedness, it is to be pointed out that it is not only interesting by itself, but it is also important when proving the existence (roughly speaking, when trying to show the existence of an isoperimetric set for volume $m$, it is useful to know that there do not exist unbounded isoperimetric sets for volumes less than $m$).\par

We will be able to give some new results concerning all the three questions; in particular, we will find a sharp boundedness theorem (Theorem~\ref{obvious}).\par

One important tool in many of the works on isoperimetric problems is a classical ``$\eps-\eps$'' property already discussed by Allard, by Almgren, and by Bombieri since the 1970's (see for instance~\cite{All1, All2, Alm, BigReg, Bom}); this property basically means that a certain set can be locally modified in order to increase its volume by $\eps$, while the perimeter increases at most by $C\eps$ (we leave the formal definitions to Section~\ref{preliminaries}). This property is trivial to establish if the density and the set are supposed to be regular enough, but its validity is also known if the set is even just of locally finite perimeter and the density is only Lipschitz continuous (see~\cite{Morgan2003,Morganbook}); on the other hand, it is very easy to observe that the validity may fail as soon as the density is not Lipschitz. This is more or less the reason why most of the different boundedness and regularity results in this context use at least a Lipschitz assumption on the density.\par

In this paper, we start from the following observation. It is classical and very easy to prove that, if an isoperimetric set fulfills the $\eps-\eps$ property, then it must be bounded; but in fact, to get its boundedness, a weaker property is really needed, namely, the ``$\eps-\eps^\beta$'' property, which states that it is possible to increase its volume by $\eps$ and its perimeter at most by $C\eps^\beta$, for some $\beta\geq\frac{N-1}N$ (this is the content of our Theorem~\mref{boundedness}). Then, our main result implies that the $\eps-\eps^{\frac{N-1}N}$ property is always true for any set of locally finite perimeter whenever the density is continuous. Moreover, for every $0< \alpha\leq 1$ there exists some $\beta=\beta(\alpha,N)$, with $\frac{N-1}N<\beta\leq 1$ such that, if the density is $C^{0,\alpha}$, then the $\eps-\eps^\beta$ property holds (this will be proved in Theorem~\mref{eps-epsbeta}). Putting together the two results, the following consequence will be immediate (the meaning of ``essentially bounded'' density is clarified in Definition~\ref{essbdd}, anyway any density which is bounded from above and below is essentially bounded).
\begin{theorem}\label{obvious}
Assume that $f$ is continuous and essentially bounded. Then every isoperimetric set is bounded.
\end{theorem}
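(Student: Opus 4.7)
The plan is to combine the two main theorems of the paper, exactly as the author anticipates when he says this consequence ``will be immediate''. I would fix an isoperimetric set $E$; in particular $E$ has locally finite perimeter, so any structural property that Theorem~\mref{eps-epsbeta} attributes to sets of locally finite perimeter is available for $E$.

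First, I would invoke Theorem~\mref{eps-epsbeta} in its weakest form: the continuity of $f$ alone guarantees that $E$ enjoys the $\eps$--$\eps^\beta$ property with the exponent $\beta=\frac{N-1}{N}$. This is precisely the threshold value admitted as a hypothesis in Theorem~\mref{boundedness}.

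Next I would apply Theorem~\mref{boundedness} to $E$: the three ingredients required by that theorem, namely essential boundedness of $f$ (assumed here), isoperimetricity of $E$, and the $\eps$--$\eps^\beta$ property with $\beta\geq\frac{N-1}{N}$ (just obtained), are all in place. The conclusion is that $E$ is bounded.

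The expected obstacle at this assembly stage is essentially none, since all the serious work sits inside Theorems~\mref{boundedness} and~\mref{eps-epsbeta}. What seems worth underlining in writing it up is that the exponent $(N-1)/N$ produced by the mere continuity of $f$ matches exactly the borderline exponent admitted by Theorem~\mref{boundedness}, so the chain of implications closes on the nose; the sharpness example mentioned in the abstract confirms that this match is not a coincidence and that one cannot hope to dispense with continuity.
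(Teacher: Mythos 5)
Your overall plan — fix an isoperimetric set $E$, extract the $\eps$--$\eps^\beta$ property from Theorem~\mref{eps-epsbeta}, then feed it into Theorem~\mref{boundedness} — is exactly the one the paper intends. But your description of the hypotheses of Theorem~\mref{boundedness} contains a mis-statement that, if taken at face value, would prove a false theorem, and it hides the one place where continuity (as opposed to mere boundedness) is actually used.

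You describe the relevant hypothesis of Theorem~\mref{boundedness} as ``the $\eps$--$\eps^\beta$ property with $\beta\geq\frac{N-1}{N}$''. That is not what the theorem says: it requires either $\beta>\frac{N-1}N$ with \emph{some} constant $C$, or $\beta=\frac{N-1}N$ with \emph{every} sufficiently small constant $C$. The distinction is not cosmetic. The first part of Theorem~\mref{eps-epsbeta}, applied with $\alpha=0$ (i.e.\ $f$ merely essentially bounded, no continuity needed), already yields the $\eps$--$\eps^{\frac{N-1}N}$ property with \emph{some} $C$; so if ``$\beta\geq\frac{N-1}N$ with some $C$'' sufficed, every essentially bounded density would have only bounded isoperimetric sets, contradicting the explicit counterexample in Section~\ref{example}. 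The role of the continuity assumption is precisely to promote ``some $C$'' to ``any $C>0$'' in the borderline exponent $\beta=\frac{N-1}N$ (this is the second, ``moreover'' clause of Theorem~\mref{eps-epsbeta}, and the Remark after it is devoted exactly to this point). Your write-up invokes the continuity only to obtain the exponent $\frac{N-1}N$, which is already available without it, and never invokes the ``with any constant $C$'' strengthening; so as written the chain of implications does not actually close. To repair it, replace the two steps by: continuity plus essential boundedness gives, via the second part of Theorem~\mref{eps-epsbeta}, the $\eps$--$\eps^{\frac{N-1}N}$ property for $E$ with \emph{every} small $C>0$; this is exactly the second admissible hypothesis in Theorem~\mref{boundedness}, whence $E$ is bounded.
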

We underline that this result is sharp. In fact, many examples (see for instance those in~\cite{MP}) show the existence of unbounded isoperimetric sets for densities which are unbounded from above or from below; and on the other hand, in Section~\ref{example} we are able to build the example of an unbounded isoperimetric set for a density which is bounded both from above and from below but not continuous. As we said above, Theorem~\ref{obvious} is stronger than the previously known results, which all needed at least a Lipschitz assumption. More precisely, to give a comparison, we just recall the following very recent boundedness result.
\begin{theorem}[\cite{MP}, Corollary~5.11]
Let $E$ be an isoperimetric set in $\R^N$ with a ${\rm C}^1$ density $f$. Then E is bounded if any of the following three hypotheses hold:
\begin{enumerate}
\item $N=2$ and $f$ is increasing, or
\item $f$ is radial and increasing, or
\item $|Df|\leq Cf$.
\end{enumerate}
\end{theorem}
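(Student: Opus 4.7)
My plan is to unify the three cases via a tail-truncation argument. Assume by contradiction that $E$ is unbounded, so that $V(R):=|E\setminus B_R|_f>0$ for every $R>0$. For admissible $R$, I would produce a competitor in two steps: first replace $E$ by $E\cap B_R$, and then restore the missing $f$-volume $V(R)$ somewhere else. The truncation saves at least the weighted perimeter of $\partial^*E$ outside $B_R$, which by the weighted relative isoperimetric inequality (valid in bounded regions because $f$ is $C^1$, hence locally bounded above and below) is at least $cV(R)^{(N-1)/N}$. Thus it suffices, in each of the three cases, to show that $V(R)$ units of $f$-volume can be restored at perimeter cost $o(V(R)^{(N-1)/N})$ as $R\to\infty$, which contradicts the minimality of $E$.

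For hypothesis (3), I note that the bound $|Df|\leq Cf$ is precisely the log-Lipschitz condition $|D\log f|\leq C$, and it is exactly what is needed to run the Allard--Almgren--Bombieri variation argument yielding the classical $\eps$--$\eps$ property in every fixed bounded region. Indeed, for any smooth compactly supported vector field $X$, the first variations of $|\cdot|_f$ and $P_f$ under the flow $\Phi_t=\mathrm{id}+tX$ are both linear in $t$, with the perimeter constant controlled by $\|DX\|_\infty$ and $C$; tuning $X$ near a regular point of $\partial^*E$ one can redistribute volume $\eps$ at perimeter cost $O(\eps)$. Applying this with $\eps=V(R)$ yields cost $O(V(R))=o(V(R)^{(N-1)/N})$ and closes the argument.

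For hypothesis (2), $f$ radial and increasing, no variation machinery is needed: I would restore the missing volume by gluing to $E$ a small Euclidean ball of $f$-volume $V(R)$, placed in a region of low density near the origin. Since $f$ is radially increasing and continuous, for $R$ large the weighted perimeter of such a ball is comparable to $f(0)^{1/N}V(R)^{(N-1)/N}$, while the truncation saves at least $f(R)^{1/N}V(R)^{(N-1)/N}$ by the weighted isoperimetric inequality on the tail; hence the gain strictly dominates the cost. Case (1), $N=2$ with $f$ increasing, either reduces immediately to case~(2) when ``increasing'' is understood as radially increasing, or is obtained by first applying a planar Steiner symmetrization with respect to the direction of monotonicity of $f$, which preserves $|\cdot|_f$ and does not increase $P_f$ in dimension two; after symmetrization $E$ becomes radial and the argument of case~(2) applies.

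The main obstacle, common to all three cases, will be to justify the truncation at an admissible radius $R$: one must choose $R$ so that the slice $E\cap\partial B_R$ has finite $(N-1)$-dimensional measure and the perimeter splits additively between $E\cap B_R$ and $E\setminus B_R$. This is a standard but non-trivial consequence of the slicing theory for sets of locally finite perimeter, and I would invoke it rather than check it directly.
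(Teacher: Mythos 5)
This statement is quoted from~\cite{MP} for comparison and is not proved in the present paper; the closest internal model for what you are attempting is the proof of Theorem~A in Section~2, and measuring your sketch against that mechanism reveals the main gap. When you replace $E$ by $E\cap B_R$ you do not merely ``save'' the weighted perimeter of $\partial^*E$ outside $B_R$: you also \emph{create} the spherical interface, of weighted area $g(R)=\H^{N-1}_f(E\cap\partial B_R)$, as in the identity $P_f(E\cap B_R)=P_f(E)-P_f(E\setminus B_R)+2g(R)$ with which the proof of Theorem~A begins. Because of this term, ``savings dominate restoration cost'' does not contradict minimality at any single radius; it only yields the inequality $g(R)\geq \gamma\,V(R)^{\frac{N-1}N}$ for all large $R$. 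Boundedness then follows only by combining this with $V'(R)=-g(R)$ (a.e.) and integrating the differential inequality $-V'\geq\gamma V^{\frac{N-1}N}$, where the exponent strictly less than $1$ is what forces $V$ to vanish at a finite radius (this is the dyadic argument at the end of the proof of Theorem~A). Your proposal omits both the interface term and the integration step, so the announced contradiction never materializes; the point you flag as the ``main obstacle'' (choosing slices where the perimeter splits additively) is by contrast the easy part, handled by Vol'pert and coarea.

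The quantitative inputs you take for granted are also unjustified under the stated hypotheses, which do \emph{not} make $f$ bounded above and below at infinity (think of $f$ radial, increasing and unbounded in cases (1)--(2), or $f=e^{\pm C|x|}$ in case (3)). A ``weighted relative isoperimetric inequality valid in bounded regions'' has constants that degenerate as $R\to\infty$ and cannot be applied uniformly to the unbounded tail $E\setminus B_R$; in case (2), the claimed lower bound $P_f(E\setminus B_R)\geq f(R)^{1/N}V(R)^{\frac{N-1}N}$ does not follow from the one-sided bound $f\geq f(R)$ on the tail, since converting $f$-volume into Euclidean volume needs an upper bound on $f$ there, and a lower bound $f\geq m$ alone does not imply $P_f(F)\geq c\,m^{1/N}|F|_f^{\frac{N-1}N}$ (a set whose bulk lies in a high-density region but whose boundary lies in the low-density region violates it). Moreover, in case (1) Steiner symmetrization with respect to a general monotone planar density neither preserves $|\cdot|_f$ nor decreases $P_f$ (that requires $f$ to be constant along the lines of symmetrization), and it certainly does not render $E$ radial, so the reduction of (1) to (2) is not established. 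The one part that is essentially sound is case (3): $|Df|\leq Cf$ does give a genuine $\eps$--$\eps$ property with cost $O(\eps)=o(\eps^{\frac{N-1}N})$, which is exactly how this hypothesis is exploited in~\cite{MP} and how it fits the scheme of Theorems~A and~B here; but without repairing the two issues above the argument does not close in any of the three cases.
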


Let us now briefly pass to describe our contributions to the questions of the existence and regularity of isoperimetric sets, which come as applications of Theorems~\mref{boundedness} and~\mref{eps-epsbeta}. Concerning the existence, we will only recall that some existence results available in the literature require, as an \emph{a priori} assumption, the boundedness of the isoperimetric sets. Since all these results concern densities which are essentially bounded and continuous, the boundedness assumption can be removed, because it now directly follows from Theorem~\ref{obvious}.\par

Concerning the regularity, instead, we will recall some very classical results, and we check their consequences in view of the $\eps-\eps^\beta$ property that we have established. The theorem that we obtain, Theorem~\ref{regu}, says that if the density $f$ is essentially $C^{0,\alpha}$ then any isoperimetric set is of class $C^{0,\frac\alpha{2N(1-\alpha)+2\alpha}}$. Stronger regularity results are contained in the forthcoming paper~\cite{CP2}.\par\medskip

Observe that, as usual, results on $\R^N$ with density admit counterparts in the context of Riemannian manifolds; however, we do not study the extension here (for an overview of the known results in this direction, see for instance~\cite{Morgan2003}).\par\bigskip

The plan of the paper is the following. In Section~\ref{preliminaries} we give all the formal definitions and the claims of our main results, while in Section~\ref{sec:finper} we recall some basic properties of the sets of finite perimeter. Then, in Sections~\ref{bounded} and~\ref{holder} we give the proof of Theorems~\mref{boundedness} and~\mref{eps-epsbeta}. Later on, in Section~\ref{example} we give the example of a situation where an isoperimetric set is unbounded, while the density is essentially bounded but not continuous. Finally, in Section~\ref{applications} we discuss the questions of the existence and regularity of isoperimetric sets.

\subsection{Preliminary definitions and claims of the main theorems\label{preliminaries}}

This section is devoted to present the relevant definitions that we will need during the paper, and to claim our main results.

We consider a given l.s.c. function $f:\R^N\to \R^+=[0,+\infty]$, such that the points $x$ for which $f(x)=0$ or $f(x)=+\infty$ are locally finite, and we work with the weighted notion of volume and perimeter given by~(\ref{defweight}). For brevity we will denote $\H^k_f:=f \H^k$ for any $k\in\{0, \dots ,N\}$, so that definition~\eqref{defweight} can be rewritten as $|E|_f=\H^N_f(E)$ and $P_f(E)=\H^{N-1}_f(\partial^*E)$; given an open set $A\subseteq\R^N$, we will denote the relative perimeter of $E$ in $A$ as $P_f(E,A)=\H^{N-1}_f(A \cap \partial^* E)$. Sometimes we will need to consider the Euclidean volume, or perimeter, or relative perimeter, of a set $E$, which will be denoted by $|E|_\eu$, $P_\eu(E)$ or $P_\eu(E,A)$ respectively (while we will not use the notation $P(E)$ or $|E|$ to avoid ambiguity). We will always call $B_R$ the open ball centered at the origin and with radius $R$, and $B_R(x)$ the ball centered at $x$ and with radius $R$. The next definition is sometimes useful.

\begin{definition}
For every set $E\subseteq\R^N$, we define the \emph{spherical rearrangement} $E^*\subseteq\R^N$ as
\[
E^*:=\Big\{ x\in\R^N:\, x_1\geq g\big(|x|\big) \Big\}\,,
\]
where $g:\R^+\to\R^+$ is the unique function such that for every $R>0$ one has
\[
\H^{N-1}\big( E^* \cap \partial B_R\big) = \H^{N-1}\big( E \cap \partial B_R\big)\,.
\]
\end{definition}
The definition of spherical rearrangement is not so much useful for a generic density $f$, but it becomes very important when $f$ is radial, thanks to the following result, whose proof can be found for instance in~\cite{MP} or~\cite{FMP}.
\begin{theorem}\label{sphsym}
Assume that $f$ is radial and $E\subseteq\R^N$. Then, one has
\begin{align*}
\big|E^* \big|_f= \big|E \big|_f\,, && P_f(E^*)\leq P_f(E) \,.
\end{align*}
\end{theorem}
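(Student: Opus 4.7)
The plan is to prove the volume equality directly by the coarea formula and the perimeter inequality by realising $E^*$ as the $L^1_{\rm loc}$ limit of iterated polarizations of $E$ through hyperplanes containing the origin.

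Since $f$ is radial, write $f(x)=\varphi(|x|)$. The coarea formula applied to the $1$-Lipschitz function $x\mapsto|x|$ gives
$$|E|_f=\int_0^\infty\varphi(R)\,\H^{N-1}(E\cap\partial B_R)\,dR,$$
and the identical expression holds for $E^*$. By the very definition of the spherical rearrangement, the two integrands coincide for every $R>0$, so the integrals agree and the volume equality follows.

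For the perimeter, fix a hyperplane $H$ through the origin with reflection $\sigma_H$ and distinguished side $H^+$, and define the polarization $P_H E$ to consist of every $x\in H^+$ for which $x\in E$ or $\sigma_H(x)\in E$, together with every $x\in H^-$ for which both $x$ and $\sigma_H(x)$ lie in $E$. Since $f\circ\sigma_H=f$ by radiality, polarization preserves $|\cdot|_f$ and, crucially, does not increase $P_f$. Granted this, a classical Brock--Solynin type result asserts that for a suitable sequence $\{H_n\}$ of hyperplanes through the origin whose normals are dense in $\S^{N-1}$, the iterated polarizations $P_{H_n}\circ\cdots\circ P_{H_1}(E)$ converge in $L^1_{\rm loc}$ to $E^*$; the lower semicontinuity of $P_f$ then yields
$$P_f(E^*)\leq\liminf_{n\to\infty}P_f\!\big(P_{H_n}\circ\cdots\circ P_{H_1}(E)\big)\leq P_f(E).$$

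The main obstacle I anticipate is the rigorous verification of the monotonicity $P_f(P_H E)\leq P_f(E)$. For a smooth $E$ it follows from a direct comparison of the perimeter densities on the two sides of $H$, noting that on each sphere $\partial B_R$ polarization coincides with the spherical rearrangement with respect to the equator $H\cap\partial B_R$, which is well known not to increase the intrinsic spherical perimeter. For a general set of finite perimeter one must mollify $E$ and pass to the limit, taking care that the trace of $\chi_E$ on $H$ does not produce spurious perimeter contributions. The radial invariance of $f$ under $\sigma_H$—which is precisely the reason $H$ has to pass through the origin—is what permits the weight $f$ to be absorbed at every step of the argument.
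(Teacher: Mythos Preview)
The paper does not actually prove this theorem: immediately after the statement it writes ``whose proof can be found for instance in~\cite{MP} or~\cite{FMP}'' and moves on. So there is no paper-proof to compare against, only the cited references (which, in the Euclidean case of \cite{FMP}, proceed by a direct slicing/BV argument rather than by polarization). Your polarization-plus-lower-semicontinuity route is a legitimate alternative, provided you invoke the \emph{universal} sequence version of the Brock--Solynin approximation (so that one fixed sequence of hyperplanes through the origin works simultaneously on every sphere $\partial B_R$).

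There is, however, a genuine gap in your justification of the key inequality $P_f(P_H E)\le P_f(E)$. You claim that on each sphere $\partial B_R$ the induced operation ``coincides with the spherical rearrangement with respect to the equator'' and then appeal to monotonicity of the \emph{intrinsic} spherical perimeter. Both points are off: the induced operation on $\partial B_R$ is again a polarization (with respect to the great hypersphere $H\cap\partial B_R$), not the cap rearrangement; and in any case control of the $(N-2)$-dimensional intrinsic perimeter on each sphere does not bound the $(N-1)$-dimensional perimeter in $\R^N$, because the radial component of $\nu_E$ is invisible to that slicing. The clean argument is submodularity: in the open half-space $H^+$ one has $P_HE=E\cup\sigma_HE$, in $H^-$ one has $P_HE=E\cap\sigma_HE$, and since $f\circ\sigma_H=f$ one reduces to
\[
P_f(E\cup\sigma_HE,H^+)+P_f(E\cap\sigma_HE,H^+)\le P_f(E,H^+)+P_f(\sigma_HE,H^+),
\]
which is the standard inequality $P_f(A\cup B)+P_f(A\cap B)\le P_f(A)+P_f(B)$ localized to $H^+$. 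This holds for arbitrary sets of finite perimeter without any smoothing, and the trace contribution on $H$ vanishes for $\H^1$-a.e.\ choice of hyperplane in any one-parameter family, which is all you need for the approximation.
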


Let us give now the particular definitions that we will use in this paper.
\begin{definition}\label{defepepbe}
Let $E\subseteq \R^N$ be a set of locally finite perimeter, $0\leq \beta\leq 1$, and $C>0$. We say that \emph{$E$ fulfills the $\eps-\eps^\beta$ property with constant $C$} if there exist a ball $B$ and a constant $\bar\eps>0$ such that, for every $|\eps|<\bar\eps$, there is a set $F\subseteq\R^N$ such that
\begin{align*}
F\triangle E \subset\subset B\,, && |F|_f - |E|_f = \eps\,, && P_f(F) \leq P_f(E) + C |\eps|^\beta\,.
\end{align*}
\end{definition}
We give also the following simple definition, which will only be used within the subsequent Definition~\ref{essbdd}.
\begin{definition}
A family $\big\{ U_\delta\big\}_{\delta>0}$ of open subsets of $\R^N$ is said \emph{well-decreasing} if one has that $\R^N\setminus U_\delta$ is bounded for every $\delta>0$, and for any measurable set $E\neq \emptyset$ of finite perimeter one has $\H^{N-1}\big(\partial^* E\cap U_\delta\big) > 0$ for some arbitrarily small $\delta$.
\end{definition}
Notice that, for instance, the sets $U_\delta=\R^N \setminus \overline{B_\delta}$ form a well-decreasing family; more in general, for a family of open sets with $\R^N\setminus U_\delta$ bounded, to be well-decreasing it is enough that $\H^{N-1}\big(\R^N \setminus \bigcup_{\delta>0} U_\delta\big)=0$.\par
In the paper, we will always assume one of the following hypotheses on $f$.
\begin{definition}\label{essbdd}
The l.s.c. function $f:\R^N\to \R^+$ is said to be \emph{essentially bounded} if there exist a well-decreasing family $\big\{U_\delta\big\}_{\delta>0}$ and constants $M=M(\delta)$ such that $\frac 1M < f(x) < M$ for every $x\in U_\delta$. Analogously, $f$ is said to be \emph{essentially $\alpha$-H\"older} for some $0\leq \alpha\leq 1$ if $f$ is essentially bounded, and there exist a well-decreasing family $\big\{U_\delta\big\}_{\delta>0}$ and constants $M=M(\delta)$ such that $|f(x)-f(y)|\leq M |x-y|^\alpha$ for every $x,\,y\in U_\delta$. Observe that $f$ is essentially $\alpha$-H\"older with $\alpha=0$ if and only if it is essentially bounded.
\end{definition}

Notice that, clearly, if $f$ is bounded by above and below then it is also essentially bounded, and similarly if it is $\alpha$-H\"older continuous then it is also essentially $\alpha$-H\"older, hence we could have simply restricted our attention to standard bounded or H\"older densities. Nevertheless, we prefer to use the above slightly more complicate definitions for two reasons. First of all, this allows to consider also the typical class of examples where the density may vanish of explode at the origin, such as
\[
f(x)=\frac{|x|^p}{1+|x|^p}\,, \qquad \hbox{or}\qquad f(x)=1+\frac1{|x|^p}
\]
for $p>0$. Second, as one can see later, this choice does not effect at all any of the proofs, so we can obtain slightly stronger results for free.\par

Another comment deserves to be done concerning the choice of considering only densities which are (at least in the essential sense) bounded both from above and from below. In fact, this is unavoidable, since many examples (see for instance those in~\cite{MP}) enlighten that both existence and boundedness can easily fail otherwise, thus no general result can be obtained without this assumption.\par

We are now in position to claim our two main theorems.

\maintheorem{boundedness}{Boundedness of isoperimetric sets}{Assume that $f$ is essentially bounded and that $E$ is an isoperimetric set fulfilling the $\eps-\eps^\beta$ property, either with $\beta > \frac{N-1}{N}$ and some $C>0$, or with $\beta=\frac{N-1}N$ and \emph{every small} $C>0$. Then, $E$ is bounded.}

Notice that, in the above theorem, we do not need any continuity assumption on $f$: the sole $\eps-\eps^\beta$ property together with the essential boundedness of $f$ is enough to ensure the boundedness of any isoperimetric set $E$.

\maintheorem{eps-epsbeta}{The $\eps-\eps^\beta$ property}{Assume that $f$ is essentially $\alpha$-H\"older for some $0\leq\alpha\leq 1$. Then every set $E$ of locally finite perimeter fulfills the $\eps-\eps^\beta$ property with some $C$, where $\beta$ is defined by
\begin{equation}\label{defbeta}
\beta=\beta(\alpha,N) := \frac{\alpha+(N-1)(1-\alpha)}{\alpha+N(1-\alpha)}\,.
\end{equation}
Moreover, if $f$ is essentially bounded and continuous, then every set $E$ of locally finite perimeter still fulfills the $\eps-\eps^\beta$ property with $\beta=\frac{N-1}N$ and \emph{with any constant $C>0$}.
}

Notice that, once $N$ is fixed, $\beta$ is a continuous and strictly increasing function of $\alpha$ with $\beta=\frac{N-1}N$ for $\alpha=0$, and $\beta=1$ for $\alpha=1$, and moreover $\beta>\alpha$ for every $0\leq \alpha <1$. Notice also that Theorem~\ref{obvious} is an immediate consequence of the two above theorems.
\begin{remark}
It is essential to underline a very delicate point in the claim of Theorem~\mref{eps-epsbeta}, namely, that in the case when $f$ is just essentially bounded and continuous, the $\eps-\eps^\beta$ property with $\beta=\frac{N-1}N$ holds true \emph{for every constant $C>0$} (of course, when $C$ becomes smaller, so does the constant $\bar\eps$ in Definition~\ref{defepepbe}). This is of primary importance: in fact, as the proof of Theorem~\mref{boundedness} will enlighten, when a set fulfills the $\eps-\eps^\beta$ property with some $\beta>\frac{N-1}N$, then the constant $C$ is unessential; on the other hand, if $\beta=\frac{N-1}N$, then it is fundamental that the constant $C$ can be chosen arbitrarily small. And indeed, if $f$ is essentially bounded but not continuous then the $\eps-\eps^{\frac{N-1}N}$ property holds true, but not with any constant $C$, and hence one cannot apply Theorem~\mref{boundedness} and in fact an isoperimetric set can be unbounded, as we will show with the example in Section~\ref{example}.
\end{remark}

\subsection{Basic properties of sets of finite perimeter\label{sec:finper}}

Since a basic knowledge of the theory of sets of finite perimeter is needed for the proof of our results, we recall here very briefly what we are going to use. For a general tractation of this subject, and for the proof of all the claims of this section, the interested reader should refer for instance to~\cite{AFP}.\par

Let then $E\subseteq\R^N$ be a set of locally finite volume. We say that $E$ is \emph{of locally finite perimeter} if the characteristic function $\chi_E$ of $E$ is a $BV_{\rm loc}$ function. In other words, we require $\mu_E:=D\chi_E$ to be a vector valued and locally finite Radon measure. If $E$ is a set of locally finite perimeter, one defines the \emph{reduced boundary} $\partial^* E$ of $E$ as the set of all those points $x\in \R^N$ such that there exists a (necessarily unique) vector $\nu_E(x)\in \S^{N-1}$ with the property that
\[
\lim_{r \searrow 0} \frac{\big| E \cap B(x,r)\big|_{\eu}}{\big| B(x,r)\big|_\eu}=
\lim_{r \searrow 0} \frac{\Big| E \cap B(x,r)\cap \big\{ y:\, (y-x)\cdot \nu_E(x)<0\big\}\Big|_{\eu}}{\big| B(x,r)\big|_\eu}= \frac 12\,.
\]
The vector $\nu_E(x)$ is called the \emph{(measure theoretical) outer normal of $E$ at $x$}.\par
One can prove that $\mu_E$ coincides with the vector valued measure $\nu_E(x) \H^{N-1}\res \partial^* E$; finally, one says that the (Euclidean) perimeter of $E$ is defined as
\[
P_\eu (E) = | \mu_E | (\R^N) = \H^{N-1} \big( \partial^* E\big) = \int_{\partial^* E } 1\, d\H^{N-1}(x)\,. 
\]
It is easy to show that, if the set $E$ is regular enough, then this general notion of perimeter coincides with the usual perimeter, the reduced boundary coincides with the usual topological boundary, and the measure theoretical outer normal coincides with the usual outer normal. However, there exist sets of finite perimeter for which the topological boundary and the reduced boundary do not coincide; for instance, the set of the points in $\R^N$ with rational coordinates has an empty reduced boundary and so null perimeter, but its topological boundary is the whole $\R^N$. We conclude by recalling three classical results, that we will use extensively in the sequel.
\begin{theorem}[Blow-up Theorem]\label{blowupthm}
Let $E\subseteq\R^N$ and $x\in\partial^* E$. For every $\eps>0$, define the blow-up set $E_\eps := \frac 1\eps\, (E -x)$, and call $\mu_\eps:= \mu_{E_\eps}$ and $H=\{x\in \R^N:\, x\cdot \nu(x) <0\}$. Then, when $\eps\searrow 0$, one has that the sets $E_\eps$ converge to $H$ in the $L^1_{\rm loc}$ sense, while the measures $\mu_\eps$ (resp., $|\mu_\eps|$) weak* converge to the measure $\nu_E(x) \H^{N-1}\res \partial H$ (resp., $\H^{N-1}\res \partial H$).
\end{theorem}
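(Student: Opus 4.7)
The plan is to combine a compactness argument for the rescaled sets with a rigidity argument identifying the limit as a half-space; the three claimed convergences then follow simultaneously. The starting point is the scaling identity $|\mu_\eps|(B_R) = \eps^{1-N}\,|\mu_E|(B_{R\eps}(x))$, together with the standard upper density bound $|\mu_E|(B_r(x))\le C\,r^{N-1}$ that holds at every reduced boundary point. The latter follows from the relative isoperimetric inequality applied to $E\cap B_r(x)$, using that both $|E\cap B_r(x)|_\eu$ and $|B_r(x)\setminus E|_\eu$ are of order $r^N$ when $x\in\partial^*E$. This bound gives uniform control on $|\mu_{\eps_n}|(B_R)$ along any sequence $\eps_n\searrow 0$, so $BV$ compactness yields a subsequence (not relabeled) along which $E_{\eps_n}\to F$ in $L^1_{\rm loc}$ for some set $F$ of locally finite perimeter, with $\mu_{\eps_n}\rightharpoonup\mu_F$ weakly $*$ and $|\mu_{\eps_n}|\rightharpoonup\nu$ weakly $*$ for some positive Radon measure $\nu\ge|\mu_F|$.

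Next I would exploit the defining property of the reduced boundary, namely $\mu_E(B_r(x))/|\mu_E|(B_r(x))\to\nu_E(x)$; after rescaling, it reads $\mu_{\eps_n}(B_R)/|\mu_{\eps_n}|(B_R)\to\nu_E(x)$ for every fixed $R>0$. Passing to the weak $*$ limit (on the at most countably many radii where the limit measures put no mass on $\partial B_R$) gives $\mu_F(B_R)=\nu_E(x)\,\nu(B_R)$, whence $\mu_F=\nu_E(x)\,\nu$ as vector-valued measures. Since $|\nu_E(x)|=1$ this forces $\nu=|\mu_F|$, so the weak $*$ limit of the total variations is automatically pinned down as soon as $F$ is identified. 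Moreover, $\mu_F$ having constant direction $\nu_E(x)$ means that $\chi_F$ is a $BV$ function whose distributional gradient is everywhere parallel to $\nu_E(x)$; a $\{0,1\}$-valued function with this property must depend only on the coordinate along $\nu_E(x)$, so $F$ is a half-space with outer normal $\nu_E(x)$. The half-density condition at the origin, inherited by blow-up from the fact that $x\in\partial^* E$, then selects exactly $F=H$.

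Since every subsequence of $\eps_n\searrow 0$ produces the same limit $H$, the whole families $E_\eps$, $\mu_\eps$ and $|\mu_\eps|$ converge to the claimed targets, and the explicit representation $\mu_F=\nu_E(x)\,\H^{N-1}\res\partial H$ is just the standard expression for the boundary measure of a half-space. The step I expect to be the main obstacle is this last rigidity statement: that a set of locally finite perimeter whose reduced-boundary normal is a constant vector is a half-space. This is really De Giorgi's original insight underpinning the very definition of the reduced boundary, and one must carefully manage the interplay between the weak $*$ convergence of the signed measures $\mu_\eps$ and of their total variations $|\mu_\eps|$, since the latter is only lower semicontinuous in general; the above identification $\nu=|\mu_F|$ is precisely what upgrades that lower semicontinuity to an equality and closes the argument.
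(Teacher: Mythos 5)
This is a classical background fact which the paper states without proof, pointing the reader to the reference \cite{AFP} at the start of Section~\ref{sec:finper} for all such properties of sets of finite perimeter; there is therefore no ``paper's own proof'' to compare against beyond the standard textbook argument. Your outline is precisely that standard De Giorgi blow-up proof: scaling, density bounds for $BV$ compactness, passage to the limit in the reduced-boundary ratio $\mu_{\eps_n}(B_R)/|\mu_{\eps_n}|(B_R)\to\nu_E(x)$ to show that the limit measure has constant direction, lower semicontinuity of total variation upgraded to equality via $|\nu_E(x)|=1$, and the rigidity lemma that a $\{0,1\}$-valued $BV$ function with constant-direction gradient is (the indicator of) a half-space, pinned down by the half-density condition. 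The argument is correct in substance; the one point you should make explicit is that, alongside the upper density bound $|\mu_E|(B_r(x))\le C r^{N-1}$, you also need the lower bound $|\mu_E|(B_r(x))\ge c\,r^{N-1}$ valid at every $x\in\partial^*E$ (again from the relative isoperimetric inequality), since this is what guarantees $\nu(B_R)>0$ and makes the ratio $\mu_F(B_R)/\nu(B_R)$ in your identification step well-defined.
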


To state the following result, the Vol'pert Theorem, we need a simple preliminary piece of notation.

\begin{definition}
Let $E\subseteq\R^N$ be a Borel set. We define the \emph{vertical section} of $E$ at any level $y\in\R^{N-1}$, and the \emph{horizontal section} of $E$ at any level $t \in\R$ as
\begin{align*}
E_y:=\{t\in \R:(y,t)\in E\}\,, && E^t:=\{y\in \R^{N-1}:(y,t) \in E\}\,.
\end{align*}
\end{definition}
\begin{theorem}[Vol'pert]\label{volpert}
Let $E$ be a set of locally finite perimeter. Then, for $\H^{N-1}$-a.e. $y\in \R^{N-1}$ the vertical section $E_y$ is a set of finite perimeter in $\R$, and $\partial^* (E_y) = \big(\partial^* E\big)_y$. Analogously, for $\H^1$-a.e. $t\in\R$ the horizontal section $E^t$ is a set of finite perimeter in $\R^{N-1}$, and $\partial^*(E^t)=(\partial^*E)^t$ up to an $\H^{N-2}$-negligible set.
\end{theorem}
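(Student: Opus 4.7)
The plan is to derive Vol'pert's theorem from the one-dimensional $BV$ slicing theory applied to $u:=\chi_E\in BV_{\rm loc}(\R^N)$, combined with the blow-up description (Theorem~\ref{blowupthm}) and the area/co-area formulae for the $(N-1)$-rectifiable set $\partial^* E$. For the \textbf{vertical sections}, the classical one-dimensional slicing theorem for $BV$ functions says that for any $u\in BV_{\rm loc}(\R^N)$ and $\H^{N-1}$-a.e.\ $y\in\R^{N-1}$, the trace $t\mapsto u(y,t)$ lies in $BV_{\rm loc}(\R)$ and the measure $D\bigl(u(y,\cdot)\bigr)$ is the disintegration along $\{y\}\times\R$ of the partial derivative $D_N u$. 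Applied to $u=\chi_E$, this gives at once that $E_y$ has locally finite perimeter in $\R$ for $\H^{N-1}$-a.e.\ $y$; using the representation $D_N\chi_E=(\nu_E)_N\,\H^{N-1}\res\partial^* E$, one also obtains the containment $\partial^* E_y\subseteq\bigl(\partial^* E\setminus B\bigr)_y$, where
\[
B:=\bigl\{x\in\partial^* E:\,(\nu_E(x))_N=0\bigr\}.
\]
Conversely, at any $x=(y,t)\in\partial^* E\setminus B$, Theorem~\ref{blowupthm} shows that $E$ blows up to a half-space transversal to the vertical line through $x$, which therefore meets the half-space in a half-line; this places $t$ in $\partial^* E_y$ and gives $\bigl(\partial^* E\setminus B\bigr)_y\subseteq\partial^* E_y$.

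The main obstacle is thus showing that the ``bad set'' $B$ meets $\H^{N-1}$-a.e.\ vertical fiber trivially, since slicing alone does not detect the part of $\partial^* E$ where the normal is horizontal. This transversality issue is resolved by the area formula for rectifiable sets applied to the vertical projection $\pi:\partial^* E\to\R^{N-1}$, $(y,t)\mapsto y$: its tangential Jacobian at $\H^{N-1}$-a.e.\ point of $\partial^* E$ equals $|(\nu_E)_N|$ (the modulus of the cosine of the angle between $\nu_E$ and $e_N$), so
\[
\int_{\R^{N-1}}\#\bigl(B\cap\pi^{-1}(y)\bigr)\,d\H^{N-1}(y)=\int_B|(\nu_E)_N|\,d\H^{N-1}=0,
\]
forcing $B\cap\pi^{-1}(y)=\emptyset$ for $\H^{N-1}$-a.e.\ $y$ and hence the sought equality $\partial^* E_y=(\partial^* E)_y$ for those $y$.

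For the \textbf{horizontal sections} $E^t$, which are $(N-1)$-dimensional, the argument is parallel but uses the co-area formula in place of the $BV$ slicing. Applying co-area to $\chi_E$ with respect to the height function $(y,t)\mapsto t$ one obtains, for $\H^1$-a.e.\ $t$, that $E^t$ has finite perimeter in $\R^{N-1}$; applying co-area on the rectifiable set $\partial^* E$ with the same height function gives that $(\partial^* E)^t$ is $(N-2)$-rectifiable with finite $\H^{N-2}$ measure. The blow-up argument again identifies $\partial^* E^t$ with $(\partial^* E)^t$ outside the new bad set $B':=\{x\in\partial^* E:\,\nu_E(x)=\pm e_N\}$, and the control of $B'$ comes from the co-area identity
\[
\int_\R\H^{N-2}\bigl(B'\cap\{x_N=t\}\bigr)\,dt=\int_{B'}\sqrt{1-(\nu_E)_N^2}\,d\H^{N-1}=0,
\]
since the tangential Jacobian of $x\mapsto x_N$ on $\partial^* E$ is $\sqrt{1-(\nu_E)_N^2}$ and vanishes on $B'$. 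This yields $\partial^* E^t=(\partial^* E)^t$ up to an $\H^{N-2}$-negligible set for $\H^1$-a.e.\ $t$, completing the proof.
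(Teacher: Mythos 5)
The paper gives no proof of this statement; it cites \cite{AFP,Volpert} for the vertical case and \cite{FMP,fusco,BCF} for the horizontal one, so your proposal stands on its own. Your blueprint --- 1D $BV$ slicing for finiteness of $P_\eu(E_y)$, the area formula on the rectifiable set $\partial^*E$ to kill the ``horizontal-normal'' set $B$ --- is the standard one, but the converse inclusion $(\partial^*E\setminus B)_y\subseteq\partial^*E_y$ obtained via blow-up is not rigorous. Theorem~\ref{blowupthm} gives $L^1_{\rm loc}(\R^N)$ convergence of $\tfrac1\eps(E-x)$ to a half-space $H$, and this does \emph{not} imply convergence of the single 1D fiber $\tfrac1\eps(E_y-t)$ to the corresponding half-line: $L^1$ convergence in $\R^N$ controls at most almost every fiber along a subsequence, and the exceptional null set of $y$'s may well contain the one fiber you are looking at, so ``this places $t$ in $\partial^*E_y$'' is a leap. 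Fortunately you already have the tools to close the gap without any blow-up: the $BV$ slicing identity $|D_N\chi_E|(\R^N)=\int\H^0(\partial^*E_y)\,d\H^{N-1}(y)$ together with your area-formula identity $|D_N\chi_E|(\R^N)=\int_{\partial^*E}|(\nu_E)_N|\,d\H^{N-1}=\int\H^0\bigl((\partial^*E)_y\bigr)\,d\H^{N-1}(y)$ shows that the two sides of the already-established inclusion $\partial^*E_y\subseteq(\partial^*E)_y$ (valid a.e.\ once $B_y=\emptyset$) have equal finite cardinality for a.e.\ $y$, which forces equality.

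There is a second weak point in the horizontal case: ``applying coarea to $\chi_E$ with respect to the height function'' does not yield that $E^t$ has finite perimeter in $\R^{N-1}$. The $BV$ coarea formula controls superlevel sets $\{u>s\}$, not slices, and the coarea formula on the rectifiable set $\partial^*E$ with the height map only controls $\H^{N-2}\bigl((\partial^*E)^t\bigr)$ --- neither gives $P_\eu(E^t)<\infty$. Finiteness of $P_\eu(E^t)$ for $\H^1$-a.e.\ $t$ is a genuine codimension-one slicing statement, to be obtained either by combining 1D $BV$ slicing in each of the $N-1$ horizontal coordinate directions with a Fubini argument, or by quoting the higher-codimension Vol'pert lemma proved in \cite{FMP,fusco,BCF}. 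Once that is secured, your use of coarea on $\partial^*E$ to kill $B'$ and to count $(\partial^*E)^t$ is correct and, together with the cardinality comparison as above, finishes the proof.
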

The proof of this result can be found in~\cite{AFP,Volpert} for the case of the vertical sections, while the analogous for the horizontal sections (or in general, for sections of any codimension) is proved in~\cite{FMP,fusco,BCF}.

\begin{theorem}[Coarea formula]\label{coarea}
Let $E$ be a set of locally finite perimeter and denote, for $x\in\partial^*E$, the outer normal of $E$ at $x$ as $\nu_E(x)=\big(\nu'(x),\nu_N(x)\big)\in \R^{N-1}\times\R$. For every Borel function $g:\R^N\to \R^+$ it is
\[
\int_{\partial^* E}g(x)\sqrt{1-|\nu_N(x)|^2}\,d\H^{N-1}(x)=\int_{-\infty}^{+\infty}\int_{\partial^* E^t}g(y,t)\,d\H^{N-2}(y)\,dt\,,
\]
and analogously
\[
\int_{\partial^* E}g(x)\sqrt{1-|\nu'(x)|^2}\,d\H^{N-1}(x)=\int_{\R^{N-1}}\int_{\partial^* E_y}g(y,t)\,d\H^0(t)\,d\H^{N-1}(y)\,.
\]
\end{theorem}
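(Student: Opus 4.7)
The plan is to deduce the statement from Federer's general coarea formula for Lipschitz maps defined on countably $\H^{N-1}$-rectifiable sets, applied to the linear projection $\pi:\R^N\to\R$ sending $(y,t)\mapsto t$, restricted to the reduced boundary $\partial^* E$. First I would recall that since $E$ has locally finite perimeter, $\partial^* E$ is countably $\H^{N-1}$-rectifiable, and at $\H^{N-1}$-a.e.\ point $x\in\partial^* E$ it admits an approximate tangent hyperplane equal to $T_x=\nu_E(x)^\perp$. For such $x$, the map $\pi$ restricted to $T_x$ is linear, so its tangential Jacobian is the length of the tangential gradient of $\pi$.

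The key computation is the value of this tangential Jacobian. Writing $\nu_E(x)=(\nu'(x),\nu_N(x))$, the Euclidean gradient of $\pi$ is $e_N$, and its orthogonal projection onto $T_x$ equals $e_N-(e_N\cdot\nu_E(x))\nu_E(x)=e_N-\nu_N(x)\nu_E(x)$; its norm is precisely
\[
\sqrt{1-2\nu_N(x)^2+\nu_N(x)^2}=\sqrt{1-\nu_N(x)^2}\,,
\]
so the coarea factor appearing on the left-hand side of the first identity is exactly the correct tangential Jacobian of $\pi|_{\partial^*E}$. Applying Federer's coarea formula then yields
\[
\int_{\partial^* E} g(x)\sqrt{1-\nu_N(x)^2}\,d\H^{N-1}(x)=\int_{-\infty}^{+\infty}\left(\int_{\pi^{-1}(t)\cap\partial^*E} g(y,t)\,d\H^{N-2}(y)\right)dt\,.
\]

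To conclude the first identity, I would identify the slice $\pi^{-1}(t)\cap\partial^* E$ with $\partial^*(E^t)$ for almost every $t$. This is exactly the content of the second (horizontal) part of Theorem~\ref{volpert}, which ensures that for $\H^1$-a.e.\ $t\in\R$ the section $E^t$ is of finite perimeter in $\R^{N-1}$ and $\partial^*(E^t)=(\partial^*E)^t$ modulo an $\H^{N-2}$-negligible set, hence the inner integrals coincide. The second identity is entirely analogous: I would project onto the hyperplane $\{x_N=0\}$ via $(y,t)\mapsto y$, note that the tangential Jacobian becomes $\sqrt{1-|\nu'(x)|^2}$ by the same linear-algebra computation, and invoke the vertical-section part of Theorem~\ref{volpert} to identify $\pi^{-1}(y)\cap\partial^* E$ with $\partial^*(E_y)$.

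The main obstacle, mild but genuine, is the rigorous identification of the fibers of $\pi$ with the reduced boundaries of the slices: a priori Federer's formula only gives the abstract slice $\pi^{-1}(t)\cap\partial^* E$, and one needs Vol'pert's theorem to see that this coincides $\H^{N-2}$-a.e.\ with $\partial^*(E^t)$ for $\H^1$-a.e.\ $t$. Everything else is a direct application of the rectifiable coarea formula once the tangential Jacobian is computed.
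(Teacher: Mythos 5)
The paper does not prove this statement: it appears under ``We conclude by recalling three classical results, that we will use extensively in the sequel,'' with no argument supplied (the references cited nearby concern Vol'pert's theorem, not the coarea identities). So there is no in-paper proof to compare against; the question is simply whether your deduction is correct, and it is. Applying Federer's coarea formula for Lipschitz maps on the countably $\H^{N-1}$-rectifiable set $\partial^* E$ to the coordinate projections, and then invoking Vol'pert's theorem to pass from the set-theoretic fibers $\pi^{-1}(t)\cap\partial^*E=(\partial^*E)^t$ and $(\pi')^{-1}(y)\cap\partial^*E=(\partial^*E)_y$ to the reduced boundaries $\partial^*(E^t)$ and $\partial^*(E_y)$ modulo $\H^{N-2}$- (resp.\ $\H^0$-) negligible sets for a.e.\ $t$ (resp.\ $y$), is exactly the right bridge, and there is no circularity since Vol'pert's theorem is proved by BV slicing independently of the rectifiable coarea formula. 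Your computation of the coarea factor for $x\mapsto x_N$ as the tangential gradient norm $\sqrt{1-\nu_N^2}$ is correct. The one small imprecision is the phrase ``by the same linear-algebra computation'' for the vertical-section identity: there the map $(y,t)\mapsto y$ has target dimension $N-1$, so the relevant quantity is not a gradient norm but the $(N-1)$-dimensional Jacobian determinant of $d\pi'|_{T_x}:T_x\to\R^{N-1}$. That determinant does equal $|\nu_N|=\sqrt{1-|\nu'(x)|^2}$ (e.g.\ by parametrizing $T_x$ as a graph over $\R^{N-1}$, or by Cauchy--Binet), and both computations are instances of the general coarea factor $C_k(d\pi|_{T_x})$ in Federer's formula, but they are not literally the same calculation; it would be worth spelling out the determinant step. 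With that clarified, the proof is complete.
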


\section{Proof of Theorem~\mref{boundedness}\label{bounded}}

This section is entirely devoted to give a proof of Theorem~\mref{boundedness}.

\begin{proof}[Proof (of Theorem~\mref{boundedness})]
Let $E$ be an isoperimetric set. Since $f$ is essentially bounded, we can take some $R'>0$ and $M>1$ such that $\frac 1M \leq f \leq M$ outside $B_{R'}$. Let us now use the assumption that $E$ fulfills the $\eps-\eps^\beta$ property, finding constants $C,\, R'',\,\bar\eps>0$ with $R''\geq R'$ such that, for every $0<\eps<\bar \eps$, there exists a set $F$ with $F=E$ outside the ball $B_{R''}$, and
\begin{align}\label{tkxepseps}
|F|_f = |E|_f + \eps\,, && P_f(F) \leq P_f(E) + C \eps^\beta\,;
\end{align}
in addition, we are also allowed to assume
\begin{align}\label{onlyforb=1/2}
C \leq \frac{2N\omega_N^{1/N}}{M^2} && \hbox{if } \beta = \frac{N-1}N \,.
\end{align}
Let us now define, for every $R> R''$,
\begin{align*}
\eps(R) := \big| E \setminus B_R\big|_f\,, && 
g(R) := \H^{N-1}_f\big( E \cap \partial B_R\big) = \int_{E \cap \partial B_R} f(x) \, d\H^{N-1}(x)\,;
\end{align*}
the function $R\mapsto \eps(R)$ is decreasing and goes to $0$ as $R$ goes to $\infty$. Moreover, observe that
\[
\eps(R) = \int_R^\infty \int_{\partial B_r} \chi_E(x) f(x) \, d\H^{N-1}(x) \, dr 
=\int_R^\infty \H^{N-1}_f\big( E \cap \partial B_R\big)\,dr= \int_R^\infty g(r)dr\,,
\]
hence $\eps\in {\rm W}^{1,1}_{\rm loc}(R'',+\infty)$ and $\eps'(R) = -g(R)$. Pick now any $R > R''$: we can consider a competitor $\widetilde E$ by cutting away the part of $E$ which is outside the ball $B_R$, and then using the $\eps-\eps^\beta$ property to recover the correct volume without increasing too much the perimeter. More precisely, first of all we notice that
\begin{equation}\label{starthere}
P_f \big( E \cap B_R\big) = P_f(E) - P_f\big(E\setminus B_R\big) + 2 \H^{N-1}_f \big( E \cap \partial B_R \big)
= P_f(E) - P_f\big(E\setminus B_R\big) + 2 g(R)\,.
\end{equation}
Then, we apply the standard Euclidean isoperimetric inequality and the bounds on $f$ to get
\[
P_f\big( E\setminus B_R\big) \geq \frac 1M\, P_\eu \big( E \setminus B_R\big) 
\geq \frac 1M\, N \omega_N^{1/N} \big| E \setminus B_R\big|_\eu^{\frac{N-1}N}
\geq \frac 1{M^{\frac{2N-1}N}}\, N \omega_N^{1/N} \eps(R)^{\frac{N-1}N}\,,
\]
which inserted into~(\ref{starthere}) gives
\begin{equation}\label{continue}
P_f \big( E \cap B_R\big) \leq P_f(E) - \frac 1{M^{\frac{2N-1}N}}\, N \omega_N^{1/N} \eps(R)^{\frac{N-1}N} + 2 g(R)\,.
\end{equation}
We apply now~(\ref{tkxepseps}) with $\eps=\eps(R)$ to find a set $F\subseteq \R^N$ with $|F|_f=|E|_f+\eps$ and $P_f(F)\leq P_f(E) + C\eps^\beta$. Since $F$ coincides with $E$ outside $B_{R''}$ and $R >R''$, we can finally define the competitor $\widetilde E$ as $F\cap B_R$. By construction we find
\[
\big|\widetilde E\big|_f = \big| E \cap B_R\big|_f + \eps(R) = |E|_f\,,
\]
while by~(\ref{continue}) it is also
\[
P_f\big(\widetilde E\big) = P_f\big( E \cap B_R \big) + P_f(F) - P_f(E)
\leq P_f(E) - \frac {N \omega_N^{1/N}}{M^{\frac{2N-1}N}}\, \eps(R)^{\frac{N-1}N} + 2 g(R)+ C \eps(R)^\beta\,.
\]
Since $E$ is an isoperimetric set and $\widetilde E$ has the same volume as $E$, we derive that for every $R>R''$ it must be $P_f(\widetilde E)\geq P_f(E)$, which implies
\begin{equation}\label{prebasic}
g(R) \geq \frac {N \omega_N^{1/N}}{2M^{\frac{2N-1}N}}\, \eps(R)^{\frac{N-1}N} - \frac C2 \,\eps(R)^\beta\,.
\end{equation}
We claim then what follows: there exist two positive constants $\gamma$ and $\hat\eps\leq \bar\eps$ such that
\begin{align}\label{basic}
g(R) \geq \gamma \eps(R)^{\frac{N-1}N} && \forall\, R>R'':\, \eps(R) \leq \hat\eps\,.
\end{align}
It is immediate to prove the validity of this estimate by considering separately the case $\beta=\frac{N-1}N$ and the case $\beta>\frac{N-1}N$. Indeed, in the first case~(\ref{basic}) is just an immediate consequence of~(\ref{prebasic}) and of the choice~(\ref{onlyforb=1/2}). On the other hand, if $\beta>\frac{N-1}N$, then $\eps^{\frac{N-1}N}\gg \eps^\beta$ when $\eps$ is small enough, then~(\ref{basic}) readily follows by~(\ref{prebasic}). Since $R\mapsto \eps(R)$ is a continuous and decreasing function, we can select $R'''\geq R''$ such that $\eps(R)\leq \hat \eps$ for every $R\geq R'''$.\par
Let us now directly show the boundedness of $E$: if $E\subseteq B_{R'''}$, then there is nothing to prove; otherwise, let $j\in \N$ be such that $\eps(R''')\geq 2^{-j}$, and for every $i\geq j$ let $R(i)$ be such that $\eps(R_i)= 2^{-i}$. Then, recalling~(\ref{basic}) and the fact that $\eps'=-g$, for every $i\geq j$ we can evaluate
\[\begin{split}
\frac 1{2^{i+1}} &= \frac 1{2^i} - \frac 1{2^{i+1}} 
= \eps\big(R_i\big) - \eps\big(R_{i+1}\big)
= \int_{R_i}^{R_{i+1}} g(R)\, dR
\geq \int_{R_i}^{R_{i+1}} \gamma \eps(R)^{\frac{N-1}N} \, dR
\geq \gamma \, \frac{R_{i+1}-R_i}{2^{(i+1)\,\frac{N-1}N}}\,,
\end{split}\]
from which we deduce that
\[
R_{i+1} - R_i \leq \frac{1}{\gamma}\, 2^{-\frac{i+1}N} \,.
\]
This immediately implies that, for every $\ell\geq j$,
\[
R_\ell \leq R_j + \frac 1 \gamma\, \sum_{i=j}^{\ell-1} \frac{1}{\big(2^{-\frac 1 N}\big)^{i+1}}
\leq R_j + \frac 1 \gamma\, \sum_{i=j}^{\infty} \frac{1}{\big(2^{-\frac 1 N}\big)^{i+1}} =: R_\infty < +\infty\,,
\]
and in turn this implies that $\eps(R)=0$ for every $R\geq R_\infty$, that is, $E\subseteq B_{R_\infty}$ is bounded.
\end{proof}

\section{Proof of Theorem~\mref{eps-epsbeta}\label{holder}}

In this section we give the proof of our main result, Theorem~\mref{eps-epsbeta}. Since the proof is quite involved, we have divided it for simplicity in three parts and several steps.

\begin{proof}[Proof (of Theorem~\mref{eps-epsbeta})]
Let us consider a function $f$ as in the claim, and an isoperimetric set $E$. In the first part, we will show that the $\eps-\eps^\alpha$ property holds. Since $\alpha<\beta$ unless $\beta=\alpha=1$, the property is in fact weaker than what we need; nevertheless, we prefer to start with this somehow easier case, because the proof of the stronger $\eps-\eps^\beta$ property, which will be done in the second part, will be a careful modification of the same argument. And in turn, also the case when $f$ is only essentially continuous will eventually be treated, in the third part, with the same strategy. By Definition~\ref{essbdd}, there exists an open set $U\subseteq\R^N$ with $U\cap \partial^*E\neq \emptyset$ and such that, for a suitable $M>1$, one has 
\begin{align*}
\frac 1M < f(x) < M \quad \forall\, x\in U\,, &&
|f(x)-f(y)|\leq M |x-y|^\alpha\quad \forall\, x\,,\, y \in U\,.
\end{align*}
Let $\bar x$ be a point in $ U\cap \partial^* E$. We can assume for simplicity that $\bar x$ coincides with the origin in $\R^N$, and that the outer normal of $E$ at $\bar x$ is the vertical direction $\nu(\bar x)=(0,1)\in\R^{N-1}\times \R$.

\part{I}{The $\eps-\eps^\alpha$ property.}
We start considering the case when $f$ is essentially $\alpha$-H\"older, and prove the $\eps-\eps^\alpha$ property; the proof is divided in many steps.

\step{(i)}{Choice of the cube $Q^N$.}
In this first step, we will select a suitably small constant $a$, and from now on we will restrict our attention to the cube $Q^N=(-a/2,a/2)^N$, which is entirely contained inside $U$ as soon as $a\ll 1$. Let us denote by $Q=(-a/2,a/2)^{N-1}$ the horizontal cube, and by $\varphi:\R^{N}\rightarrow \R^N$ the constant vector field $\varphi \equiv (0,1) \in \R^{N-1}\times \R$; let moreover $\rho>0$ be a sufficiently small constant, that will be precised later. We aim to choose $a>0$ such that $Q^N$ is contained in the open set $U$ defined above, and moreover all the following properties hold:
\begin{gather}
1-\rho \leq \frac{\H^{N-1}\big(\partial^*E \cap Q^N\cap \{-a\rho<x_N<a\rho\}\big)}{a^{N-1}} \leq 1+\rho\,,\label{P2}\\
\frac{\H^{N-1}\big(\partial^*E \cap Q^N\setminus\{-a\rho<x_N<a\rho\}\big)}{a^{N-1}} \leq \rho\,,\label{P3}\\
\frac{\H^N\big(E \cap Q^N\cap\{x_N<0\}\big)}{a^N/2} \geq 1-\rho\,,\label{V1}\\
\frac{\H^N\big(E \cap Q^N\cap\{x_N>0\}\big)}{a^N/2} \leq \rho\,,\label{V2}\\
\int_{Q^N} \varphi \cdot d\mu_E \geq (1-\rho)a^{N-1}\,, \label{vect}\\
\H^{N-2}\Big(\partial^*E\cap\big(\partial Q\times (-a/2,a/2)\big)\Big) \leq 2^{N+1} a^{N-2}\,.\label{bordo}
\end{gather}
We show now that such a choice of $a$ is possible. Indeed, the first five conditions~\eqref{P2}--\eqref{vect} are true for \emph{every} $a$ small enough, say $a\leq \bar a$, as a direct consequence of the blow-up Theorem~\ref{blowupthm}. It remains then only to show that there exists \emph{some} $a\leq \bar a$ satisfying also condition~(\ref{bordo}). To do so observe that, also by~(\ref{P2}) and~(\ref{P3}),
\[\begin{split}
\int_{\bar a/4}^{\bar a/2}\H^{N-2}\Big(\partial^*E \cap \big(\partial (-t,t)^{N-1}\times (-\bar a/2,\bar a/2)\big)\Big)dt
&\leq \H^{N-1}\Big(\partial^*E\cap \big(-\bar a/2, \bar a/2\big)^N\Big)\\
&\leq \big( 1+2\rho\big) \bar a^{N-1}\,.
\end{split}\]
Therefore, there exists $a\in (\bar a/2,\bar a)$ for which
\[\begin{split}
\H^{N-2}\Big(\partial^*E&\cap\big(\partial Q\times (-a/2,a/2)\big)\Big)\leq
\H^{N-2}\Big(\partial^*E \cap \big(\partial (-a/2,a/2)^{N-1}\times (-\bar a/2,\bar a/2)\big)\Big)\\
&\leq 4\,\frac{\big( 1+2\rho\big) \bar a^{N-1}}{\bar a}\leq 5 \bar a^{N-2}\leq 2^{N+1} a^{N-2}\,.
\end{split}\]
Notice that, thanks to Vol'pert Theorem, without loss of generality we can assume
\begin{equation}\label{noticethat}
\partial^*E \cap\big(\partial Q\times (-a/2,a/2)\big)=\partial^*\Big(E\cap\big(\partial Q\times (-a/2,a/2)\big)\Big)\qquad \H^{N-2}-a.e.\,.
\end{equation}
We have then found some $a\leq \bar a$ for which also~(\ref{bordo}) holds true. This concludes the first step.

\step{(ii)}{Definition of $A$, $B$, $G$ and $\Gamma$.}
In this step, we subdivide the horizontal cube $Q$ into four sets $A$, $B$, $G$ and $\Gamma$, depending on the properties of $\partial^*(E\cap Q^N)_{x'}$. Since in the whole proof we are concentrated only on what happens inside $Q^N$, we will always consider the horizontal and vertical sections inside the cube, even without specifying it; in other words, we will write $E_y$ or $E^t$ (respectively $\partial^*E_y$ or $\partial^*E^t$) instead of $(E \cap Q^N)_y$ or $(E\cap Q^N)^t$ (respectively $\partial^*(E \cap Q^N)_y$ or $\partial^*(E\cap Q^N)^t$). This is a slight abuse of notation, but it will simplify a lot the formulas in the rest of the proof. The sets are the following
\begin{align*}
A &:=\{ x' \in Q: \partial^*(E_{x'})\neq (\partial^* E)_{x'} \}\,,\\
B &:=\{x'\in Q\setminus A: \H^0(\partial^* E_{x'})=0\}\,,\\
G& :=\{x'\in Q \setminus A: \H^0(\partial^* E_{x'})=1,\,\partial^* E_{x'}\subseteq (-a\rho,a\rho),\,E_{x'} \subseteq (-a/2, a\rho) \}\,,\\
\Gamma&:=Q\setminus(A\cup B\cup G)\,.
\end{align*}
Let us briefly discuss the meaning of these sets: thanks to Step~(i), we can imagine $E\cap Q^N$ to be close to the half-cube $Q^N\cap \{x_N<0\}$, thus we expect the vertical sections $E_{x'}$ to be close to $(-a/2,0)$. The ``good'' set $G$ is precisely the set of those $x'\in Q$ for which this holds, namely, $E_{x'}$ is a ``lower'' segment starting at $-a/2$ and ending between $-a\rho$ and $a \rho$. All the other $x'\in Q$ are then contained in the ``bad'' sets $A$, $B$ and $\Gamma$. More precisely, $A$ collects those $x'$ for which Vol'pert Theorem does not hold true (keep in mind that we know by Theorem~\ref{volpert} that $A$ is $\H^{N-1}$ negligible, but this does not imply that the sections corresponding to $A$ do not carry perimeter!). Instead, $B$ is the set of the sections which have no boundary, thus are either the full segment $(-a/2,a/2)$, or they are empty. Finally, $\Gamma$ is the set of the sections having some boundary, but not contained in $G$. Observe that this can happen for several different reasons: if $\partial^* E_{x'}$ contains exactly one point, then either this point is not between $-a\rho$ and $a\rho$, or $E_{x'}$ is an ``upper'' segment starting between $-a\rho$ and $a\rho$, and ending at $a/2$. On the other hand, if $\partial^* E_{x'}$ has more than one point, then the points can be finitely many or infinitely many. We further subdivide $\Gamma$ in four subsets according to the above possibilities, namely, we define
\begin{align*}
\Gamma_0 &:=\big\{x' \in \Gamma: \H^0(\partial^*E)_{x'}=1, \partial^*E_{x'}\notin (-a\rho, a\rho)\big\}\,,\\
\Gamma_1 &:=\big\{x' \in \Gamma \setminus \Gamma_0: \H^0(\partial^*E)_{x'}=1\big\}\,,\\
\Gamma_2 &:=\big\{x' \in \Gamma\setminus (\Gamma_0 \cup \Gamma_1):\partial^*E_{x'}\,\mbox{contains a finite number of points} \big\}\,,\\
\Gamma_3 &:=\Gamma\setminus \big(\Gamma_0 \cup \Gamma_1\cup \Gamma_2\big)\,.
\end{align*}
The aim of this step is to show that $G$ fills a big portion of $Q$, and that the perimeter of $E$ in the sections not belonging to $G$ is extremely small. Let us start by observing that, thanks to~\eqref{vect}, one has
\begin{equation}\label{gamma}\begin{split}
(1-\rho)a^{N-1}\leq \int_{Q^N} \varphi\cdot d\mu_E
= &\int_{A\times (-a/2,a/2)} \varphi\cdot d\mu_E+\int_{B\times (-a/2,a/2)} \varphi\cdot d\mu_E\\
&\hspace{20pt}+ \int_{G\times (-a/2,a/2)}\varphi\cdot d\mu_E+\int_{\Gamma\times (-a/2,a/2)}\varphi\cdot d\mu_E\,.
\end{split}\end{equation}
We have now to estimate each of the terms in the right-hand side of last inequality. First, since by construction $d\mu_E=0$ on the set $B\times (-a/2,a/2)$, we have
\begin{equation}\label{est:B}
\int_{B\times (-a/2,a/2)} \varphi\cdot d\mu_E=0\,.
\end{equation}
We address now the integral on $\Gamma_0 \times (-a/2,a/2)$. Recall that, as already observed, if $x'\in\Gamma_0$ and $(x', x_N)\in \partial^*E$, then $x_N \notin (-a\rho, a\rho)$. Therefore, using~\eqref{P3}, we get 
\begin{equation}\label{gamma-1}\begin{split}
\int_{\Gamma_0\times(-a/2,a/2)}\varphi\cdot d\mu_E&
\leq \int_{\Gamma_0\times(-a/2,a/2)} |\varphi| \, d | \mu_E|
= \H^{N-1}(\partial^*E\cap\{(x',x_N):x'\in \Gamma_0\})\\
&\leq \H^{N-1}(\partial^*E\cap Q^N\setminus \{-a\rho<x_N<a\rho\})\leq \rho a^{N-1}\,.
\end{split}\end{equation}
Concerning $A$, we just recall that by Vol'pert Theorem~\ref{volpert} it is
\begin{equation}\label{est:A}
\H^{N-1}(A)=0\,.
\end{equation}
Let us pass now to $\Gamma_3$; as already observed, for every $x'\in \Gamma_3$ the set $\partial^*(E_{x'})= (\partial^*E)_{x'}$ contains infinitely many points. Then, since for any $K\geq 1$ it is clearly
\[
\H^{N-1}\Big(\big\{x'\in Q\setminus A:\,\H^0(\partial^*E_{x'})\geq K\big\}\Big)\leq\frac{1}{K} \,\H^{N-1}(\partial^*E\cap Q^N)\,,
\]
by sending $K\rightarrow \infty$ we derive
\begin{equation}\label{gamma-2c}
\H^{N-1}(\Gamma_3)=0\,.
\end{equation}
Thanks to~\eqref{est:A} and~\eqref{gamma-2c}, the coarea formula (Theorem~\ref{coarea}) directly gives
\begin{equation}\label{gamma-2-final}\begin{split}
\int_{(\Gamma_3 \cup A)\times (-a/2,a/2)}\varphi\cdot d\mu_E&= \int_{Q^N\cap\partial^*E \cap \{x'\in \Gamma_3\cup A\}} (0,1)\cdot \nu_E\, d\H^{N-1} \\
&=\int_{Q^N\cap \partial^*E \cap \{x'\in \Gamma_3 \cup A\}}\sqrt{1-|\nu'_E(x)|^2}\,d\H^{N-1}(x)\\
&= \int_{Q^N\cap\Gamma_3 \cup A} \left(\int_{\partial^*E_{x'}}1 \,d\H^0(x_N)\right)\, d\H^{N-1}(x')=0\,.
\end{split}\end{equation}
We address now $\Gamma_1$. Recall that, by definition, if $x' \in \Gamma_1$ then $\partial^*E_{x'}=\{p\}$ with $p=p(x) \in (-a\rho, a\rho)$. Call then $\widetilde \Gamma_1$ the set of those $x'\in \Gamma_1$ for which $|\nu_E'(x',p)|<1$, that is, the levels $x'$ such that the outer normal at $(x',p)$ is \emph{not} horizontal. We remark the well known fact that $\H^{N-1}\big(\Gamma_1\setminus \widetilde\Gamma_1\big)=0$. Using again the coarea formula, denoting by $\delta_p$ the Dirac mass at $p \in \R$ we find
\[\begin{split}
|\mu_E| \res \big(\widetilde\Gamma_1\times (-a/2,a/2)\big)
&=\frac{1}{\sqrt{1-|\nu'_E(x',p)|^2}}\,\delta_p\otimes \H^{N-1} \res \widetilde\Gamma_1\\
&= \frac{1}{|\nu_E(x',p)\cdot (0,1)|}\,\delta_p\otimes \H^{N-1} \res \widetilde\Gamma_1\,.
\end{split}\]
Hence we have
\begin{equation}\label{gamma-0}\begin{split}
\int_{ \Gamma_1\times (-a/2,a/2)} \varphi \cdot d\mu_E 
&=\int_{Q} (0,1)\cdot \nu_E(x)\,d|\mu_E|\res \big(\Gamma_1\times(-a/2,a/2)\big)\\
&=\int_{Q} (0,1)\cdot \nu_E(x)\,d|\mu_E|\res \big(\widetilde\Gamma_1\times(-a/2,a/2)\big)\\
&=\int_{ \widetilde\Gamma_1} \frac{(0,1)\cdot \nu_E(x',p)}{| (0,1)\cdot \nu_E(x',p)|}\, d\H^{N-1}(x')
=-\H^{N-1}(\widetilde\Gamma_1)=-\H^{N-1}(\Gamma_1)\,.\hspace{-15pt}
\end{split}\end{equation}
Note that the ``$-$'' sign comes from that fact that $\nu_E(x',p)$ has clearly a negative last component for every $x'\in \widetilde \Gamma_1$.\par
The very same argument used for $\Gamma_1$ can be repeated for $G$, recalling that for every $x' \in G$ one has that $\partial^*E_{x'}=\{q\}$ with some $q=q(x)\in (-a\rho, a\rho)$. Therefore, since $\nu_E(x',q)$ has a positive last component, in place of~\eqref{gamma-0} we find now
\begin{equation}\label{HG}
\int_{G\times (-a/2,a/2)}\varphi \cdot d\mu_E = \H^{N-1}(G)\,.
\end{equation}
Finally, we address $\Gamma_2$. First of all, recall that $\H^0(\partial^*E_{x'})\geq 2$ for almost every $x \in \Gamma_2$. Thus
\begin{equation}\label{gamma2}
\H^{N-1}(\partial^*E \cap \{(x',x_N):x' \in \Gamma_2\})\geq \int_{\Gamma_2} \H^0(\partial^*E_{x'})\,d\H^{N-1}(x')\geq 2 \H^{N-1}(\Gamma_2)\,.
\end{equation}
Moreover, by definition, for every $x'\in \Gamma_2$ there exist $k\in \N$ and $p_i\in (-a/2,a/2)$, for $i=1,...,k$, such that $\partial^*E_{x'}=\bigcup_{i=1}^k p_i$. Let us call again $\widetilde \Gamma_2$ the set of those $x'$ for which all the corresponding $p_i$ have a non-horizontal outer normal, where again $\H^{N-1}\big(\Gamma_2\setminus\widetilde\Gamma_2\big)=0$. Using the coarea formula exactly as we did for $\Gamma_1$ and $G$, we have that 
\[
|\mu_E| \res(\widetilde\Gamma_2\times (-a/2,a/2))=\alpha_{x'} \otimes \H^{N-1} \res \widetilde\Gamma_2\,,
\]
where
\[
\alpha_{x'}=\sum_{i=1}^k \frac{1}{|\nu_E(x',p_i)\cdot (0,1)|}\,\delta_{p_i}\,.
\]
Therefore we have, analogously as in~\eqref{gamma-0} or~\eqref{HG},
\begin{equation}\label{integrand}
\int_{ \Gamma_2\times (-a/2,a/2)} \varphi \cdot d\mu_E 
=\int_{ \widetilde\Gamma_2} \sum_{i=1}^k \frac{(0,1)\cdot \nu_E(x',p_i)}{| (0,1)\cdot \nu_E(x',p_i)|} \,d\H^{N-1}(x') \,.
\end{equation}
Recall that, in the last expression, the integer $k$ and the points $p_i$ depend on $x'$. Observe now that, in the right hand side of last equation, the integrand is always a finite sum of $\pm 1$; let us discuss carefully the signs. For $x' \in \widetilde\Gamma_2$, we have that $E_{x'}=\bigcup_{j=1}^h (b_j,c_j)$ is a finite union of segments. Moreover, by construction $\{p_i\}_{i=1,...,k}=\{b_j\}_{j=1,...,h}\cup \{c_j\}_{j=1,...,h}\setminus \{-a/2,a/2\}$, since we are working within the open cube $Q^N$. In addition, one has that $\displaystyle \frac{(0,1)\cdot \nu_E(x',b_j)}{|(0,1)\cdot \nu_E(x',b_j)|}=-1$ for every $j=1,...,h$ such that $b_j\neq -a/2$, since the normal vector at any point $(x',b_j)$ has a negative last component; similarly, we have $\displaystyle \frac{(0,1)\cdot \nu_E(x',c_j)}{|(0,1)\cdot \nu_E(x',c_j)|}=1$ for every $j=1,...,h$ such that $c_j\neq a/2$. In conclusion, for every $x' \in \Gamma_2$ the value of the integrand in~\eqref{integrand} is either $-1$, or $0$, or $1$, depending whether or not $b_1=-a/2$ or $c_h=a/2$. Therefore we have, also recalling~\eqref{gamma2},
\begin{equation}\label{est-gamma3}
\int_{ \Gamma_2\times (-a/2,a/2)}\varphi \cdot d\mu_E
\leq \H^{N-1}(\Gamma_2)
\leq \frac{\H^{N-1}(\partial^*E\cap \{(x',x_N):x' \in \Gamma_2\})}{2}\,.
\end{equation}
Combining~\eqref{gamma}, \eqref{est:B}, \eqref{gamma-1}, \eqref{gamma-2-final}, \eqref{gamma-0}, \eqref{HG}, and~\eqref{est-gamma3}, we have
\begin{equation}\label{upper}\begin{split}
\H^{N-1}&\big(\partial^*E \cap \{(x',x_N)|x'\in G\}\big)+ \frac{1}{2}\, \H^{N-1}\big(\partial^*E\cap\{(x',x_N):x'\in \Gamma_2\}\big) \\
&\geq \H^{N-1}(G) + \frac{1}{2}\, \H^{N-1}\big(\partial^*E\cap\{(x',x_N):x'\in \Gamma_2\}\big) \\
&=\int_{G\times (-a/2,a/2)} \varphi \cdot d\mu_E +\frac{1}{2}\, \H^{N-1}(\partial^*E\cap\{(x',x_N):x'\in \Gamma_2\}) \geq (1-2\rho)a^{N-1}\,.
\end{split}\end{equation}
On the other hand, using~\eqref{P2} and~\eqref{P3} of we get the upper bound
\begin{equation}\label{lower}\begin{split}
\H^{N-1}(\partial^*E \cap \{(x',x_N):x'\in G\}) + \H^{N-1}(&\partial^*E \cap\{(x',x_N):x'\in \Gamma_2\})\\
&\leq \H^{N-1}(\partial^*E \cap Q^N)\leq (1+2\rho)a^{N-1}\,.
\end{split}\end{equation}
Combining together~\eqref{upper} and~\eqref{lower}, we deduce
\begin{equation}\label{gamma-1a}
\H^{N-1}(\partial^*E\cap\{(x',x_N):x'\in \Gamma_2\}) \leq 8\rho a^{N-1}\,.
\end{equation}
This, together with~\eqref{upper} again, implies that
\begin{equation}\label{est:G}
\H^{N-1}(\partial^*E\cap\{(x',x_N):x' \in G\})\geq \H^{N-1}(G) \geq (1-6\rho)a^{N-1}\,.
\end{equation}
Finally, this last estimate implies on one hand, since $A\cup G\cup B\cup \Gamma=Q$, that
\begin{equation}\label{B-gamma}
\H^{N-1}(A)+\H^{N-1}(B)+\H^{N-1}(\Gamma)\leq 6\rho a^{N-1}\,,
\end{equation}
and on the other hand, recalling~\eqref{P2} and~\eqref{P3}, that
\begin{equation}\label{est-gamma}
\H^{N-1}\Big(\partial^*E \cap\big(A \cup B\cup\Gamma \big)\times (-a/2,a/2)\Big)\leq 8 \rho a^{N-1}\,.
\end{equation}

\step{(iii)}{Definition of $\sigma^-$, $\sigma^+$ and $F$.}
\noindent\textbf{Definition of $\sigma^+$}. 
Let $\bar\delta\ll \rho a$ be a fixed number; we can take $\displaystyle H:=\left[\frac{a(1/2-3\rho)}{\bar\delta}\right]$ disjoint horizontal strips $S_i:=Q\times(\sigma_i, \sigma_i + \bar\delta)\subseteq Q^N$ with $a\rho< \sigma_i< a/2 - 2\rho$ for every $1\leq i\leq H$.
By assumptions~\eqref{P3} and~\eqref{V2}, we have
\[
\begin{split}
\sum_{i=1}^H a\H^{N-1}&\big(\partial^*E\cap S_i\big)+\H^{N}\big(E\cap S_i\big) \\
&\leq a \H^{N-1}\Big(\partial^*E\cap \big( Q\times [a\rho,a/2)\big)\Big)+ \H^{N}\Big(E\cap\big( Q\times [a\rho,a/2)\big)\Big) \leq 2\rho a^{N}\,.
\end{split}
\]
Therefore there exists $\overline i \in \{1,...,H\}$ such that
\begin{equation}\label{PV+}
a\H^{N-1}(\partial^*E\cap S_{\overline i})+\H^N(E\cap S_{\overline i})\leq \frac{2\rho a^N}{H}\leq 6 \bar\delta \rho a^{N-1}\,,
\end{equation}
recalling that by definition $\displaystyle H\geq \frac{a}{3\bar\delta}\,.$ We set $\sigma^+:=\sigma_{\overline i}$, for such an $\overline i$.

\noindent\textbf{Definition of $\sigma^-$}. 
We now select a horizontal level $\sigma^-\in (-a/2, -a\rho)$ such that $\partial^*(E^{\sigma^-})=(\partial^*E)^{\sigma^-}$ in the $\H^{N-2}$ sense, and
\begin{equation}\label{P-}
\frac{\H^{N-2}\big(\partial^*E^{\sigma^-}\big)}{a^{N-2}}\leq 3\rho\,.
\end{equation}
To show that this is possible, we apply~\eqref{P3} and Vol'pert Theorem~\ref{volpert} to obtain that
\[\begin{split}
\rho &\geq \frac{\H^{N-1}\big(\partial^* E\cap Q^N \cap \{x_N< -a\rho\}\big)}{a^{N-1}}
\geq \int_{-a/2}^{-a\rho} \frac{\H^{N-2}(\partial^*E^t)}{a^{N-1}}\,dt\\
&=\bigg(\frac 12 -\rho \bigg)\intmed_{-a/2}^{-a\rho} \frac{\H^{N-2}(\partial^*E^t)}{a^{N-2}}\,dt
\geq \frac 13\intmed_{-a/2}^{-a\rho} \frac{\H^{N-2}(\partial^*E^t )}{a^{N-2}}\,dt\,,
\end{split}\]
from which the existence of some $\sigma^-$ satisfying~\eqref{P-} immediately follows.

\noindent\textbf{Definition of $F$}. We want now to construct the competitor $F$. To do so, we take a constant $\bar\delta/(4M^2)\leq \delta\leq \bar\delta$, and we define the set $F=F(\delta)$ as
\[
x\in F \Longleftrightarrow \left\{\begin{array}{ll}
x\in E \setminus Q^N \,,\\
x \in E\cap Q^N\cap \big(\{x_N\leq \sigma^-\} \cup \{x_N> \sigma^+ + \delta\big)\,,\\
(x',\sigma^-)\in E\cap Q^N\ \mbox{and}\ \sigma^-< x_N\leq \sigma^-+\delta\,, \\
(x',x_N-\delta)\in E\cap Q^N\ \mbox{and}\ \sigma^-+\delta<x_N \leq \sigma^+ +\delta\,.
\end{array}\right.
\]
In words, we eliminate the intersection of $E$ with the strip $Q\times (\sigma^+,\sigma^++\delta) \subseteq S_{\bar i}$, and we move up of a distance $\delta$ all the part of $E$ between the levels $\sigma^-$ and $\sigma^+$. Notice that by definition $\sigma^+ + \delta < a/2$, so nothing changes outside of $Q^N$.

\step{(iv)}{Evaluation of the volume and perimeter of $F$.}
We are now ready to evaluate the volume and the perimeter of the set $F$, in order to obtain the $\eps-\eps^\alpha$ property. 

\noindent\textbf{Volume}. By the definition of $F$, it is easy to expect that its volume should equal that of $E$ plus something similar to $a^{N-1}\delta$, since we are moving up of a distance $\delta$ a set within the cube of $(N-1)$-dimensional volume equal to $a^{N-1}$. This is exactly what we are going to prove, but some care is required since, in passing from $E$ to $F$, we could also lose some volume, basically for two reasons. First, because we are eliminating the strip $Q\times (\sigma^+,\sigma^++\delta) \subseteq S_{\bar i}$ (with which $E$ has a small intersection, though). Second, because the density is not constant, and then there is in principle the risk of moving the mass where the density is lower.

Let us define the sets $E^+:=F \setminus E$, $E_1^-:=E \setminus F \cap \big(Q \times (\sigma^-, \sigma^+)\big)$, and $E_2^-:=E\setminus F \cap \big(Q \times (\sigma^+, \sigma^+ + \delta)\big)$, so that $F=E \cup E^+ \setminus (E_1^- \cup E_2^-)$. By construction we have $E^+ \cap E = \emptyset$, 
$E_1^- \cap E_2^- = \emptyset$, and 
$E_1^- \cup E_2^- \subseteq E$, thus
\begin{equation}\label{est:Q}
|F|_f-|E|_f=|E^+|_f-|E^-_1|_f-|E_2^-|_f\,.
\end{equation}
Let us estimate the terms on the right-hand side of this equality, starting with $|E_2^-|$. Using that $f\leq M$ in $Q^N$ and~\eqref{PV+}, and recalling that $\bar\delta/(4M^2)\leq \delta\leq \bar\delta$, we have that
\begin{equation}\label{E2-}\begin{split}
\big|E^-_2\big|_f&\leq 
\Big|E\cap \big(Q\times (\sigma^+,\sigma^+ +\delta\big)\Big|_f
\leq\big|E\cap S_{\bar i}\big|_f
\leq M \H^N\big(E\cap S_{\bar i}\big)
\leq 6 \rho M a^{N-1} \bar\delta\\
&\leq 24 \rho M^3a^{N-1} \delta\,.
\end{split}\end{equation}
We pass now to $E^+$. Observe that $E^+ = \bigcup_{x' \in Q} E^+_{x'} \supseteq \bigcup_{x' \in G} E^+_{x'}$ and that, if $x' \in G$, then $E^+_{x'}$ is a segment of length $\delta$. Therefore, using~\eqref{est:G} and recalling that $f\geq 1/M$ on $Q^N$, we deduce
\begin{equation}\label{E+}
|E^+|_f\geq \frac{1}{M}\, \delta \H^{N-1}(G)\geq \frac{1}{M}\, \delta a^{N-1}(1-6\rho)\,.
\end{equation}
Since we need also an upper bound for $|E^+|_f$, we consider separately the sets $G$ and $Q\setminus G$.
In $G$ we have
\begin{equation}\label{E+G}
\big|E^+ \cap (G\times (-a/2,a/2))\big|_f \leq M \delta \H^{N-1}(G)\leq M\delta a^{N-1}\,.
\end{equation}
Recall now that $Q\setminus G=A\cup B\cup\Gamma$. By definition of $B$, if $x'\in B$ then $E_{x'}$ is either empty or the whole segment $(-a/2,a/2)$: in both cases, $E^+_{x'}=\emptyset$. Therefore, recalling also~\eqref{est:A} and~\eqref{gamma-2c}, we have
\begin{equation}\label{E+B}
\Big|E^+ \cap \big((A\cup B\cup \Gamma_3)\times (-a/2,a/2)\big)\Big|_f=0\,.
\end{equation}
Finally, observe that
\[
\mbox{for every}\,x' \in \Gamma_0 \cup \Gamma_1 \cup \Gamma_2\,,\qquad
\H^1\Big((E^+)_{x'} \cup (E_1^-)_{x'}\Big)\leq \delta \H^0(\partial^*E_{x'})\,.
\]
Therefore, using~\eqref{est-gamma}, we deduce that
\begin{equation}\label{VE2}\begin{split}
\Big|\big(E^+ \cup E_1^-\big) &\cap \Big(\big(\Gamma_0 \cup \Gamma_1 \cup \Gamma_2\big)\times (-a/2,a/2)\Big)\Big|_f\\
&\leq M\int_{\Gamma_0 \cup\Gamma_1 \cup \Gamma_2} \H^1\Big((E^+)_{x'}\cup (E_1^-)_{x'}\Big)\,d\H^{N-1}(x') \\
& \leq M\delta \int_{\Gamma_0 \cup\Gamma_1\cup \Gamma_2}\H^0(\partial^*E_{x'})\,d\H^{N-1}(x')\\
& \leq M\delta\H^{N-1}\Big(\partial^*E \cap\big(\Gamma_0 \cup\Gamma_1\cup\Gamma_2\big)\times (-a/2,a/2)\Big)
\leq 8\rho M \delta a^{N-1}\,.
\end{split}\end{equation}
It remains to estimate $|E^-_1|$. Observe that if $x' \in B\cup G$ then $(E^-_1)_{x'}=\emptyset$: then, arguing exactly as we did to get~(\ref{E+B}), using~\eqref{est:A} and~\eqref{gamma-2c}, we have
\begin{equation}\label{VE3}
\Big| E^-_1 \cap \big((A\cup B \cup G \cup \Gamma_3) \times (-a/2,a/2)\big)\Big|_f=0\,.
\end{equation}
We have now all the ingredients to estimate $|F|_f-|E|_f$ both from above and below, thanks to~\eqref{est:Q}. Indeed, on one hand, combining~\eqref{E+}, \eqref{VE2}, \eqref{VE3} and~(\ref{E2-}), and up to take $\rho$ sufficiently small, we get
\begin{equation}\label{lowest}
|F|_f-|E|_f\geq \delta a^{N-1}\left(\frac{1}{M}(1-6\rho) -8\rho M-24M^3\rho\right)\geq \frac{\delta a^{N-1}}{2M}\,.
\end{equation}
On the other hand, combining~\eqref{E+G}, \eqref{E+B}, and~\eqref{VE2}, we also find
\begin{equation}\label{uppest}
|F|_f-|E|_f\leq |E^+|_f\leq M\delta a^{N-1}(1+8\rho)\leq 2M\delta a^{N-1}\,.
\end{equation}

\noindent\textbf{Perimeter}. We are then left to find an upper bound for $P_f(F)-P_f(E)$ in terms of $\delta$. This will be the only point in this Part where we are going to use the $\alpha$-H\"older assumption on $f$.\par

We start pointing out that the change in perimeter has four contributions. First, since we move upwards the set $E$ of a distance $\delta$ inside the cube $Q^N$, on the lateral boundary $\partial Q\times (-a/2,a/2)$ we are adding a surface $T_1^+$ of ``height'' $\delta$, namely,
\[
T_1^+:=\big(\partial^* F \setminus \partial^* E\big)\cap \big(\partial Q \times (-a/2,a/2)\big)\,.
\]
 Second, since in the strip $Q\times (\sigma^-,\sigma^- + \delta)$ the set $F$ is defined as $F=E^{\sigma^-}\times (\sigma^-,\sigma^-+\delta)$, then we are creating some new surface $T_2^+$ as soon as $\partial^*E^{\sigma^-}$ is not empty. More precisely, we set
\[
T_2^+:=\partial^*E^{\sigma^-} \times (\sigma^-, \sigma^-+\delta)\,.
\]
Third, since we are cutting away the set $E\cap \big(Q \times (\sigma^+,\sigma^+ + \delta)\big)$, then we are removing some surface $T_3^-$ in the strip, but at the same time we might also create some new surface $T_3^+$ at the level $\sigma^++\delta$. Hence, we call
\begin{align*}
T_3^-:=\partial^*E \cap \big(Q \times (\sigma^+,\sigma^+ + \delta)\big)\,, && T_3^+:=\pi'\big(T_3^-\big)\,,
\end{align*}
being $\pi': Q^N \rightarrow Q\times \big\{x_N=\sigma^+ + \delta\big\}$ the projection on the last variable. The last contribution comes from the fact that, since we are slightly moving $\partial^*E \cap Q^N$ between the levels $\sigma^-$ and $\sigma^+$, we have to take into account that the density is changing. We set then finally
\begin{align*}
T_4^-:=\partial^* E \cap \big(Q \times (\sigma^-,\sigma^+)\big)\,, && 
T_4^+:=\big\{(x',x_N+\delta):(x',x_N)\in T_4^-\big\}\,.
\end{align*}
By construction, we can write
\[
\partial^*F \subseteq \Big(\partial^*E \setminus (T_3^- \cup T_4^-)\Big)\cup \Big(T_1^+\cup T_2^+\cup T_3^+\cup T_4^+\Big)\,.
\]
Thus, since the sets $T_i^+$ are $\H^{N-1}$-essentially pairwise disjoint, and so are also $T_3^-$ and $T_4^-$, one can estimate
\begin{equation}\label{Per}\begin{split}
P_f(F)&-P_f(E) \leq \H^{N-1}_f (T_1^+) + \H^{N-1}_f (T_2^+)\\
&+ \Big(\H^{N-1}_f (T_3^+)- \H^{N-1}_f (T_3^-)\Big)+\Big(\H^{N-1}_f (T_4^+)-\H^{N-1}_f (T_4^-)\Big)\,.
\end{split}\end{equation}
We now estimate the terms in the right hand side of the above inequality one by one: while the first two terms ($\H^{N-1}(T_i^+)$ for $i=1,\,2$) are small by the sole essential boundedness of $f$, to show that the last two terms ($\H^{N-1}(T_i^+)-\H^{N-1}(T_i^-)$ for $i=3,\,4$) are small one needs to use the essential $\alpha$-H\"older assumption on $f$.\par

Let us begin by considering $T_1^+$: by the definition, and also recalling~(\ref{noticethat}), it is easy to show the inclusion
\[
T_1^+\subseteq \Big\{(x',x_N)\in \partial Q\times \big(-a/2,a/2\big):\, \exists\, (x',t)\in \partial^* E,\, x_N-\delta\leq t\leq x_N\Big\}\,,
\]
of course to be intended in the $\H^{N-1}$-sense. Therefore, by using~\eqref{bordo}, we directly find
\begin{equation}\label{est:1}
\H^{N-1}_f(T_1^+)\leq M\delta\H^{N-2}\Big(\partial^*E\cap\big(\partial Q\times(-a/2,a/2)\big)\Big)
\leq 2^{N+1} M\delta a^{N-2}\,.
\end{equation}
Concerning $T_2^+$, it is sufficient to recall~\eqref{P-} in order to obtain
\begin{equation}\label{est:2}
\H^{N-1}_f\big(T_2^+\big)\leq M\delta \H^{N-2}(\partial^*E^{\sigma_-})
\leq 3M\rho a^{N-2}\delta\,.
\end{equation}
We compare now $T_3^+$ and $T_3^-$. Since the projection $\pi'$ is $1$-Lipschitz and $f$ is $\alpha$-H\"older on $Q^N$, we have by~(\ref{PV+}) and recalling that $\bar\delta/(4M^2)\leq \delta\leq \bar\delta$ that
\begin{equation}\label{est:3}
\begin{split}
\H^{N-1}_f(T_3^+)-\H^{N-1}_f(T_3^-)&=\int_{T_3^+}f(x)\, d\H^{N-1}(x)-\int_{T_3^-}f(x)\, d\H^{N-1}(x)\\
&\leq \int_{T_3^-} \Big(f(\pi'(y))-f(y)\Big)\,d\H^{N-1}(y)
\leq M\delta^\alpha\H^{N-1}\big(T_3^-\big)\\
&\leq M\delta^\alpha\H^{N-1}\big(\partial^*E \cap S_{\bar i}\big)
\leq 24 M^3\delta^{\alpha+1} \rho a^{N-2}\,.
\end{split}
\end{equation}
Finally, using again the $\alpha$-H\"older property, we can compare $T_4^+$ and $T_4^-$ as follows
\begin{equation}\label{est:4}
\begin{split}
\H^{N-1}_f(T_4^+)-\H^{N-1}_f(T_4^-)&=\int_{T_4^+}f(x)\,d\H^{N-1}(x)-
\int_{T_4^-}f(x)\, d\H^{N-1}(x)\\
&= \int_{T_4^-} \Big(f(x',x_N+\delta))-f(x',x_N)\Big)\,d\H^{N-1}(x)\\
&\leq M\delta^\alpha\H^{N-1}(T_4^-)
\leq M\delta^{\alpha}a^{N-1}(1+2\rho)\,.
\end{split}
\end{equation}
Plugging~\eqref{est:1}, \eqref{est:2}, \eqref{est:3} and~\eqref{est:4} into~\eqref{Per}, and recalling that $\rho,\, a$ and $\delta$ are as small as we desire, we conclude
\begin{equation}\label{perest}
P_f(F)-P_f(E)\leq 2 M\delta^{\alpha}a^{N-1}\,.
\end{equation}

\step{(v)}{Conclusion of the $\eps-\eps^\alpha$ property.}
We can now conclude very quickly the $\eps-\eps^\alpha$ property. Indeed, take a very small $\eps>0$, and let $\bar\delta=2M\eps/a^{N-1}$. Then, observe that the volume of the set $F=F(\delta)$ is a continuous function of $\delta \in \big(\bar\delta/(4M^2), \bar\delta\big)$: thus, thanks to~(\ref{lowest}) and~(\ref{uppest}), there exists some admissible $\delta$ for which $|F|_f -|E|_f = \eps$. In particular, $\delta$ satisfies
\begin{equation}\label{epsdelta}
\frac{\eps}{2M} \leq \delta a^{N-1} \leq 2M \eps \,.
\end{equation}
Therefore, (\ref{perest}) immediately implies $P_f(F)-P_f(E)\leq (2M)^{1+\alpha} a^{(N-1)(1-\alpha)} \eps^\alpha$.\par

To finish the proof of the $\eps-\eps^\alpha$ property, we have to consider the case when $\eps<0$ and $|\eps| \ll 1$. To do so, still assuming for simplicity that the origin of $\R^N$ belongs to $U \cap \partial^* E$ and the outer normal of $E$ at the origin is the vertical direction, we define $E'= B(1)\setminus E$. Of course $E'$ is a set of finite perimeter, and $\partial^* E= \partial^* E'$ inside the unit ball. We can then apply all the preceding construction to the set $E'$, finding a new set $F'$ such that
\begin{align*}
F' \setminus Q^N = E' \setminus Q^N\,, && \big|F'\big|_f=\big| E'\big|_f + |\eps| \,, && P_f (F') \leq P_f(E') + C |\eps|^\alpha\,,
\end{align*}
being
\[
C= (2M)^{1+\alpha} a^{(N-1)(1-\alpha)}\,.
\]
Defining then $F=\big(E \setminus Q^N\big) \cup \big( Q^N \setminus F'\big)$, we clearly have
\begin{align*}
F \setminus Q^N = E \setminus Q^N\,, && \big|F\big|_f=\big| E\big|_f + \eps \,, && P_f (F) \leq P_f(E) + C |\eps|^\alpha\,,
\end{align*}
so the $\eps-\eps^\alpha$ property is finally established.

\part{II}{The $\eps-\eps^\beta$ property.}
This second part of the proof is devoted to show the $\eps-\eps^\beta$ property for $E$, where $\beta=\beta(\alpha,N)$ is defined in~(\ref{defbeta}). Notice that we can assume $0\leq \alpha<1$, since otherwise $\beta=\alpha$ and then the property has been already shown in Part~I.\par

Our idea is to follow exactly the construction of Part~I except for a single, yet fundamental, detail. To explain this, recall that in Part~I we have selected a cube $Q^N$, of small but fixed side $a$; then, for any small constant $\eps>0$, we have defined $F$ by moving up the set $E$ \emph{in the whole cube $Q^N=Q \times (-a/2,a/2)$} of a distance $\delta \approx \eps$ --in the sense of~(\ref{epsdelta}). What we will do now, instead, will be the following: for every small constant $\eps>0$, we will find a smaller $(N-1)$-dimensional horizontal cube $Q_\eps\subseteq Q$ of side $a\eps^\gamma$, being $\gamma$ a suitable constant to be specified later. Then, we will define $F$ by moving up the set $E$ \emph{only inside $Q_\eps\times (-a/2,a/2)$}, and of a bigger distance $\delta \approx \eps^{1-(N-1)\gamma}$. Of course, this can make sense for arbitrarily small $\eps$ only if $0 < \gamma < \frac 1{N-1}$. Once had this idea, the proof is only a quite simple modification of the argument of Part~I; basically, one only has to select carefully the small $(N-1)$-dimensional cube $Q_\eps$, write down the new form of all the estimates already found in Part~I, and then select the right constant $\gamma$. We will again split the proof in some steps. First of all, we fix the constant $a>0$ and the cube $Q^N=(-a/2,a/2)^N$ exactly as in Step~(i) of Part~I, and we also let $0<\gamma< \frac 1{N-1}$ be a constant, which will be explicitely chosen later.

\step{(i)}{Selection of $H$ ``candidate cubes'' satisfying~(\ref{newbordo}).}
Let $0< \eps \ll 1$ be a suitably small positive number. We aim to select a $(N-1)$-dimensional cube $Q_\eps\subseteq Q$ of side $a\eps^\gamma$; to do so, we will proceed in two different steps. In this first one, we select a high number of cubes satisfying the new version of the boundary estimate~(\ref{bordo}), namely, the estimate~(\ref{newbordo}) below; then, in next step, we will choose one of those cubes, which will fulfill also all the other conditions that we need.\par

We start setting
\[
H:= \left[ \frac{\eps^{(1-N)\gamma}}{2^{N+1}}\right]\,,
\]
and selecting $2H$ disjoint cubes $\big\{\widetilde Q_j\big\}_{j=1,\, \dots ,\, 2H}$ contained in $Q$ and having side $2a\eps^\gamma$; this is of course possible by definition of $H$ as soon as $\eps$ is small enough. Let us now concentrate on a single cube $\widetilde Q_j$, which is centered at some $\bar x'\in Q$, and call $\widetilde Q_j^{1/2}$ the cube centered at $\bar x'$ and having side $a\eps^\gamma$. For every $x'\in \widetilde Q_j^{1/2}$, we call
\[
Q(x'):= \prod_{i=1}^{N-1} \bigg( x'_i - \frac{a\eps^\gamma}2,\, x'_i + \frac{a\eps^\gamma}2\bigg)
\]
the cube of side $a\eps^\gamma$ centered at $x'$, which is of course contained in $\widetilde Q_j$. A simple rough estimate ensures that
\[\begin{split}
\H^{N-1} \Big( \partial^* E &\cap \big( \widetilde Q_j \times (-a/2, a/2)\big)\Big)\\
&\geq \frac{1}{2(N-1)(a\eps^\gamma)^{N-2}} \, \int_{\widetilde Q_j^{1/2}} \H^{N-2} \Big( \partial^* E \cap \big( \partial Q(x')\times (-a/2,a/2)\big)\Big) \, d\H^{N-1}(x')\\
&= \frac{a\eps^\gamma}{2(N-1)} \, \intmed_{\widetilde Q_j^{1/2}} \H^{N-2} \Big( \partial^* E \cap \big( \partial Q(x')\times (-a/2,a/2)\big)\Big) \, d\H^{N-1}(x')\,.
\end{split}\]
As a consequence, there exists some $x_j'\in \widetilde Q_j^{1/2}$ such that
\begin{equation}\label{prevest}
\H^{N-2} \Big( \partial^* E \cap \big( \partial Q(x'_j)\times (-a/2,a/2)\big)\Big) \leq
\frac{2(N-1)}{a\eps^\gamma}\,\H^{N-1} \Big( \partial^* E \cap \big( \widetilde Q_j \times (-a/2, a/2)\big)\Big)\,.
\end{equation}
Observe now that, since the cubes $\widetilde Q_j$ are disjoint and contained in $Q$, it is
\[
\sum_{j=1}^{2H} \H^{N-1} \Big( \partial^* E \cap \big( \widetilde Q_j \times (-a/2, a/2)\big)\Big)
\leq \H^{N-1} \big( \partial^*E \cap Q^N \big) \leq \big( 1+2\rho\big) a^{N-1}\,.
\]
Among the $2H$ cubes $\widetilde Q_j$, there are then at least $H$ cubes which satisfy
\begin{equation}\label{toinsert}
\H^{N-1} \Big( \partial^* E \cap \big( \widetilde Q_j \times (-a/2, a/2)\big)\Big) \leq \frac{\big(1+2\rho\big) a^{N-1}}{H}
\leq 2^{N+1}\big(1+3\rho\big) \big(a\eps^\gamma\big)^{N-1}\,.
\end{equation}
Up to renumbering, we can assume that those ``good'' cubes correspond to the indices $j=1,\, 2,\, \dots \, , \, H$. Hence, for any such $j$ we define $Q_j:= Q(x_j')$. Summarizing, we have found $H$ disjoint $(N-1)$-dimensional cubes $\big\{ Q_j \big\}_{j=1,\, \dots ,\, H}$ of side $a\eps^\gamma$ contained inside $Q$, and inserting~(\ref{toinsert}) in~(\ref{prevest}) we find that each of these cubes satisfies the estimate
\begin{equation}\label{newbordo}
\H^{N-2}\Big(\partial^*E\cap\big(\partial Q_j\times (-a/2,a/2)\big)\Big) \leq 2^{N+2} N \big(a \eps^\gamma\big)^{N-2}\,,
\end{equation}
which can be seen as the new version of~(\ref{bordo}). Exactly as in~(\ref{noticethat}), Vol'pert Theorem~\ref{volpert} allows us to assume, without loss of generality, that for every $1\leq j\leq H$ it is
\begin{equation}\label{New:noticethat}
\partial^*E \cap\big(\partial Q_j\times (-a/2,a/2)\big)=\partial^*\Big(E\cap\big(\partial Q_j\times (-a/2,a/2)\big)\Big)\qquad \H^{N-2}-a.e.\,.
\end{equation}

\step{(ii)}{Choice of the cube $Q_\eps$.}
In this step, we will select one of the $H$ cubes found in Step~(i), and we will call it $Q_\eps$. We will denote by $A_\eps,\, B_\eps,\, G_\eps$ and $\Gamma_\eps$ the intersections of $A,\, B,\, G$ and $\Gamma$ with $Q_\eps$, where we consider the decomposition $Q=A\cup B\cup G \cup \Gamma$ already presented in Step~(ii) of Part~I, and we will write for brevity $Q_\eps^N=Q_\eps \times (-a/2,a/2)$. We aim to choose $Q_\eps$ in such a way that the following holds:
\begin{gather}
\frac{\H^{N-1}\big(\partial^*E \cap Q_\eps^N\big)}{\big(a\eps^\gamma\big)^{N-1}} \leq 1+2^{N+4}\cdot 8\rho\,,\label{NewP2}\\
\frac{\H^{N-1}\big(\partial^*E \cap Q_\eps^N\setminus\{-a\rho<x_N<a\rho\}\big)}{\big(a\eps^\gamma\big)^{N-1}} \leq 2^{N+4} \rho\,,\label{NewP3}\\
\frac{\H^N\big(E \cap Q_\eps^N\cap\{x_N>0\}\big)}{a^N\eps^{\gamma(N-1)}/2} \leq 2^{N+4} \rho\,,\label{NewV2}\\
\frac{\H^{N-1}\big(G_\eps\big)}{\big(a\eps^\gamma\big)^{N-1}}\geq 1 - 2^{N+4}\cdot 6 \rho \,.\label{newlast}
\end{gather}
Let us show that this is possible. First of all, writing for brevity $Q_j^N= Q_j\times (-a/2,a/2)$ for every $1\leq j \leq H$, we can apply~(\ref{P3}) to find
\[
a^{N-1} \rho \geq \H^{N-1}\big(\partial^*E \cap Q^N\setminus\{-a\rho<x_N<a\rho\}\big)
\geq \sum_{j=1}^H \H^{N-1}\big(\partial^*E \cap Q_j^N\setminus\{-a\rho<x_N<a\rho\}\big)\,.
\]
As a consequence, \emph{strictly more than $75\%$} of the $H$ cubes satisfy
\[
\H^{N-1}\Big(\partial^*E \cap Q_j^N\setminus\{-a\rho<x_N<a\rho\}\Big) \leq 4\, \frac{a^{N-1}\rho}{H}
\leq 2^{N+4} \rho \big(a\eps^\gamma\big)^{N-1} \,,
\]
that is, (\ref{NewP3}). In the very same way, applying~(\ref{V2}) we observe that more than $75\%$ of the cubes satisfy~(\ref{NewV2}), and applying~(\ref{est:G}) we observe than more than $75\%$ of the cubes satisfy~(\ref{newlast}).\par
Some additional care is required to obtain also~(\ref{NewP2}). In fact, let $\pi: Q^N\to Q$ be the projection on the horizontal variables, and define the measure $\mu \in \mathcal M(Q)$ as
\[
\mu := \pi_\# \Big( \H^{N-1} \res \big(\partial^* E \cap Q^N\big)\Big) - \H^{N-1}\res G\,.
\]
In other words, for every $(N-1)$-dimensional Borel set $V\subseteq Q$, we set
\[
\mu (V) := \H^{N-1} \Big( \partial^* E \cap \big\{ (x',x_N)\in Q^N:\, x'\in V\big\}\Big)- \H^{N-1}\big( V \cap G\big)\,.
\]
By construction and by definition of $G$, one clearly has that $\mu$ is a positive measure; hence, by~(\ref{P2}), (\ref{P3}) and~(\ref{est:G}) we deduce
\[
\| \mu \| = \mu (Q) = \H^{N-1}\big( \partial^* E \cap Q^N\big)-\H^{N-1}(G)
\leq 8 \rho a^{N-1}\,.
\]
Thus, arguing as before, we find that more than $75\%$ of the cubes satisfy
\[
\mu\big(Q_j\big) \leq 4\, \frac{8\rho a^{N-1}}{H} \leq 2^{N+4}\cdot 8 \rho \big(a\eps^\gamma\big)^{N-1}\,.
\]
For each of those cubes, it is clearly
\[
\H^{N-1}\big(\partial^*E \cap Q_j^N\big) = \H^{N-1} \big( Q_j \cap G\big) + \mu\big( Q_j\big)
\leq \big( 1+ 2^{N+4}\cdot 8 \rho\big) \big(a \eps^\gamma\big)^{N-1}\,,
\]
thus~(\ref{NewP2}) holds.\par
As a consequence, there must be at least one of the cubes $Q_j$ which satisfies contemporarily~(\ref{NewP2}), (\ref{NewP3}), (\ref{NewV2}) and~(\ref{newlast}), hence we conclude this step by calling $Q_\eps$ one of those ``good'' cubes. Recall that, since $Q_\eps$ is one of the $H$ cubes found in Step~(i), then also~(\ref{newbordo}) holds true.

\step{(iii)}{Definition of $F$ and evaluation of its volume and perimeter.}
We can now easily give the definition of $\sigma^+,\,\sigma^-$ and $F$, and evaluate the volume and perimeter of $F$, performing the very same arguments done in Steps~(iii) and~(iv) of Part~I. In fact, we will see that~(\ref{newbordo})--(\ref{newlast}) are the analogous of everything that we really needed there.\par

We start again by fixing some $\bar\delta\ll a$: then, exactly as we proved~(\ref{PV+}), we can use~(\ref{NewP3}) and~(\ref{NewV2}) to find $a\rho<\sigma^+< a/2 - 2\rho$ such that the horizontal strip $S_{\bar i}=Q_\eps \times (\sigma^+, \sigma^++\bar\delta)$ satisfies
\begin{equation}\label{NewPV+}
a\H^{N-1}(\partial^*E\cap S_{\overline i})+\H^N(E\cap S_{\overline i})
\leq 2^{N+6} \bar\delta \rho (a\eps^\gamma)^{N-1}\,.
\end{equation}
Moreover, exactly as we used~(\ref{P3}) to prove~(\ref{P-}), we can use~(\ref{NewP3}) to get the existence of some $-a/2 < \sigma^-< -a\rho$ such that
\begin{equation}\label{NewP-}
\H^{N-2}\big(\partial^*(E\cap Q_\eps^N)^{\sigma^-}\big)\leq 3\cdot 2^{N+4}\rho \, \frac{\big(a \eps^\gamma\big)^{N-1}}a
\leq 3\cdot 2^{N+4}\rho \, \big(a \eps^\gamma\big)^{N-2}\,.
\end{equation}
We can now generalize also the definition of the competitor set in the obvious way. More precisely, for every $\bar\delta/(4M^2)\leq\delta\leq \bar\delta$ we set $F=F(\delta)$ as
\[
x\in F \Longleftrightarrow \left\{\begin{array}{ll}
x\in E \setminus Q_\eps^N \,,\\
x \in E\cap Q_\eps^N\cap \big(\{x_N\leq \sigma^-\} \cup \{x_N> \sigma^+ + \delta\big\})\,,\\
(x',\sigma^-)\in E\cap Q_\eps^N\ \mbox{and}\ \sigma^-< x_N\leq \sigma^-+\delta\,, \\
(x',x_N-\delta)\in E\cap Q_\eps^N\ \mbox{and}\ \sigma^-+\delta<x_N \leq \sigma^+ +\delta\,.
\end{array}\right.
\]
Let us evaluate now the volume and perimeter of $F$. Concerning the volume, similarly as in Part~I we define
\begin{align*}
E^+:= F\setminus E\,, && E^-_1:= E\setminus F \cap \big(Q_\eps\times (\sigma^-,\,\sigma^+)\big)\,, 
&& E^-_2:= E\setminus F \cap \big(Q_\eps\times (\sigma^+,\,\sigma^++\delta)\big) \,,
\end{align*}
so that $F=E \cup E^+ \setminus \big( E^-_1\cup E^-_2\big)$ and~(\ref{est:Q}) holds, namely, $|F|_f-|E|_f=|E^+|_f-|E^-_1|_f-|E_2^-|_f$. We start with the estimate of the volume of $E^-_2$: recalling that $f\leq M$ in $Q^N$, and using~(\ref{NewPV+}), we get
\begin{equation}\label{New:E2-}
\big|E^-_2\big|_f
\leq\big|E\cap S_{\bar i}\big|_f
\leq 2^{N+6} M \bar\delta \rho (a\eps^\gamma)^{N-1}
\leq 2^{N+8} M^3 \delta \rho (a\eps^\gamma)^{N-1}\,.
\end{equation}
To estimate from below the volume $E^+$, it is enough to recall that $E^+\supseteq \cup_{x'\in G_\eps} E^+_{x'}\,$, and that $E^+_{x'}$ is a segment of length $\delta$ for every $x'\in G_\eps$. Thus, by~(\ref{newlast}) we get
\begin{equation}\label{New:E+}
|E^+|_f\geq \frac{1}{M}\, \delta \H^{N-1}(G_\eps)\geq \frac{1}{M}\, \delta \big(a\eps^\gamma\big)^{N-1} \big(1 - 2^{N+4}\cdot 6 \rho\big)\,,
\end{equation}
and conversely
\begin{equation}\label{New:E+G}
\big|E^+\cap (G_\eps\times(-a/2,a/2))\big|_f \leq M \delta\H^{N-1}(G_\eps)\leq M\delta \big(a \eps^\gamma\big)^{N-1}\,.
\end{equation}
Now, since $A_\eps,\, B_\eps,\, G_\eps$ and $\Gamma_{3,\eps}$ are contained by definition in $A,\,B,\, G$ and $\Gamma_3$, by~(\ref{E+B}) and~(\ref{VE3}) we immediately deduce that
\begin{equation}\label{New:E+B+VE3}
\Big| \Big(E^-_1 \cup E^+\Big) \cap \Big((A_\eps\cup B_\eps \cup \Gamma_{3,\eps}) \times (-a/2,a/2)\Big)\Big|_f
=\Big|E_1^- \cap \big( G_\eps \times (-a/2,a/2)\big)\Big|_f=0\,.
\end{equation}
Finally observe that, thanks to~(\ref{NewP2}) and~(\ref{newlast}) and by the definition of $G$, we have
\begin{equation}\label{New:est-gamma}\begin{split}
\H^{N-1}\Big(\partial^*E &\cap \big( A_\eps \cup B_\eps \cup \Gamma_\eps\big)\times (-a/2,a/2)\Big)\\
&=\H^{N-1}\Big(\partial^*E \cap Q_\eps^N\Big) - \H^{N-1}\Big(\partial^*E \cap \big(G_\eps\times (-a/2,a/2)\big)\Big)\\
&\leq 2^{N+4}\cdot 14 \rho \big(a \eps^\gamma\big)^{N-1}\,,
\end{split}\end{equation}
which is the perfect analogous of~(\ref{est-gamma}). Thus, exactly as in~(\ref{VE2}), we obtain
\begin{equation}\label{New:VE2}
\Big|\big( E_1^- \cup E^+\big) \cap \Big(\big(\Gamma_{0,\eps} \cup \Gamma_{1,\eps} \cup \Gamma_{2,\eps}\big)\times (-a/2,a/2)\Big)\Big|_f
\leq 2^{N+4}\cdot 14 M\rho \big(a \eps^\gamma\big)^{N-1}\delta \,.
\end{equation}
We can finally write down the estimates for $|F|_f-|E|_f$. Indeed, on one hand, by~\eqref{New:E+}, \eqref{New:VE2}, \eqref{New:E+B+VE3} and~(\ref{New:E2-}), and up to take $\rho$ sufficiently small, we get
\[
|F|_f-|E|_f\geq 
\delta \big(a\eps^\gamma\big)^{N-1} \bigg( \frac{1}{M}\, \big(1 - 2^{N+4}\cdot 6 \rho\big)
-2^{N+4}\cdot 14 M\rho -2^{N+8} M^3 \rho \bigg)
\geq \frac{\delta \big(a\eps^\gamma\big)^{N-1}}{2M}\,.
\]
On the other hand, putting together~\eqref{New:E+G}, \eqref{New:E+B+VE3}, and~\eqref{New:VE2}, we also find
\[
|F|_f-|E|_f\leq |E^+|_f\leq M\delta \big(a \eps^\gamma\big)^{N-1} \Big( 1 +2^{N+4}\cdot 14 \rho\Big)
\leq 2M\delta\big(a \eps^\gamma\big)^{N-1}\,.
\]
The same argument used in Step~(v) of Part~I ensures then that, if we define $\displaystyle\bar\delta=\frac{2M\eps^{1-(N-1)\gamma}}{a^{N-1}}$, then there exists an admissible $\delta$ such that
\begin{align}\label{tte}
|F|_f-|E|_f = \eps\,, &&
\frac{\eps^{1-(N-1)\gamma}}{2M a^{N-1}} \leq \delta \leq \frac{2M\eps^{1-(N-1)\gamma}}{a^{N-1}}\,.
\end{align}
Let us now pass to study the perimeter of $F$. Exactly as in Step~(iv) of Step~I, we define
\begin{align*}
T_1^+&:=\big(\partial^* F \setminus \partial^* E\big)\cap \big(\partial Q_\eps \times (-a/2,a/2)\big)\,, 
& T_2^+&:=\partial^*\big(E \cap Q_\eps^N\big)^{\sigma^-} \times (\sigma^-, \sigma^-+\delta)\,,\\
T_3^-&:=\partial^*E \cap \big(Q_\eps \times (\sigma^+,\sigma^+ + \delta)\big)\,, & T_3^+&:=\pi'\big(T_3^-\big)\,,\\
T_4^-&:=\partial^* E \cap \big(Q_\eps \times (\sigma^-,\sigma^+)\big)\,, &
T_4^+&:=\big\{(x',x_N+\delta):(x',x_N)\in T_4^-\big\}\,.
\end{align*}
where $\pi': Q_\eps^N \rightarrow Q_\eps\times \big\{x_N=\sigma^+ + \delta\big\}$ is the projection on the last variable. Then, we clearly still have the validity of~(\ref{Per}), so we need to estimate the $\H^{N-1}_f$ measures of the sets $T_i^\pm$.\par
The same argument which proved~(\ref{est:1}), keeping in mind~(\ref{New:noticethat}) and using~(\ref{newbordo}) in place of~(\ref{bordo}), gives now
\begin{equation}\label{New:est:1}
\H^{N-1}_f(T_1^+)\leq M\delta\H^{N-2}\Big(\partial^*E\cap\big(\partial Q_\eps\times(-a/2,a/2)\big)\Big)
\leq 2^{N+2} N M \big(a \eps^\gamma\big)^{N-2}\delta \,.
\end{equation}
Concerning $T_2^+$, (\ref{NewP-}) immediately gives
\begin{equation}\label{New:est:2}
\H^{N-1}_f\big(T_2^+\big)\leq M\delta \H^{N-2}\big(\partial^*\big(E \cap Q_\eps^N\big)^{\sigma^-}\big)
\leq 3\cdot 2^{N+4} M \rho \, \big(a \eps^\gamma\big)^{N-2}\delta\,.
\end{equation}
Exactly as in~(\ref{est:3}), a comparison between the $\H^{N-1}(T^+_3)$ and $\H^{N-1}(T^-_3)$ readily comes from~(\ref{NewPV+}) and using the $\alpha$-H\"older property of $f$, since
\begin{equation}\label{New:est:3}
\begin{split}
\H^{N-1}_f(T_3^+)-\H^{N-1}_f(T_3^-)&\leq \int_{T_3^-} \Big(f(\pi'(y))-f(y)\Big)\,d\H^{N-1}(y)
\leq M\delta^\alpha\H^{N-1}\big(T_3^-\big)\\
&\leq M\delta^\alpha\H^{N-1}\big(\partial^*E \cap S_{\bar i}\big)
\leq \frac{2^{N+8} M^3 \delta^{\alpha+1} \rho (a\eps^\gamma)^{N-1}}a\,.
\end{split}
\end{equation}
Finally, using once again the $\alpha$-H\"older property of $f$ as in~(\ref{est:4}), and recalling~(\ref{NewP2}), we get
\begin{equation}\label{New:est:4}
\begin{split}
\H^{N-1}_f(T_4^+)-\H^{N-1}_f(T_4^-)&= \int_{T_4^-} \Big(f(x',x_N+\delta))-f(x',x_N)\Big)\,d\H^{N-1}(x)\\
&\leq M\delta^\alpha\H^{N-1}(T_4^-)
\leq M\delta^{\alpha}\big(a\eps^\gamma\big)^{N-1} \big(1+2^{N+4}\cdot 8\rho\big)\,.
\end{split}
\end{equation}

\step{(iv)}{Choice of $\gamma$ and conclusion.}
We are finally ready to conclude our proof. In the preceding steps, we have shown that for every $0<\eps \ll 1$ there exists some set $F$ which equals $E$ out of a small cube $Q^N$ and such that $|F|_f - |E|_f=\eps$. Moreover, putting together~(\ref{New:est:1}), (\ref{New:est:2}), (\ref{New:est:3}) and~(\ref{New:est:4}), and recalling~(\ref{tte}), we also have the estimate
\begin{equation}\label{lastestimate}
P_f(F)-P_f(E)\leq C' \Big(\delta \eps^{\gamma(N-2)} + \delta^\alpha \eps^{\gamma(N-1)}\Big)
\leq C'' \Big( \eps^{1-\gamma}+\eps^{\alpha+\gamma(N-1)(1-\alpha)}\Big)\,,
\end{equation}
being
\[
C'' = \frac{2^{N+4} N M^2}a + (2M)^{1-\alpha} a^{(N-1)(1-\alpha)}\,.
\]
Recall now that $0<\gamma<\frac{1}{N-1}$ is a fixed constant, still to be chosen. It is then finally clear what is the best choice for $\gamma$: indeed, notice that $\gamma_1= 1-\gamma$ is a decreasing function of $\gamma$, while $\gamma_2=\alpha+\gamma(N-1)(1-\alpha)$ is an increasing one, and notice also that, since $0<\alpha<1$, then $\gamma_1>\gamma_2$ (resp., $\gamma_1<\gamma_2$) for $\gamma\approx 0$ (resp., $\gamma\approx \frac 1{N-1}$). Therefore, the optimal choice of $\gamma$ corresponds to the situation when $\gamma_1=\gamma_2$, which means that we can decide
\begin{equation}\label{defgamma}
\gamma:= \frac{1-\alpha}{\alpha+N(1-\alpha)}\,.
\end{equation}
Summarizing, we have been able to build a set $F$ with $|F|_f-|E|_f=\eps$ and
\[
P_f(F)-P_f(E) \leq 2C'' \eps^\beta
\]
with
\begin{equation}\label{newdefbeta}
\beta = \frac{\alpha+(N-1)(1-\alpha)}{\alpha+N(1-\alpha)}\,,
\end{equation}
which corresponds to~(\ref{defbeta}). To conclude the $\eps-\eps^\beta$ property, we only have to deal with the case when $\eps<0$ and $|\eps|\ll 1$; however, the case of negative $\eps$ can be derived from the case of positive $\eps$ exactly as we did in Step~(v) of Part~I. Hence, also this second part is concluded.

\part{III}{The case when $f$ is continuous.}
Let us conclude our proof by considering the case when $f$ is only essentially bounded and continuous. By the result of Part~II, the essential boundedness of $f$, thus the essential $\alpha$-H\"older property with $\alpha=0$, already tells us that the $\eps-\eps^{\frac{N-1}N}$ property holds true with some constant $C''$, since $\beta=\frac{N-1}N$ when $\alpha=0$ by~(\ref{newdefbeta}). What we want to do, is to show the validity of the same property with \emph{any} arbitrarily small constant.\par
To do so, recall that the estimate of $P_f(F)-P_f(E)$ comes from four terms, see~(\ref{New:est:1})--(\ref{New:est:4}) above. Our strategy will be to slightly modify the definition of $\bar\delta$ in order to decrease as desired the first two terms; unfortunately, while doing so the last two terms will correspondingly increase. However, using the fact that $f$ is continuous, instead of only essentially bounded, we will be able to let also the last two terms become arbitrarily small.\par

Let us be more precise: we fix a small number $c>0$ and we aim to show the $\eps-\eps^{\frac{N-1}N}$ property with constant $C=c$. To do so, we recall the construction and the estimates of Step~II with $\gamma=1/N$, which corresponds to the case $\alpha=0$. The only difference now, is that we fix a large constant $L$, to be specified later, and we want to build a set $F$ such that
\[
|F|_f -|E|_f = \tilde \eps := \frac \eps L\,.
\]
To do so, the only required change is to define
\[
\bar\delta := \frac{2M\eps^{1-(N-1)\gamma}}{L a^{N-1}}
=\frac{2M\eps^{\frac 1N}}{L a^{N-1}}\,,
\]
which of course reduces to the old choice of $\bar\delta$ if $L=1$. Then, we find again some $\delta$ satisfying
\begin{align}\label{New:tte}
|F|_f-|E|_f = \tilde\eps\,, &&
\frac{\tilde\eps^{\frac 1N}}{2ML^{\frac{N-1}N} a^{N-1}} \leq \delta \leq \frac{2M\tilde\eps^{\frac 1N}}{L^{\frac{N-1}N}a^{N-1}}\,.
\end{align}
which still reduces to~(\ref{tte}) if $L=1$. By~(\ref{New:est:1}) and~(\ref{New:est:2}), recalling~(\ref{New:tte}) and up to take $\rho$ small enough, we know that
\begin{equation}\label{nomestrano1}\begin{split}
\H^{N-1}_f(T_1^+)+ \H^{N-1}_f(T_2^+) &\leq 2^{N+3} N M \big(a \eps^\gamma\big)^{N-2}\delta
=2^{N+3} N M a^{N-2}\tilde\eps^{\frac{N-2}N} L^{\frac{N-2}N} \delta\\
&\leq \frac{2^{N+4} N M^2}{aL^{\frac 1N}}\, \tilde\eps^{\frac{N-1}N}
\leq \frac c 3\, \tilde\eps^{\frac{N-1}N}\,,
\end{split}\end{equation}
where the last inequality holds true up to choose a sufficiently large constant $L$.\par
We have now to evaluate $\H^{N-1}_f(T_3^+)-\H^{N-1}_f(T_3^-)$ and $\H^{N-1}_f(T_4^+)-\H^{N-1}_f(T_4^-)$. If we just insert the new choice of $\delta$ into~(\ref{New:est:3}) and~(\ref{New:est:4}), these two estimates become worse because of the presence of the big constant $L$: in fact, it is now time to use the continuity assumption on $f$. Let us then call $\omega$ the standard continuity modulus of $f$ on $Q^N$, i.e.,
\[
\omega(c) = \sup \Big\{ \big|f(x) - f(y)\big|:\, x,\, y\in Q^N,\, |x-y|\leq c\Big\}\,;
\]
since $f$ is continuous, thus uniformly continuous on $Q^N$, we have that $\omega\searrow 0$ when $c \searrow 0$. We can then easily modify the calculation of~(\ref{New:est:3}) as
\begin{equation}\label{nomestrano2}\begin{split}
\H^{N-1}_f(T_3^+)-\H^{N-1}_f(T_3^-)&\leq \int_{T_3^-} \Big(f(\pi'(y))-f(y)\Big)\,d\H^{N-1}(y)
\leq \omega(\delta)\H^{N-1}\big(T_3^-\big)\\
&\leq \omega(\delta) \H^{N-1}\big(\partial^*E \cap S_{\bar i}\big)
\leq \frac{2^{N+7} M \rho}a \,\tilde\eps \omega(\delta) 
\leq \frac c 3\, \tilde\eps^{\frac{N-1}N}\,,
\end{split}\end{equation}
where the last inequality is true if $\tilde\eps$ is small enough. Finally, the estimate~(\ref{New:est:4}) now becomes
\begin{equation}\label{bisnum}\begin{split}
\H^{N-1}_f(T_4^+)-\H^{N-1}_f(T_4^-)&= \int_{T_4^-} \Big(f(x',x_N+\delta))-f(x',x_N)\Big)\,d\H^{N-1}(x)\\
&\leq \omega(\delta) \H^{N-1}(T_4^-)
\leq \omega(\delta) \big(a\eps^\gamma\big)^{N-1} \big(1+2^{N+4}\cdot 8\rho\big)\\
&\leq 2 L^{\frac{N-1}N} a^{N-1} \tilde \eps^{\frac{N-1}N}\, \omega(\delta)
\leq 2 L^{\frac{N-1}N} a^{N-1} \tilde \eps^{\frac{N-1}N}\, \omega\bigg(\frac{2M\tilde\eps^{\frac 1N}}{L^{\frac{N-1}N}a^{N-1}}\bigg)\,,\hspace{-20pt}
\end{split}
\end{equation}
using also~(\ref{New:tte}). We can select a sufficiently small $\bar\eps$ such that, whenever $0<\tilde\eps<\bar\eps$, one has
\[
\omega\bigg(\frac{2M\tilde\eps^{\frac 1N}}{L^{\frac{N-1}N}a^{N-1}}\bigg) \leq \frac{c}{6 L^{\frac{N-1}N} a^{N-1}}
\]
(recall that $L$ has been already fixed). As a consequence, (\ref{bisnum}) yields
\begin{equation}\label{nomestrano3}
\H^{N-1}_f(T_4^+)-\H^{N-1}_f(T_4^-) \leq \frac c 3\, \tilde\eps^{\frac{N-1}N}\,.
\end{equation}
Summarizing, for any $c>0$ we have found $0<\bar\eps\ll 1$ such that, for any $0<\tilde\eps<\bar\eps$, there exists $F$ which, by~(\ref{nomestrano1}), (\ref{nomestrano2}) and~(\ref{nomestrano3}), satisfies
\begin{align*}
|F|_f-|E|_f = \tilde\eps\,, && 
P_f(F) - P_f(E) \leq c \tilde\eps^{\frac{N-1}N}\,.
\end{align*}
Arguing in the usual way to treat the case of $\tilde\eps<0,\, |\tilde\eps|<\bar\eps$, we have then concluded the validity of the $\eps-\eps^{\frac{N-1}N}$ property with any constant $c>0$.
\end{proof}

\begin{remark}
We underline that, in Theorem~\mref{eps-epsbeta}, the validity of the $\eps-\eps^\beta$ property with any small constant is a peculiarity of the case when $f$ is continuous, but it cannot be inferred for the general case of a $\alpha$-H\"older function $f$.
\end{remark}

\section{An example of an unbounded isoperimetric set\label{example}}

This section is devoted to show an example of an essentially bounded but discontinuous function $f$ which admits an unbounded isoperimetric set. This will show the sharpness of the assumption in Theorem~\mref{eps-epsbeta}.\par

Let $\big\{ B_i \big\}_{i\in\N}$ be a sequence of disjoint balls of Euclidean volume $\big|B_i\big|_\eu=1/2^i$, sufficiently far from each other, and let $f:\R^N\to \R$ be defined as
\[
f(x):= \left\{ \begin{array}{ll}
1\qquad & \hbox{if $x\in \bigcup_{\,i} \partial B_i$}\,, \\
M & \hbox{otherwise}\,, \\
\end{array}\right.
\]
where $M>1$ is a constant, big enough, to be precised later. We will show that $B=\cup_i B_i$, which is an unbounded set, is the unique isoperimetric set of volume $M$. To do so, let us pick a smooth set $E\subseteq \R^N$ with $|E|_f=M$; we aim to show that, for a suitable constant $\xi>0$, one has
\begin{equation}\label{toget}
P_f(E) \geq P_f(B) + \xi \eta^{\frac{N-1}N}\,, \qquad\hbox{where}\quad\eta:=\frac{\big| B \triangle E\big|_\eu}2\,,
\end{equation}
being $B \triangle E = (B\setminus E) \cup (E \setminus B)$ the symmetric difference between $E$ and $B$. By the density of smooth sets among sets of finite perimeter, (\ref{toget}) will show that $B$ is the unique isoperimetric set of volume $M$.

\begin{claim}
It is admissible to assume that every connected component of $E$ intersects exactly one of the balls $B_i$.
\end{claim}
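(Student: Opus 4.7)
The plan is to reduce the proof of (\ref{toget}) to the nice case by showing that any bad configuration of $E$ can be modified into a good one at negligible (or even negative) cost. Concretely, I will produce from any smooth $E$ with $|E|_f = M$ a competitor $E'$ with $|E'|_f = M$, whose components each meet exactly one ball $B_i$, and such that
\[
P_f(E) - P_f(E') \;\ge\; \xi\bigl(\eta^{\frac{N-1}{N}} - (\eta')^{\frac{N-1}{N}}\bigr),
\]
which makes (\ref{toget}) propagate from $E'$ to $E$. A bad component is of one of two types: (a) it meets no ball, or (b) it meets $k\ge 2$ balls.

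For a component $C$ of type (a), the point is that $C\subseteq\{f=M\}$, so $|C|_f=M|C|_\eu$ and $P_f(C)=MP_\eu(C)\ge MN\omega_N^{1/N}|C|_\eu^{(N-1)/N}$ by the Euclidean isoperimetric inequality. Since $|E|_\eu = |E|_f/M = 1$ is finite while the balls $B_j$ are infinitely many and very far from each other, I can select an unused ball $B_{j_0}$ (with $B_{j_0}\cap E=\emptyset$) and replace $C$ by a set $D$ of Euclidean volume $|C|_\eu$ placed so that as much of $\partial D$ as possible lies on $\partial B_{j_0}$ (where $f=1$); if $|C|_\eu$ exceeds every available $|B_{j_0}|_\eu$, I distribute the volume over several such unused balls. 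The saving $P_f(C)-P_f(D)$ is then of order $M$ times $N\omega_N^{1/N}|C|_\eu^{(N-1)/N}$, which dominates $\xi|C|_\eu^{(N-1)/N}$ once $M$ is taken large enough (as we are free to do). On the other hand, this operation only shrinks the symmetric difference, $|B\triangle E'|_\eu = |B\triangle E|_\eu - 2|C|_\eu$, and the concavity inequality $(\eta-|C|_\eu)^{(N-1)/N}+|C|_\eu^{(N-1)/N}\ge\eta^{(N-1)/N}$ then closes the loop in favor of~(\ref{toget}) for~$E$.

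For a component $C$ of type (b), meeting balls $B_{j_1},\dots,B_{j_k}$, the hypothesis that the balls are placed ``sufficiently far from each other'' --- at distances much larger than the diameter of any set of Euclidean volume at most $1$ --- forces $C$ to contain expensive bridges through $\{f=M\}$ joining the different balls. I would cut $C$ with hyperplanes located in the bridge regions and selected by a standard coarea argument so that each $\H^{N-1}$ cross-section is arbitrarily small compared to the eliminated bridge perimeter, producing $k$ pieces each meeting a single ball and with strictly smaller total weighted perimeter. After this reduction one returns to the setting where each component meets exactly one ball, and the concavity bookkeeping above again transfers~(\ref{toget}) from $E'$ to $E$.

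The main obstacle is the quantitative side of case (b): checking that for every possible geometry of $C$ the coarea-selected cuts are much cheaper than the bridges they replace. This is handled by exploiting the freedom to choose the inter-ball distances as large as we please in the construction of the sequence $\{B_i\}$, together with the Euclidean isoperimetric inequality applied to the bridge pieces.
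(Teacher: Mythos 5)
Your decomposition into the two bad cases (a component meeting no ball, a component meeting $\ge 2$ balls) matches the paper, but the treatment of both cases diverges, and the treatment of case~(a) is where the proposal breaks down.

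\textbf{Case (a) is flawed.} You propose to replace a ball-free component $C$ by a set $D$ lodged inside an unused $B_{j_0}$, chosen so that $\partial D$ lies mostly on $\partial B_{j_0}$. Two problems. First, an unused ball need not exist: although $|E|_\eu=1=\sum_i|B_i|_\eu$ and the balls are infinitely many, $E$ could intersect \emph{every} $B_i$ in a small cap, so $B_{j_0}\cap E=\emptyset$ can fail. Second, and more seriously, the claimed perimeter saving of order $M$ does not hold. If $D$ is a spherical cap of volume $v=|C|_\eu$, its flat face has $\H^{N-1}$-area comparable to $v^{\frac{N-1}{N+1}}$, which for small $v$ is much \emph{larger} than the isoperimetric bound $N\omega_N^{1/N}v^{\frac{N-1}{N}}$ that controls $P_\eu(C)$ from below. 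Since the flat face sits in the region $\{f=M\}$, one gets $P_f(D)\gtrsim Mv^{\frac{N-1}{N+1}}\gg Mv^{\frac{N-1}{N}}\gtrsim P_f(C)$ when $v$ is small: the replacement can \emph{increase} the weighted perimeter. The same difficulty persists if $D$ is a ball inside $B_{j_0}$ (then $P_f(D)=MP_\eu(D)$, with no saving at all), so your ``concavity bookkeeping'' inequality $P_f(E)-P_f(E')\ge\xi\big(\eta^{\frac{N-1}{N}}-(\eta')^{\frac{N-1}{N}}\big)$ fails, e.g.\ when $C$ is already a ball. The paper avoids all of this: since $f\equiv M$ on $\R^N\setminus\overline B$, a ball-free component can simply be \emph{translated} until it touches some $B_i$ or some ball-touching component, leaving both $|E|_f$ and $P_f(E)$ unchanged and $\eta$ unchanged, so \eqref{toget} transfers for free.

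\textbf{Case (b) has a gap.} Cutting bridges with coarea-selected hyperplanes removes volume that must be restored to keep $|E'|_f=M$, and you do not say how. The paper replaces the excised bridge by a disjoint ball of the same Euclidean volume placed away from $B\cup E$ (so $\eta$ is preserved), and invokes the computation from the proof of Theorem~\mref{boundedness} to guarantee that the cut perimeter plus the new ball's perimeter is less than the eliminated bridge perimeter once the inter-ball distances are large. Your sketch is close in spirit here, but without the volume-restoring ball and the resulting $\eta$-preservation the reduction is not complete (and the replacement ball is itself ball-free, so case~(a) must then be applied to it, which is yet another reason it has to be handled correctly).
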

\begin{proof}
Assume first that a connected component of $E$ intersects two different balls. We argue exactly as in the proof of Theorem~\mref{boundedness}: one can build a competitor $\widetilde E$ by cutting the part of $E$ which is far enough from each of the balls, and replacing it with a ball of the same volume (not intersecting $B\cup E$); the very same calculation done in the proof of Theorem~\mref{boundedness} ensures that, if the balls are chosen sufficiently far from each other, there will be such a set $\widetilde E$ having perimeter smaller than that of $E$. Since by construction $\big| B\triangle E\big|_f=\big| \widetilde B\triangle E\big|_f$, we can reduce ourselves to show~(\ref{toget}) for the set $\widetilde E$; this shows that it is admissible to assume that no connected component of $E$ intersects two different balls.\par
Suppose now, instead, that there is a connected component of $E$ which does not intersect any ball; then, we can simply translate this component around $\R^N$ until it touches one of the balls $B_i$, or one of the other connected components which in turn touches some ball. Since the density is constant in $\R^N\setminus B$, this translation does not effect the perimeter nor the volume of $E$, hence it is admissible to assume that every connected component of $E$ intersects at least one ball. The proof of the claim is then concluded.
\end{proof}

Thanks to the above claim, for every $i$ we can consider the (possibly empty) connected component of $E$ which intersects $B_i$, and subdivide it into two parts, the set $E_i\subseteq B_i$ and the remaining set $F_i$, as depicted in Figure~\ref{fig:ex}. We call now
\begin{align*}
\eps_i:= \big|B_i\setminus E_i\big|_\eu \,, &&
\delta_i:=\big|F_i\big|_\eu\,,
\end{align*}
and notice that
\begin{equation}\label{reclat}
\sum_i \eps_i = \sum_i \delta_i = \eta
\end{equation}
by the definition of $\eta$ in~(\ref{toget}) and since $|E|_f=|B|_f=M$.
\begin{figure}[htbp]
\input{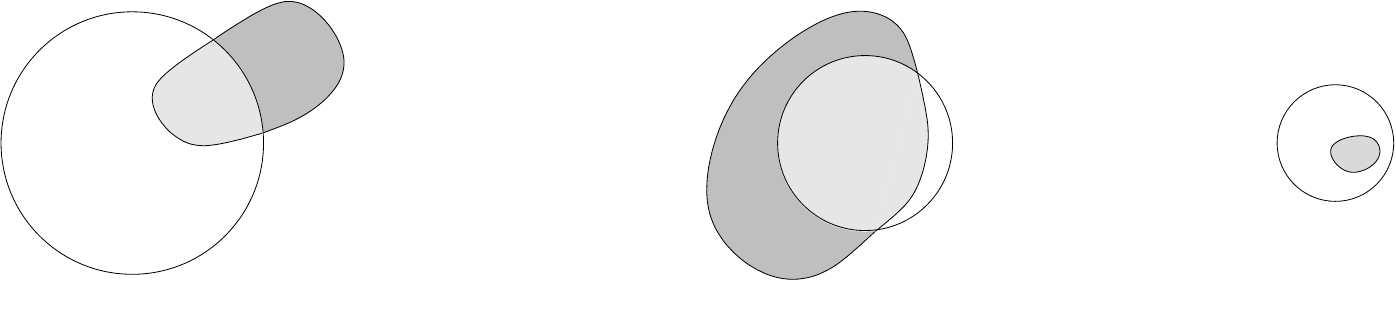_t}
\caption{Sketch of the situation in the Example of Section~\ref{example}; the set $F_3$ is empty.}\label{fig:ex}
\end{figure}
Our next observation is the following.
\begin{claim}\label{cl2}
For every $i\in\N$ one has
\[
P_f(B_i\cup F_i) - P_f(B_i) \geq \frac{M-1}2\, N \omega_N^{1/N} \delta_i^{\frac{N-1}N}\,.
\]
\end{claim}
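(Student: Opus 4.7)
The plan is to compute $P_f(B_i\cup F_i)-P_f(B_i)$ exactly, and then to invoke the Euclidean isoperimetric inequality twice, applied to two different sets, to control the two naturally arising quantities.

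First I would identify the perimeter change. Since $B_i\cap F_i=\emptyset$ with $F_i$ lying outside $\overline{B_i}$, the reduced boundary of the union splits as
\[
\partial^*(B_i\cup F_i)=\bigl(\partial B_i\setminus \overline{F_i}\bigr)\cup\bigl(\partial^* F_i\setminus\overline{B_i}\bigr),
\]
the interface $S:=\partial B_i\cap\partial^* F_i$ being internal to $B_i\cup F_i$ and hence disappearing. Writing $s:=\H^{N-1}(S)$ and $\sigma:=\H^{N-1}(\partial^* F_i\setminus\overline{B_i})$, and using that $f\equiv 1$ on $\partial B_i$ while $f\equiv M$ on $\partial^*F_i\setminus\overline{B_i}$ (the balls being so far apart that the component of $E$ meeting $B_i$ cannot reach any other sphere $\partial B_j$), I obtain the clean identity
\[
P_f(B_i\cup F_i)-P_f(B_i)=M\sigma-s.
\]

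The rest is a matter of bounding $M\sigma - s$ below in terms of $\delta_i$, and for this two estimates suffice. Applying the Euclidean isoperimetric inequality to $F_i$, whose Euclidean perimeter is exactly $\sigma+s$, gives
\[
\sigma+s\geq N\omega_N^{1/N}\delta_i^{(N-1)/N}.
\]
Applying it instead to the whole union $B_i\cup F_i$, whose Euclidean perimeter equals $\H^{N-1}(\partial B_i)-s+\sigma$ and whose Euclidean volume is $|B_i|_\eu+\delta_i\geq|B_i|_\eu$, and recalling that the ball $B_i$ is already extremal for its own volume, yields
\[
\H^{N-1}(\partial B_i)-s+\sigma\geq \H^{N-1}(\partial B_i),\quad\text{i.e.}\quad\sigma\geq s.
\]
The conclusion then becomes purely algebraic:
\[
M\sigma-s=\frac{M-1}{2}(\sigma+s)+\frac{M+1}{2}(\sigma-s)\geq\frac{M-1}{2}(\sigma+s)\geq\frac{M-1}{2}\,N\omega_N^{1/N}\,\delta_i^{(N-1)/N},
\]
which is the claimed estimate.

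The only delicate point is the decomposition of $\partial^*(B_i\cup F_i)$: one must verify in the $\H^{N-1}$-a.e. sense that the outer part of $\partial^* F_i$ indeed avoids every other sphere $\partial B_j$ (so that $f\equiv M$ there), and that no further piece of $\partial^* F_i$ charges $\partial B_i$ away from $S$. Both facts are immediate from the preceding claim (every component of $E$ meets exactly one $B_i$) together with the freedom to place the balls arbitrarily far apart; once these are checked, the argument reduces entirely to the standard Euclidean isoperimetric inequality and a one-line algebraic manipulation.
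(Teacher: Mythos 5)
Your proof is correct and follows the same overall skeleton as the paper's — reduce to bounding $M\sigma - s$ with $\sigma := \H^{N-1}(\partial F_i\setminus\overline{B_i})$, $s := \H^{N-1}(\partial F_i\cap\partial B_i)$, apply the Euclidean isoperimetric inequality to $F_i$ to get $\sigma+s \geq N\omega_N^{1/N}\delta_i^{(N-1)/N}$, and combine. The one place where you genuinely diverge is the derivation of $\sigma \geq s$. The paper obtains it geometrically: the nearest-point projection $\pi$ onto the convex ball $\overline{B_i}$ is $1$-Lipschitz, so $\H^{N-1}(\Gamma_1)\geq\H^{N-1}(\pi(\Gamma_1))\geq\H^{N-1}(\Gamma_2)$. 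You instead apply the Euclidean isoperimetric inequality a second time, to $B_i\cup F_i$: since $|B_i\cup F_i|_\eu\geq|B_i|_\eu$ and $B_i$ is extremal for its own volume, $\H^{N-1}(\partial B_i)-s+\sigma\geq\H^{N-1}(\partial B_i)$, forcing $\sigma\geq s$. Your route has the mild advantage of avoiding the projection/covering step (where one implicitly needs $\pi(\Gamma_1)\supseteq\Gamma_2$ up to a null set) and using nothing beyond the isoperimetric inequality itself; the paper's route is perhaps more directly geometric and makes no use of the fact that $B_i$ is a ball rather than an arbitrary convex body. Both require the same (smooth, up to $\H^{N-1}$-null set) decomposition of $\partial^*(B_i\cup F_i)$, which you state and justify correctly, and the final algebraic rearrangement $M\sigma-s=\tfrac{M-1}{2}(\sigma+s)+\tfrac{M+1}{2}(\sigma-s)$ is equivalent to the paper's $M\sigma-s\geq(M-1)\sigma\geq\tfrac{M-1}{2}(\sigma+s)$.
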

\begin{proof}
First of all, we know by the Euclidean isoperimetric inequality that
\begin{equation}\label{almlu}
\H^{N-1}\big(\partial F_i\big) = P_\eu(F_i)\geq N \omega_N^{1/N} \big| F_i \big|_\eu^{\frac{N-1}N}
= N \omega_N^{1/N} \delta_i^{\frac{N-1}N}\,.
\end{equation}
Consider now the boundary of $F_i$: it is done by two parts, namely, $\Gamma_1=\partial F_i\setminus \overline B_i$ and $\Gamma_2=\partial F_i\cap\partial B_i$. Call now $\pi:\R^N\setminus \overline B_i\to \partial B_i$ the projection on $B_i$: since the ball in convex, $\pi$ is $1$-Lipschitz; therefore,
\[
\H^{N-1} ( \Gamma_1 ) \geq \H^{N-1} \big( \pi ( \Gamma_1 )\big) \geq \H^{N-1} ( \Gamma_2 )\,.
\]
As a consequence,
\[\begin{split}
P_f(B_i\cup F_i) - P_f(B_i) &= \H^{N-1}_f \big(\Gamma_1\big) - \H^{N-1}_f \big(\Gamma_2\big)
= M \H^{N-1} \big(\Gamma_1\big) - \H^{N-1} \big(\Gamma_2\big)\\
&\geq (M-1) \H^{N-1} \big(\Gamma_1\big)
\geq \frac{M-1}2\, \H^{N-1} \big(\partial F_i\big)\\
&\geq \frac{M-1}2\, N \omega_N^{1/N} \delta_i^{\frac{N-1}N}\,,
\end{split}\]
recalling that $\partial F_i = \Gamma_1\cup \Gamma_2$ and~(\ref{almlu}). The proof of the claim is then concluded.
\end{proof}

As an immediate corollary, just adding over $i\in\N$, using the concavity of $t\mapsto t^{\frac{N-1}N}$, and recalling~(\ref{reclat}), we get
\begin{equation}\label{tocomp}\begin{split}
\sum_{i\in\N} P_f(B_i\cup F_i) - P_f(B)&=
\sum_{i\in\N} \Big(P_f(B_i\cup F_i) -P_f(B_i)\Big)\geq\frac{M-1}2\,N\omega_N^{1/N}\sum_{i\in\N}\delta_i^{\frac{N-1}N}\\
&\geq \frac{M-1}2\, N \omega_N^{1/N} \Big(\sum\nolimits_{i\in\N} \delta_i\Big)^{\frac{N-1}N}
= \frac{M-1}2\, N \omega_N^{1/N} \eta^{\frac{N-1}N}\,.
\end{split}\end{equation}
Now, since a quick observation tells us that
\begin{equation}\label{quickobs}
P_f(E_i\cup F_i)-P_f(B_i\cup F_i) \geq P_f(E_i)-P_f(B_i)\,,
\end{equation}
and thanks to~(\ref{tocomp}), we are basically reduced to evaluate $P_f(E_i)-P_f(B_i)$ in terms of $\eps_i$. We can start with an easy bound.
\begin{claim}\label{cleps}
For every $i\in\N$, one has
\[
P_f(E_i)-P_f(B_i) \geq -N\omega_N^{1/N} \eps_i^{\frac{N-1}N}\,.
\]
\end{claim}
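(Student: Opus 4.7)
The plan is to reduce the claim to the Euclidean isoperimetric inequality applied to $E_i$ itself, using two very simple facts about $f$: that $f\equiv 1$ on $\partial B_i$, so the weighted perimeter of $B_i$ coincides with its Euclidean perimeter; and that $f\geq 1$ everywhere, so for any subset of $B_i$ the weighted perimeter dominates the Euclidean one. Both ingredients point in the ``right'' direction, which is what makes the estimate work even though the density is large.

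Concretely, I would split the reduced boundary of $E_i$, which lies inside $\overline{B_i}$ since $E_i\subseteq B_i$, as $\partial^*E_i=\Sigma_1\cup\Sigma_2$, where $\Sigma_1:=\partial^*E_i\setminus\partial B_i$ (on which $f\equiv M$) and $\Sigma_2:=\partial^*E_i\cap\partial B_i$ (on which $f\equiv 1$). Then
\[
P_f(E_i)=M\H^{N-1}(\Sigma_1)+\H^{N-1}(\Sigma_2)\geq \H^{N-1}(\Sigma_1)+\H^{N-1}(\Sigma_2)=P_\eu(E_i),
\]
while $P_f(B_i)=\H^{N-1}(\partial B_i)=N\omega_N^{1/N}|B_i|_\eu^{(N-1)/N}=P_\eu(B_i)$. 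Hence $P_f(E_i)-P_f(B_i)\geq P_\eu(E_i)-P_\eu(B_i)$, and the Euclidean isoperimetric inequality applied to $E_i$, whose Euclidean volume equals $|B_i|_\eu-\eps_i$, yields
\[
P_\eu(E_i)-P_\eu(B_i)\geq N\omega_N^{1/N}\bigl[(|B_i|_\eu-\eps_i)^{\frac{N-1}N}-|B_i|_\eu^{\frac{N-1}N}\bigr].
\]

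The claim then follows from the elementary subadditivity of the concave power $t\mapsto t^{(N-1)/N}$ (which vanishes at $0$): for every $a\geq\eps\geq 0$, $(a-\eps)^{(N-1)/N}\geq a^{(N-1)/N}-\eps^{(N-1)/N}$. Applying this with $a=|B_i|_\eu$ and $\eps=\eps_i$ gives exactly the desired bound. I do not anticipate any serious obstacle; the only mildly delicate point is verifying that $\partial^*E_i$ really has no $\H^{N-1}$-mass outside $\overline{B_i}$, so that the decomposition above is an $\H^{N-1}$-partition, but this is immediate from $E_i\subseteq B_i$.
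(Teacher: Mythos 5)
Your proof is correct and essentially identical to the paper's: both rest on the chain $P_f(E_i)-P_f(B_i)\geq P_\eu(E_i)-P_\eu(B_i)$, the Euclidean isoperimetric inequality applied to $E_i$, and the concavity/subadditivity of $t\mapsto t^{(N-1)/N}$. The only difference is that you spell out the $\Sigma_1,\Sigma_2$ decomposition justifying the first inequality, which the paper takes as immediate.
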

\begin{proof}
The Euclidean isoperimetric inequality tells us that
\[
P_\eu (E_i) \geq N\omega_N^{1/N} |E_i|_\eu^{\frac{N-1}N}
=N\omega_N^{1/N} \Big(|B_i|_\eu-\eps_i\Big)^{\frac{N-1}N}\,;
\]
therefore, again by the concavity of $t\mapsto t^{\frac{N-1}N}$, we have
\[\begin{split}
P_f(E_i)- P_f(B_i) &\geq P_\eu(E_i)-P_\eu(B_i)
\geq N\omega_N^{1/N} \bigg( \Big(|B_i|_\eu-\eps_i\Big)^{\frac{N-1}N} - |B_i|_\eu^{\frac{N-1}N} \bigg)\\
&\geq -N\omega_N^{1/N}\eps_i^{\frac{N-1}N}\,.
\end{split}\]
\end{proof}
Unfortunately, we cannot simply conclude comparing~(\ref{tocomp}) and last claim, because the concavity of $t\mapsto t^{\frac{N-1}N}$ now works against our estimate, since $\sum -\eps_i^{\frac{N-1}N}\leq -\eta^{\frac{N-1}N}$. To overcome this problem, we subdivide $\N$ into two parts, namely,
\begin{align*}
I := \bigg\{ i\in\N:\, \frac{\eps_i}{\big| B_i\big|_\eu} \leq \frac 14 \bigg\} \,, &&
J :=\bigg\{ i\in\N:\, \frac{\eps_i}{\big| B_i\big|_\eu} > \frac 14 \bigg\} \,;
\end{align*}
in words, the indices belonging to $I$ (resp., $J$) are those corresponding to sets $E_i$ which contain more (resp., less) than $75\%$ of the corresponding ball $B_i$. The key point is that for indices in $I$ something much stronger than Claim~\ref{cleps} can be found.
\begin{claim}\label{claimI}
For every $i\in I$, one has $P_f(E_i)\geq P_f(B_i)$.
\end{claim}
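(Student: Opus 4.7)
The plan is to write $D_i:=B_i\setminus E_i$, which satisfies $|D_i|_\eu=\eps_i\leq |B_i|_\eu/4$ by the hypothesis $i\in I$, and to split its reduced boundary as $\partial^*D_i=A\cup S_2$, with $A:=\partial^*D_i\cap \partial B_i$ of $\H^{N-1}$-measure $a$ and $S_2:=\partial^*D_i\cap \mathrm{int}(B_i)$ of $\H^{N-1}$-measure $b$. Since $f\equiv 1$ on $\partial B_i$ and $f\equiv M$ elsewhere, a direct computation using $\partial^*E_i=(\partial B_i\setminus A)\cup S_2$ gives $P_f(E_i)-P_f(B_i)=Mb-a$, so the claim reduces to establishing $Mb\geq a$ for $M$ large enough, depending only on $N$.

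The main tool I would use is a radial projection centered at the center $c_i$ of $B_i$ (translating so $c_i=0$). Identify $A$ with $\tilde A:=A/r_i\subseteq \S^{N-1}$ (so $a=r_i^{N-1}\H^{N-1}(\tilde A)$). For $\H^{N-1}$-a.e.\ $\omega\in\tilde A$, the radial slice of $D_i$ in direction $\omega$ has $r_i$ as a density point by Vol'pert's theorem, so there is a largest $s(\omega)<r_i$ with $s(\omega)\omega\notin D_i$; the point $s(\omega)\omega$ then lies in $S_2$, and the map $\omega\mapsto s(\omega)\omega$ injectively parametrizes a subset of $S_2$. The area formula, whose Jacobian for such a radial parametrization equals $s^{N-2}\sqrt{s^2+|\nabla s|^2}\geq s^{N-1}$, yields
\[
b\geq \int_{\tilde A} s(\omega)^{N-1}\,d\H^{N-1}(\omega),
\]
while the truncated cone $\{t\omega:\omega\in\tilde A,\ s(\omega)<t<r_i\}\subseteq D_i$ has volume $\frac1N\int_{\tilde A}(r_i^N-s^N)\,d\H^{N-1}\leq \eps_i$. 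Combining the two bounds via $s^N\leq r_i s^{N-1}$ yields the key estimate
\[
b\geq a-\frac{N\eps_i}{r_i}.
\]

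I would then conclude by a two-case analysis. If $a\geq 2N\eps_i/r_i$, the estimate above directly gives $b\geq a/2$, so any $M\geq 2$ yields $Mb\geq a$. In the opposite regime $a<2N\eps_i/r_i$, the bound $\eps_i\leq \omega_N r_i^N/4$ produces $a\leq 2N(\omega_N/4)^{1/N}\eps_i^{(N-1)/N}$, while the scale-invariant relative isoperimetric inequality in the ball (valid since $|D_i|_\eu\leq |B_i|_\eu/2$) gives $b\geq c_N\eps_i^{(N-1)/N}$ for a dimensional constant $c_N$; hence $a/b$ is bounded by a dimensional constant. Fixing $M$ larger than both $2$ and this second bound then yields $Mb\geq a$ in every case. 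The main technical obstacle is the BV-level justification of the radial projection step---existence of $s(\omega)$ with $s(\omega)\omega\in\partial^*D_i\cap\mathrm{int}(B_i)$ for a.e.\ $\omega$, injectivity of the parametrization, the cone inclusion in $D_i$, and the area-formula lower bound on the Jacobian---all of which follow from Vol'pert's theorem applied to radial slices together with the standard area formula, but the bookkeeping requires some care.
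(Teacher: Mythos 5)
Your decomposition $\partial^* E_i = (\partial B_i \setminus A) \cup S_2$ and the identity $P_f(E_i)-P_f(B_i)=Mb-a$ are correct, and the overall plan is sound, so the argument does prove the claim for $M$ large depending only on $N$. However, it follows a genuinely different route than the paper. The paper's proof first applies the spherical rearrangement of Theorem~\ref{sphsym} (using that $f$ is radially symmetric near $B_i$) to replace $E_i$ by a set $E^*$ whose intersection with $\partial B_i$ is a spherical cap $\Gamma_2$; then, letting $P$ be the flat disc spanning $\partial\Gamma_2$, it distinguishes whether $\H^{N-1}(\Gamma_2)$ is at least or less than half of $\H^{N-1}(\partial B_i)$, and in either case concludes by elementary geometry (a $1$-Lipschitz projection onto $P$ in the first case, and the Euclidean isoperimetric inequality combined with $i\in I$ in the second). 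You instead work directly with $D_i = B_i\setminus E_i$, without symmetrizing: the radial area-formula estimate $b\geq\int_{\tilde A}s(\omega)^{N-1}$ together with the truncated-cone volume bound gives the quantitative inequality $b\geq a - N\eps_i/r_i$, and you then split according to whether $a$ is large or small compared with $N\eps_i/r_i$, invoking the scale-invariant relative isoperimetric inequality in $B_i$ to handle the second regime. What symmetrization buys the paper is a very concrete geometry (a spherical cap) so the whole argument stays at the level of elementary convexity and Lipschitz projections; what your approach buys is independence from the rearrangement machinery (no need for Theorem~\ref{sphsym}), at the cost of the radial Vol'pert slicing and area-formula bookkeeping that you yourself flag. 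Two small points worth tightening if you were to write it out: (i) the ``largest $s(\omega)<r_i$ with $s(\omega)\omega\notin D_i$'' can equal $0$ when the whole ray lies in $D_i$; this is harmless (it contributes $0$ to the lower bound on $b$ and at most $r_i^N/N$ to the cone volume, which is controlled) but should be said; and (ii) rather than using the area formula directly with the possibly irregular function $s$, it is cleaner to push forward through the radial projection $\phi(x)=x/|x|$, whose tangential Jacobian on any $(N-1)$-plane satisfies $J_\tau\phi(x)\leq|x|^{1-N}$, giving $\int_{\tilde A}s^{N-1}\,d\H^{N-1}=\int_{S'}|x|^{N-1}J_\tau\phi\,d\H^{N-1}\leq\H^{N-1}(S')\leq b$ without needing $s$ to be Lipschitz.
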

\begin{proof}
First of all, we consider the spherical symmetrization $E^*$ of $E_i$, which satisfies $P_f(E^*)\leq P_f(E_i)$ by Theorem~\ref{sphsym} (and since $f$ is obviously radial in a neighborhood of $B_i$); notice that it is also $|E^*|_\eu=|E|_\eu$, since $f$ is constant in $B_i$. Then, we write $\partial E^*=\Gamma_1\cup \Gamma_2$, where $\Gamma_1=\partial E^*\cap B_i$ and $\Gamma_2=\partial E^*\cap \partial B_i$; in particular, $\Gamma_2$ is a (possibly empty) spherical cap in $B_i$. Let us then call $P$ the $(N-1)$-dimensional ball whose boundary coincides with the boundary of $\Gamma_2$ as a subset of $\partial B_i$. In other words, since $\Gamma_2$ is a spherical cap then up to a rotation one has $\Gamma_2 = \big\{ x \in \partial B_i:\, x \cdot \nu \leq \kappa \big\}$ for suitable $\kappa\in\R$ and $\nu\in\S^{N-1}$, we define the ball $P=\big\{ x \in B_i:\, x \cdot \nu = \kappa \big\}$; the situation is depicted in Figure~\ref{fig:cl3}. We have now to distinguish two cases.
\begin{figure}[htbp]
\input{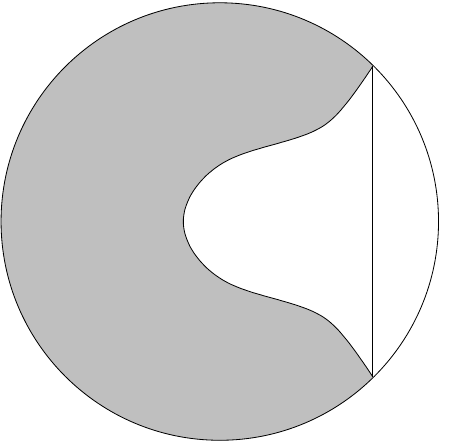_t}
\caption{Situation in Claim~\ref{claimI}.}\label{fig:cl3}
\end{figure}
\case{I}{$\H^{N-1}(\Gamma_2)\geq \H^{N-1}(\partial B_i)/2$.}
In this case, as in Claim~\ref{cl2} we call $\pi$ the projection over $P$, which is $1$-Lipschitz by the convexity of $P$; then, we can again estimate
\[
\H^{N-1}(\Gamma_1) \geq \H^{N-1}\big( \pi(\Gamma_1)\big) \geq \H^{N-1}(P)\,.
\]
Moreover, since $\H^{N-1}(\Gamma_2)\geq \H^{N-1}(\partial B_i)/2$, then
\[
\H^{N-1}\big(\partial B_i\setminus \Gamma_2 \big) \leq \frac{N\omega_N}{2\omega_{N-1}} \,\H^{N-1} (P)
\leq N \H^{N-1} (P)\,.
\]
Putting these two observation together, and assuming without loss of generality that $M\geq N$, one directly gets
\[\begin{split}
P_f(E_i)&\geq P_f(E^*) 
= M \H^{N-1} (\Gamma_1) + \H^{N-1}(\Gamma_2)\\
&\geq M \H^{N-1} (P) + \H^{N-1}(\partial B_i) - \H^{N-1}(\partial B_i \setminus \Gamma_2)
\geq \H^{N-1}(\partial B_i) = P_f(B_i)\,,
\end{split}\]
hence we have concluded in this case (without even using the assumption that $i\in I$).

\case{II}{$\H^{N-1}(\Gamma_2)<\H^{N-1}(\partial B_i)/2$.}
In this case, the Euclidean isoperimetric inequality and the fact that $i\in I$ tell us that
\[
P_\eu (E^*) \geq N\omega_N^{1/N} |E^*|_\eu^{\frac{N-1}N} = N\omega_N^{1/N} \big(|B_i|_\eu-\eps_i\big)^{\frac{N-1}N}
\geq \frac 34\, N\omega_N^{1/N} |B_i|_\eu^{\frac{N-1}N} = \frac 34\, P_f(B_i)\,.
\]
On the other hand, the assumption of this case gives
\[
P_\eu (E) = \H^{N-1}(\Gamma_1)+ \H^{N-1}(\Gamma_2) \leq \H^{N-1}(\Gamma_1) + \frac 12\, P_f(B_i)\,.
\]
The two preceding estimates imply $\H^{N-1}(\Gamma_1) \geq \frac 14\, P_f(B_i)$, which in turn yields
\[
P_f(E) \geq M \H^{N-1}(\Gamma_1) \geq P_f(B_i)\,,
\]
as soon as $M\geq 4$. The proof is then concluded also for this last case.
\end{proof}

The next step is to observe what happens for indices in $J$.
\begin{claim}\label{lastclaim}
One has $\sum_{i\in J} \Big(P_f (E_i ) - P_f(B_i)\Big)\geq - C(N) \eta^{\frac{N-1}N}$, where $C(N)$ is a purely dimensional constant.
\end{claim}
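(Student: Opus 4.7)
The plan is to combine the per-index estimate of Claim~\ref{cleps} with two observations: first, that for indices in $J$ the values $\eps_i$ cannot be too small compared with $|B_i|_\eu$; second, that since $\sum_i \eps_i=\eta$, no single $\eps_i$ can exceed $\eta$. Together these pin down how large $i$ must be in order to belong to $J$, which in turn controls the geometric tail.

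First, Claim~\ref{cleps} gives directly
\[
\sum_{i\in J}\bigl(P_f(E_i)-P_f(B_i)\bigr)\ \geq\ -N\omega_N^{1/N}\sum_{i\in J}\eps_i^{\frac{N-1}N}\,.
\]
So it suffices to bound $\sum_{i\in J}\eps_i^{\frac{N-1}N}$ by a dimensional constant times $\eta^{\frac{N-1}N}$. The next step is to observe two inequalities valid for every $i\in J$: on one hand, by the very definition of $J$ one has $\eps_i>|B_i|_\eu/4=2^{-(i+2)}$; on the other hand, from~(\ref{reclat}) one gets $\eps_i\leq\eta$. In particular, letting $i_J:=\min J$, these two inequalities combined yield $2^{-(i_J+2)}<\eta$, that is, $2^{-i_J}<4\eta$.

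The last step is to sum a geometric series. Since $\eps_i\leq|B_i|_\eu=2^{-i}$ for every $i$, one has $\eps_i^{\frac{N-1}N}\leq 2^{-i\,\frac{N-1}N}$, so that
\[
\sum_{i\in J}\eps_i^{\frac{N-1}N}\ \leq\ \sum_{i\geq i_J}2^{-i\,\frac{N-1}N}
\ =\ \frac{2^{-i_J\,\frac{N-1}N}}{1-2^{-\frac{N-1}N}}
\ \leq\ \frac{(4\eta)^{\frac{N-1}N}}{1-2^{-\frac{N-1}N}}\,.
\]
Setting
\[
C(N):=\frac{4^{\frac{N-1}N}\,N\omega_N^{1/N}}{1-2^{-\frac{N-1}N}}
\]
then gives the claim. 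The only slightly delicate point is the opening observation that membership in $J$ forces $i$ to be large when $\eta$ is small; once this is in place, the rest is a direct geometric summation, and no further ingredient beyond Claim~\ref{cleps} is needed.
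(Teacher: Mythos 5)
Your proof is correct and takes essentially the same route as the paper's: both exploit the lower bound $\eps_i > 2^{-(i+2)}$ forced by membership in $J$, the upper bound $\eps_i\le 2^{-i}$ coming from $E_i\subseteq B_i$, and the geometric decay of $|B_i|_\eu$ to control $\sum_{i\in J}\eps_i^{(N-1)/N}$ by a constant times $\eta^{(N-1)/N}$. The only cosmetic difference is bookkeeping: the paper first bounds the tail $\sum_{i>j}\eps_i^{(N-1)/N}$ by a dimensional constant times $\eps_j^{(N-1)/N}$ and then uses $\eps_j\le\eta$, whereas you sum the geometric series from $i_J$ directly and then use $2^{-i_J}<4\eta$; the two calculations are interchangeable. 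Your remark that Claim~\ref{cleps} is the only ingredient needed (beyond~\eqref{reclat} and the definition of $J$) is accurate; the paper's citation of Claim~\ref{claimI} at that point is not actually used in the displayed chain of inequalities.
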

\begin{proof}
The claim is emptily true if $J=\emptyset$, then we can directly suppose $J\neq \emptyset$ and call $j$ the smallest element of $J$. A simple calculation, recalling that $\eps_j > 1/(4\cdot 2^j)$, yields
\[
\sum_{i>j} \eps_i^{\frac{N-1}N}
\leq \sum_{i>j} \frac 1{2^{i\frac{N-1}N}}
= \frac{1}{\big(2^{\frac{N-1}N}-1\big)\cdot 2^{j\,\frac{N-1}N}}
\leq C_1(N) \eps_j^{\frac{N-1}N}\,,
\]
where
\[
C_1(N) = \frac{4^{\frac{N-1}N}}{2^{\frac{N-1}N}-1}\,.
\]
Therefore, using Claim~\ref{cleps}, Claim~\ref{claimI} and~(\ref{reclat}), we deduce
\[\begin{split}
\sum_{i\in J} \Big(P_f (E_i ) - P_f(B_i)\Big) 
&\geq -N\omega_N^{1/N} \bigg( \eps_j^{\frac{N-1}N} + \sum_{i\in J,\,i>j} \eps_i^{\frac{N-1}N}\bigg)
\geq -N\omega_N^{1/N} \bigg( \eps_j^{\frac{N-1}N} + \sum_{i>j} \eps_i^{\frac{N-1}N}\bigg)\\
&\geq -N\omega_N^{1/N}\big( C_1(N) +1\big) \eps_j^{\frac{N-1}N}
= -C(N) \eps_j^{\frac{N-1}N}
\geq -C(N) \Big(\sum\nolimits_{i\in\N} \eps_i\Big)^{\frac{N-1}N}\\
&= -C(N) \eta^{\frac{N-1}N}\,.
\end{split}\]
\end{proof}

We are finally in position to conclude. In fact, putting together~(\ref{tocomp}), (\ref{quickobs}), Claims~\ref{cleps}, \ref{claimI} and~\ref{lastclaim}, and assuming without loss of generality that $E$ has finite perimeter, we have
\[\begin{split}
P_f(E) - P_f(B) &= 
\sum_{i\in\N} P_f (E_i \cup F_i) - P_f(B)\\
&=\sum_{i\in\N} \Big(P_f (E_i \cup F_i) - P_f(B_i\cup F_i)\Big)+\sum_{i\in\N} P_f(B_i\cup F_i) - P_f(B)\\
&\geq \sum_{i\in\N} \Big(P_f (E_i ) - P_f(B_i)\Big)+\sum_{i\in\N} P_f(B_i\cup F_i) - P_f(B)\\
&\geq \sum_{i\in J} \Big(P_f (E_i ) - P_f(B_i)\Big)+\frac{M-1}2\, N \omega_N^{1/N} \eta^{\frac{N-1}N}
\geq \xi \eta^{\frac{N-1}N}\,,
\end{split}\]
where $\xi>0$ if $M$ is big enough. We have then established the validity of~(\ref{toget}), thus it is definitively proved that $B$ is the (unique) isoperimetric set of volume $M$, as desired.

\section{Applications on existence and regularity\label{applications}}

This final section is devoted to show two applications of Theorems~\mref{boundedness} and~\mref{eps-epsbeta} to the questions of the existence and regularity of isoperimetric sets. Even if those two are only simple consequences of the above theorems and of the known facts about existence and regularity, the results that we can find are stronger than those which were previously known. More refined new results concerning the regularity in the $2$-dimensional case are contained in the forthcoming paper~\cite{CP2}.

\subsection{On the existence of isoperimetric sets}

Let us start discussing the question of existence of isoperimetric sets. As we explained in the Introduction, the existence is deeply connected with the boundedness; in particular, some results in~\cite{MP} provide existence of isoperimetric sets (for a certain volume $m$) under the assumption that all the isoperimetric sets for volumes smaller than $m$ (if any) are bounded. Of course, in all these results the boundedness assumption can be removed whenever it comes directly from Theorem~\mref{boundedness}. Let us be more precise: we recall the following result (which can be found in~\cite[Theorems~7.9,\,7.11,\,7.13]{MP}).

\begin{theorem}\label{above}
Let $f$ be a density on $\R^N$ approaching a finite limit $a>0$ at infinity, and assume that the isoperimetric sets are bounded. Then there exist isoperimetric sets of all volumes if one of the following properties holds:
\begin{enumerate}
\item[(i)] for every $V> 0$ and for every $R>0$, there is some ball $B$ of volume $V$ at distance from the origin at least $R$ such that
\[
\sup_{x\in B} f(x) \leq a^{\frac 1N} \Big( \inf_{x\in B} f(x)\Big)^{\frac{N-1} N}\,;
\]
\item[(ii)] $f$ is radial and, for any $c>0$ and any $\rho>0$, there exists some $R\geq \rho$ such that
\[
f(R) \leq a - e^{-cR}\,;
\]
\item[(iii)] for any $V>0$, there exist balls $B$ of volume $V$ arbitrarily far from the origin satisfying the mean inequality
\[
\intmed_{\partial B} f \leq a^{\frac 1N} \bigg( \intmed_B f\bigg)^{\frac{N-1} N}\,.
\]
\end{enumerate}
\end{theorem}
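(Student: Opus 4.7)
The plan is to run a standard concentration--compactness argument on a minimising sequence $\{E_n\}$ with $|E_n|_f=V$ and $P_f(E_n)\to I(V):=\inf\{P_f(E):|E|_f=V\}$. Using the BV compactness recalled in Section~\ref{sec:finper} and the positivity of $f$ (which is guaranteed by $f\to a>0$ at infinity together with lower semicontinuity), I would extract a subsequence with $\chi_{E_n}\to\chi_{E_\infty}$ in $L^1_{\rm loc}$, $E_\infty$ of locally finite perimeter, and $P_f(E_\infty)\leq\liminf P_f(E_n)$. The only obstruction to $E_\infty$ being itself an isoperimetric set of volume $V$ is mass escape at infinity: $V_1:=|E_\infty|_f\leq V$, and the deficit $V-V_1$ is the mass which leaks out of every bounded region.

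The key intermediate step is the concentration--compactness inequality
\[
I(V)\;\geq\;I(V_1)\;+\;a^{1/N}N\omega_N^{1/N}(V-V_1)^{(N-1)/N}\,,
\]
which I would obtain by truncating $E_n$ outside a slowly growing ball $B_{R_n}$ and applying the Euclidean isoperimetric inequality to the outer piece (whose weighted and Euclidean measures are comparable because $f\to a$), then passing to the liminf. The \emph{a priori} boundedness hypothesis enters precisely here: it lets one assume inductively on the volume that $I(V_1)$ is actually attained by a \emph{bounded} set $E_1$, which can then be placed disjointly from any other configuration at arbitrarily large distance from the origin.

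To rule out $V_1<V$ one constructs, under each of the three assumptions, an admissible competitor $E_1\cup B$ of weighted volume $V$ whose perimeter is strictly less than the right-hand side above. In~(iii), the mean inequality is exactly the statement that a ball $B$ arbitrarily far from the origin, of suitable weighted volume, satisfies $P_f(B)\leq a^{1/N}N\omega_N^{1/N}|B|_f^{(N-1)/N}$; strict inequality is obtained by pushing $B$ further out along a sequence. Case~(i) reduces to~(iii) via the pointwise bound $\sup_B f\leq a^{1/N}(\inf_B f)^{(N-1)/N}$, which yields the mean inequality by combining $P_f(B)\leq\sup_B f\cdot P_\eu(B)$ and $|B|_f\geq\inf_B f\cdot|B|_\eu$ with the Euclidean isoperimetric identity on a ball. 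Case~(ii) is subtler: the fact that $f$ is radial together with Theorem~\ref{sphsym} allows replacing $E_n$ by its spherical rearrangement, and the exponential decay $f(R)\leq a-e^{-cR}$ provides just enough room to beat the asymptotic cost with a thin spherical-shell competitor located near radius $R$.

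The main obstacle, and the reason the boundedness hypothesis is indispensable, lies in the gluing step. Without the \emph{a priori} boundedness of the residual minimiser $E_1$, one cannot place the escaping competitor $B$ disjointly from it, so the putative competitor $E_1\cup B$ is not even admissible: the two pieces overlap and both the weighted volume and the perimeter estimates collapse. This is exactly the gap that Theorem~\mref{boundedness} of the present paper closes, by deducing the boundedness for free from the essential boundedness and continuity of $f$.
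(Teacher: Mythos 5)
This theorem is \emph{not} proved in the paper: it is recalled verbatim from~\cite[Theorems~7.9, 7.11, 7.13]{MP}, so there is no in-paper argument for your sketch to be compared against. What can be said is that the concentration--compactness skeleton you describe is indeed the framework used in~\cite{MP}, and the intermediate inequality $I(V)\geq I(V_1)+a^{1/N}N\omega_N^{1/N}(V-V_1)^{(N-1)/N}$ is the right estimate to aim for. But two steps of your sketch, as written, would not close.

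First, the phrase ``it lets one assume inductively on the volume that $I(V_1)$ is actually attained by a bounded set $E_1$'' is circular: there is no induction scheme on $(0,V]$ that makes this legitimate, and existence at volume $V_1$ is precisely the kind of statement under proof. The correct route is to show that the partial limit $E_\infty$ itself achieves $I(V_1)$. This follows once you also establish the matching upper bound $I(V)\leq I(V_1)+a^{1/N}N\omega_N^{1/N}(V-V_1)^{(N-1)/N}$ (take a minimizing sequence at volume $V_1$, truncate it to a bounded set with negligible perimeter loss, glue in a far-away ball provided by the hypotheses); combining the two bounds turns all your inequalities into equalities, giving $P_f(E_\infty)=I(V_1)$, and only then does the a priori boundedness hypothesis apply, namely to $E_\infty$. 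Second, the claim that one can ``rule out $V_1<V$'' by producing a competitor $E_1\cup B$ with \emph{strict} inequality is false: hypotheses~(i) and~(iii) give the non-strict $P_f(B)\leq a^{1/N}N\omega_N^{1/N}|B|_f^{(N-1)/N}$, and pushing $B$ farther out does not create strictness. In fact nothing needs to be ruled out: the chain of inequalities closes with equalities, so $E_\infty\cup B$ \emph{is} a minimizer of volume $V$ -- mass escape is exploited constructively, not contradicted. Finally, case~(ii) is undersold: spherical rearrangement plus quoting $f(R)\leq a-e^{-cR}$ does not by itself furnish a competitor, and the argument in~\cite{MP} for that case (comparing the radial profile to annular sets near radius $R$) is genuinely different in kind from the ball competitor of~(i) and~(iii), so it cannot be dispatched in one sentence.
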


As an immediate application of Theorem~\mref{boundedness} we can then strengthen the above existence result as follows (notice that there is no need of requiring the essential boundedness to $f$, by the assumption that $f$ converges to $a>0$ at infinity).

\begin{theorem}
Let $f$ be a continuous density on $\R^N$ approaching a finite limit $a>0$ at infinity. Then there exist isoperimetric sets of all volumes if one of the properties~{\rm (i)},\,{\rm (ii)}, or~{\rm (iii)} of Theorem~\ref{above} holds true.
\end{theorem}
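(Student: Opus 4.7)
The plan is to observe that the a priori boundedness hypothesis appearing in Theorem~\ref{above} becomes automatic under our continuity assumption, so that the existence statement follows directly. The only substantive work is to verify that a density which is continuous on $\R^N$ and approaches a positive limit at infinity satisfies the essential boundedness condition of Definition~\ref{essbdd}.

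To this end, I would first pick $R_0$ large enough so that $a/2\le f(x)\le 2a$ for every $|x|\ge R_0$, which is possible because $f\to a>0$ at infinity. Inside $\overline{B_{R_0}}$ the function $f$ is continuous, hence bounded from above; and by the standing assumption of the paper its zero set is locally finite, so inside the compact ball $\overline{B_{R_0}}$ there are only finitely many zeros $z_1,\dots, z_k$. Setting
\[
U_\delta := \R^N \setminus \bigcup_{j=1}^k \overline{B_\delta(z_j)}\,,
\]
one obtains a family of open sets with bounded complement; moreover $\R^N\setminus\bigcup_{\delta>0} U_\delta = \{z_1,\dots,z_k\}$ has zero $\H^{N-1}$-measure, so $\{U_\delta\}_{\delta>0}$ is well-decreasing. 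On each $U_\delta$, the continuous function $f$ has a positive minimum on the compact set $\overline{B_{R_0}}\cap U_\delta$ and is bounded above everywhere in $\R^N$, hence essential boundedness is established.

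With this in hand, Theorem~\ref{obvious} applies and tells us that every isoperimetric set in $\R^N$ with density $f$ is bounded. In particular, the a priori boundedness hypothesis ``the isoperimetric sets are bounded'' required in the statement of Theorem~\ref{above} is automatically satisfied, and so the existence of isoperimetric sets for every volume follows immediately from that theorem whenever any of the conditions~(i),~(ii), or~(iii) holds. The proof is essentially a verification of hypotheses: the whole non-trivial content is contained in the previously proved Theorems~\mref{boundedness} and~\mref{eps-epsbeta}, together with the known Theorem~\ref{above} from~\cite{MP}; the only mild obstacle, which is handled by the well-decreasing family above, is the bookkeeping for the possibly finitely many zeros of $f$ in a compact region.
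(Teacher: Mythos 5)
Your proposal is correct and follows essentially the same route the paper takes: note that continuity of $f$ together with convergence to a positive finite limit at infinity makes $f$ essentially bounded, invoke Theorem~\ref{obvious} to conclude that every isoperimetric set is bounded, and then apply Theorem~\ref{above} verbatim. You are slightly more explicit than the paper (which dismisses the verification with a parenthetical remark) in constructing the well-decreasing family $\{U_\delta\}$ to handle possible zeros of $f$ inside a compact region, which is a reasonable thing to spell out; the one thing you silently use is that ``continuous'' here means finite-valued, so poles of $f$ need not be excised alongside the zeros.
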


\subsection{On the regularity of isoperimetric sets}

We pass now to the regularity issue. To start, we recall what is known up to now concerning the regularity of isoperimetric sets in $\R^N$ with density (see for instance~\cite[Proposition~3.5 and Corollary~3.8]{Morgan2003}).

\begin{theorem}\label{strongregu} Let $f$ be a smooth or $C^{k,\alpha}$ density on $\R^n$, with $k\geq 1$. Then the boundary of any isoperimetric set is a smooth or $C^{k+1,\alpha}$ submanifold except on a singular set of Hausdorff dimension at most $n-8$.
\end{theorem}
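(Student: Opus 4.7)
The plan is to combine the $\eps$--$\eps$ property of Theorem~B (which, for $C^{k,\alpha}$ densities with $k\geq 1$, is genuinely linear in $\eps$, since $\alpha=1$ gives $\beta=1$) with the classical De~Giorgi--Almgren--Federer regularity machinery, and then to bootstrap via Schauder estimates applied to the weighted mean curvature equation.

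First I would establish quasi-minimality. Fix $x_0\in\partial^*E$ and, by Theorem~B applied with H\"older exponent $1$, take a small ball $B_0$ centered at $x_0$ and a constant $C$ realizing the $\eps$--$\eps$ property. Given any competitor $F$ with $F\triangle E\subset\subset B_r(y)$ for some ball disjoint from $B_0$, set $\eta:=|F|_f-|E|_f$. The $\eps$--$\eps$ property produces $\tilde F$ that agrees with $F$ outside $B_0$, satisfies $|\tilde F|_f=|E|_f$, and obeys $P_f(\tilde F)\leq P_f(F)+C|\eta|$. Since $|\eta|\leq \|f\|_\infty|E\triangle F|_{\eu}\leq C' r^N$, isoperimetric minimality of $E$ yields
\[
P_f(E)\leq P_f(F)+C'|E\triangle F|_{\eu}\leq P_f(F)+C'' r^N,
\]
which, using the uniform two-sided bounds on $f$ to compare $P_f$ with $P_{\eu}$, is precisely Tamanini's $(\Lambda,r_0)$-minimality condition for the Euclidean perimeter.

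Next I would invoke the classical regularity theory for such $\Lambda$-minimizers. Tamanini's theorem gives that $\partial^*E$ is a $C^{1,\alpha'}$ hypersurface for some $\alpha'\in(0,1)$. Federer's dimensional reduction then bounds the Hausdorff dimension of the singular set $(\partial E\setminus\partial^*E)$ by $n-8$; the only point to be verified is that tangent cones to $E$ at singular points are area-minimizing in the unweighted sense, which is immediate because $f$ becomes locally constant under the blow-up rescaling and the density bounds are preserved, so a nontrivial singular tangent cone would contradict Simons' theorem in dimensions $n\leq 7$.

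Finally I would bootstrap the regularity on the regular part. The first variation of $P_f$ under a volume constraint gives the Euler--Lagrange equation
\[
H_{\partial E}+\nabla\log f\cdot\nu_E=\lambda
\]
on $\partial^*E$, for a Lagrange multiplier $\lambda\in\R$. Writing $\partial^*E$ locally as a graph $x_n=u(x')$ of the $C^{1,\alpha'}$ function produced above, this becomes a quasilinear uniformly elliptic PDE whose coefficients depend smoothly on $\nabla u$ and $C^{k-1,\alpha}$-smoothly on $x'$ (through $\log f$). Schauder estimates applied iteratively improve $u$ from $C^{1,\alpha'}$ to $C^{2,\alpha}$, then to $C^{3,\alpha}$, and so on up to $C^{k+1,\alpha}$; when $f$ is smooth, the same iteration produces $u\in C^\infty$. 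The main obstacle is really the codimension-$8$ estimate on the singular set, which cannot be produced by any soft argument from the $\eps$--$\eps$ property alone and must be imported from the deep unweighted theory of area-minimizing cones (Simons' theorem plus Federer's dimension reduction); everything else in this proof is a standard marriage of the quasi-minimality furnished by Theorem~B with elliptic regularity.
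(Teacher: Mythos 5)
The paper does not prove Theorem~\ref{strongregu} at all: it is stated as a recall of known results and cited to Morgan 2003 (Proposition 3.5 and Corollary 3.8 there). The paper's own new contribution in Section~5.2 is the weaker but more general Theorem~\ref{regu}, which handles merely $C^{0,\alpha}$ densities. Your plan follows the standard route that Morgan also uses (almost-minimality of isoperimetric sets with respect to the Euclidean perimeter, then the classical regularity theory, then a Schauder bootstrap on the weighted mean-curvature equation), and most steps are correct, but there is a genuine gap in the passage from the $\eps$--$\eps$ property to Tamanini's $(\Lambda,r_0)$-minimality as you have written it.

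Concretely: after deducing $P_f(E)\leq P_f(F)+C''r^N$, you claim that ``using the uniform two-sided bounds on $f$'' yields Tamanini's \emph{additive} condition for $P_{\eu}$. It does not. The bound $1/M\leq f\leq M$ alone gives only $P_{\eu}(E,B_r)\leq M^2 P_{\eu}(F,B_r)+MC''r^N$, i.e.\ a \emph{multiplicative} quasi-minimality with constant $M^2>1$; this is enough for porosity and $\partial E=\partial^*E$, but not for the $C^{1,\alpha'}$ regularity you need to start the Schauder iteration. To fix it you must actually use the Lipschitz regularity of $f$ (available since $k\geq1$): normalize $f(x_0)=1$ so that $|f-1|\leq Mr$ on $B_r(x_0)$, hence $\max_{B_r}f/\min_{B_r}f=1+O(r)$, and absorb the resulting $O(r)\cdot P_{\eu}(F,B_r)$ error into an $O(r^N)$ term by invoking the quasi-minimality estimate $P_{\eu}(F,B_r)\lesssim r^{N-1}$ that you establish first. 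This ``quasi-minimality first, then local normalization'' two-step is precisely how the paper proves its Theorem~\ref{regu}, and you need the same argument here. A smaller issue: the compensation ball $B_0$ furnished by Theorem~\mref{eps-epsbeta} is fixed once and for all, while the Tamanini condition must be checked for every small $B_r(y)$, including those that meet $B_0$; the usual remedy is to fix two disjoint compensation balls and use whichever one misses $B_r(y)$. The remaining steps --- the observation that blow-up limits are unweighted area-minimizing because $f$ becomes constant in the rescaled limit, Federer dimension reduction giving the $(n-8)$ bound, and the Schauder bootstrap on $H+\nabla\log f\cdot\nu=\lambda$ --- are correct and standard.
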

Notice that, as we already explained in the Introduction, the known result covers only cases in which the density is at least Lipschitz, while there are no results for lower regularity of the density. Our theorems, instead, allow to obtain some regularity results for the isoperimetric sets even with just bounded densities.\par

What we will do here, in fact, is just to put together our Theorem~\mref{eps-epsbeta} and the well-known regularity theory in the standard Euclidean setting. To do so, let us first briefly recall a couple of important notions of minimality for the perimeter and the corresponding regularity properties (whose proofs can be found for instance in~\cite{DavSem,KKLS,Tam}, see also~\cite{GG}); then, we will derive the regularity results for our setting.
\begin{definition}
Let $E\subseteq\R^N$ be a set of locally finite perimeter. We say that $E$ is \emph{quasi-minimal} if, for some $C>0$ and for every ball $B_r(x)$,
\[
P_\eu\big(E,B_r(x)\big)\leq Cr^{N-1}\,.
\]
Moreover, we say that $E$ is \emph{$\omega$-minimal}, for some continuous and increasing function $\omega:\R^+\to\R^+$ with $\omega(0)=0$, if, for every ball $B_r(x)$ and every set $F$ such that $F\triangle E\subset\subset B_r(x)$, one has
\begin{equation}\label{omega-min}
P_\eu\big(E,B_r(x)\big)\leq P_\eu\big(F,B_r(x)\big)+\omega(r) \,r^{N-1}\,.
\end{equation}
\end{definition}
\begin{definition}
Let $E$ be a Borel subset of $\R^N$. We say that $E$ is \emph{porous} if there exists a small constant $\delta>0$ such that, for every $x\in \partial E$ and every arbitrarily small ball $B_r(x)$ centered at $x$, there are two balls $B_1,\,B_2 \subseteq B_r(x)$ of radius $\delta r$ such that $B_1\subseteq E$ and $B_2\subseteq \R^N\setminus E$.
\end{definition}
\begin{theorem}\label{omega-regu}
If a set $E$ is quasi-minimal, then it is porous and the reduced and the topological boundaries coincide ($\H^{N-1}$-a.e.). Moreover, if $E$ is $\omega$-minimal with $\omega(r)=Cr^\eta$ for some $0<\eta\leq 1$, then $\partial^*E$ is $C^{1,\eta/2}$.
\end{theorem}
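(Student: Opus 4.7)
The plan is to follow the classical density-excess framework from geometric measure theory developed by De Giorgi, Almgren, and Tamanini. For the first part I would first use the local perimeter bound together with suitable comparison competitors (emptying or filling the ball $B_r(x)$) and the relative Euclidean isoperimetric inequality in $B_r(x)$ to derive uniform density estimates of the form $c\,r^N \leq |E\cap B_r(x)|_\eu \leq (1-c)r^N$ for every $x\in\partial E$ and every sufficiently small $r>0$. Porosity then follows from these density bounds by a standard Vitali/covering argument inside $B_r(x)$: the lower density estimate forces the existence of a sub-ball of radius $\delta r$ entirely contained in $E$, and symmetrically for $E^c$. The identification $\partial E=\partial^*E$ up to $\H^{N-1}$-null sets is Federer's criterion applied to the points of density strictly between $0$ and $1$, which by the density estimates exhausts $\partial E$ up to an $\H^{N-1}$-negligible set.

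For the second part the heart of the matter is an excess-decay estimate. Define the spherical excess
\[
e(x,r,\nu):=\frac{1}{r^{N-1}}\int_{\partial^*E\cap B_r(x)}\frac{|\nu_E(y)-\nu|^2}{2}\,d\H^{N-1}(y),
\]
and $e(x,r):=\inf_{\nu\in\S^{N-1}}e(x,r,\nu)$. The core lemma, proved by a blow-up/compactness argument in the spirit of De Giorgi, states that there exist $\eps_0>0$, $\lambda\in(0,1)$ and $C_0>0$ such that if $e(x,r)\leq\eps_0$ then
\[
e(x,\lambda r)\leq \tfrac{1}{2}\,e(x,r)+C_0\,\omega(r).
\]
The idea is that a sequence of $\omega_n$-minimal sets with vanishing excess converges in $L^1_{\rm loc}$ to a half-space (for which the excess vanishes identically), while the perturbation $\omega(r)=Cr^\eta$ enters only as a forcing term that disappears in the blow-up limit.

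Iterating the decay estimate along the dyadic scales $\lambda^k r$ and summing the geometric series whose terms are comparable to $\omega(\lambda^k r)$ yields $e(x,r)\leq C r^\eta$, and the classical tilt estimate $|\nu_E(y)-\nu_E(x)|^2\leq C\,e(x,2|y-x|)$ then forces $\nu_E$ to be $(\eta/2)$-Hölder continuous on $\partial^*E$, i.e., $\partial^*E\in C^{1,\eta/2}$. The main obstacle is the decay lemma itself, and in particular the sharp exponent $\eta/2$: this requires working with the \emph{quadratic} form of the excess so that the unperturbed decay rate is $\lambda^{2}$ rather than $\lambda$; balancing this quadratic decay against the linear perturbation $\omega(r)$ is precisely what forces the Hölder exponent to be $\eta/2$ and not simply $\eta$, and this careful balancing is the technical core of Tamanini's regularity theorem.
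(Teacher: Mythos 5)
The paper does not actually prove this theorem: it is stated as a known result with citations to David--Semmes, Kinnunen--Korte--Lorent--Shanmugalingam, Tamanini and Giaquinta--Giusti. Your sketch reconstructs the classical argument from these references (density estimates plus a covering argument for porosity; De Giorgi/Tamanini excess decay for the $C^{1,\eta/2}$ part), so the strategy is precisely the expected one. Two steps, however, are stated too loosely to go through as written and need to be repaired.

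For porosity, the two-sided volume density bound $cr^N\le|E\cap B_r(x)|_\eu\le(1-c)r^N$ on its own does \emph{not} force an interior ball of radius $\delta r$: one can construct sets with uniform volume density at every boundary point and no porous structure. What one actually needs is to combine the lower volume density with the \emph{perimeter} density bounds --- the upper one from quasi-minimality and a lower one obtained from the relative isoperimetric inequality together with the comparison competitors. Then the covering argument runs on $\partial E$: if no $\delta r$-ball lies inside $E\cap B_{r/2}(x)$, then $E\cap B_{r/2}(x)$ is contained in the $\delta r$-neighbourhood of $\partial E$, and the Ahlfors regularity of $\H^{N-1}\res\partial E$ shows this neighbourhood has volume $O(\delta)\,r^N$, contradicting the lower volume density for $\delta$ small. (Related to this, note that Definition 5.2 in the paper records only the one-sided bound $P_\eu(E,B_r(x))\le Cr^{N-1}$, which is not sufficient by itself --- e.g.\ it does not even rule out $\partial E\ne\partial^*E$; it is the comparison form of quasi-minimality used in the cited references, which your ``emptying/filling'' competitors tacitly invoke, that does the work, and that is what isoperimetric sets with bounded density actually satisfy.) For the excess decay, the lemma in the form $e(x,\lambda r)\le\tfrac12\,e(x,r)+C_0\,\omega(r)$ at a \emph{fixed} $\lambda$ does not propagate to $e(x,r)\lesssim r^\eta$: iterating it gives only $e(x,r)\lesssim r^\sigma$ with $\sigma=\log2/\log(1/\lambda)$, which may be strictly smaller than $\eta$. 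One needs either to choose $\lambda$ so that $\lambda^\eta>1/2$, or, as in Tamanini's treatment, to prove the sharper decay $e(x,\lambda r)\le C_1\lambda^2\,e(x,r)+C_0\,\omega(r)$ valid for \emph{all} small $\lambda$; with that form your iteration and the concluding Campanato step ($e(x,r)\lesssim r^\eta\Rightarrow\nu_E\in C^{0,\eta/2}$, hence $\partial^*E\in C^{1,\eta/2}$) are correct and explain the exponent $\eta/2$ as you say.
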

We will see that an isoperimetric set is always quasi-minimal if the density is even just bounded from above and below, hence the porosity holds true in all these cases. Instead, the $\omega$-minimality holds true as soon as $f$ is H\"older continuous, thus in these more particular cases we will get also some further regularity. More precisely, we obtain the following result.
\begin{theorem}\label{regu}
Let $E$ be an isoperimetric set corresponding to the density $f$. If $f$ is bounded from above and below, then $E$ is porous and $\partial^* E=\partial E$. If moreover $f$ is $\alpha$-H\"older, then one has also that $\partial^*E\in C^{1,\frac{\alpha}{2N(1-\alpha)+2\alpha}}$.
\end{theorem}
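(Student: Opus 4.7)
The plan is to reduce the two claims to the standard Euclidean regularity theory recalled in Theorem~\ref{omega-regu}, by verifying \emph{quasi-minimality} when $f$ is merely bounded and \emph{$\omega$-minimality} with a power-law modulus when $f$ is $\alpha$-H\"older; in both cases, Theorem~\mref{eps-epsbeta} is the bridge that converts small volume changes into controlled perimeter changes.

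For the first statement, I would apply Theorem~\mref{eps-epsbeta} with $\alpha=0$, which yields the $\eps-\eps^{(N-1)/N}$ property. Given any small ball $B_r(x)$, take the competitor $F_0 := E\setminus B_r(x)$; the lost weighted volume is $\eps = |E\cap B_r(x)|_f \leq M\omega_N r^N$. Applying the $\eps-\eps^\beta$ property at a point of $\partial^*E$ far from $B_r(x)$ produces a set $F$ with $|F|_f = |E|_f$ and $P_f(F)\leq P_f(F_0) + C\eps^{(N-1)/N}$. Since $P_f(F_0) \leq P_f(E,\R^N\setminus\overline{B_r(x)}) + MN\omega_N r^{N-1}$, the isoperimetric minimality of $E$ implies $P_f(E,B_r(x))\leq C' r^{N-1}$, and therefore $P_\eu(E,B_r(x))\leq M C' r^{N-1}$; for large $r$, quasi-minimality is automatic because $P_\eu(E) \leq MP_f(E)<\infty$. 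Theorem~\ref{omega-regu} then delivers porosity and the identity $\partial^*E = \partial E$.

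For the H\"older case, let $F$ be any competitor with $F\triangle E\subset\subset B_r(x)$ and set $\delta := |F|_f - |E|_f$, which satisfies $|\delta|\leq M\omega_N r^N$. Applying Theorem~\mref{eps-epsbeta} with $\beta=\beta(\alpha,N)$ at a point away from $B_r(x)$ gives $\tilde F$ with $|\tilde F|_f = |E|_f$ and $P_f(\tilde F)\leq P_f(F) + C|\delta|^\beta$; isoperimetric minimality, combined with the fact that $E=F$ outside $B_r(x)$, then yields $P_f(E,B_r(x))\leq P_f(F,B_r(x)) + C'r^{N\beta}$. To convert to Euclidean perimeters I would use $|f(y)-f(x)|\leq Mr^\alpha$ on $B_r(x)$, which gives $(f(x)-Mr^\alpha)P_\eu(E,B_r(x))\leq (f(x)+Mr^\alpha)P_\eu(F,B_r(x)) + C'r^{N\beta}$. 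Assuming without loss of generality $P_\eu(F,B_r(x))\leq P_\eu(E,B_r(x))$, the quasi-minimality established in the previous step bounds the resulting multiplicative $r^\alpha P_\eu(F,B_r(x))$ error by $C_q r^{N-1+\alpha}$. A short algebraic computation shows $N\beta - (N-1) = \frac{\alpha}{N(1-\alpha)+\alpha} \leq \alpha$, so $r^{N\beta}$ is the dominant error and $E$ is $\omega$-minimal with $\omega(r) = Cr^\eta$ for $\eta := \frac{\alpha}{N(1-\alpha)+\alpha}$. Theorem~\ref{omega-regu} then gives $\partial^*E\in C^{1,\eta/2}$, and $\eta/2 = \frac{\alpha}{2N(1-\alpha)+2\alpha}$ matches the claim exactly.

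The main obstacle I expect is the Euclidean-to-weighted conversion: a naive estimate produces a multiplicative error of the form $(1+C_1 r^\alpha)P_\eu(F,B_r(x))$ that at first sight does not fit into the additive $\omega(r)r^{N-1}$ form required for $\omega$-minimality. The point is that this multiplicative error can be absorbed into an additive one only by invoking our $\eps-\eps^\beta$ machinery \emph{twice}: first through the $\alpha=0$ version to establish quasi-minimality and thereby bound $P_\eu(F,B_r(x))$ by $C_q r^{N-1}$, and then through the $\alpha>0$ version to produce the genuine perimeter comparison. Checking that $\eta/2$ comes out to exactly $\frac{\alpha}{2N(1-\alpha)+2\alpha}$ is the clean numerical confirmation that the whole strategy is tight.
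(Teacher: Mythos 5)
Your proposal is correct and follows essentially the same two-stage strategy as the paper: first establish quasi-minimality via the $\eps-\eps^{(N-1)/N}$ property, then upgrade to $\omega$-minimality with $\eta=N\beta-(N-1)=\frac{\alpha}{N(1-\alpha)+\alpha}$ by invoking the H\"older version of Theorem~\mref{eps-epsbeta} and using the just-established quasi-minimality to absorb the multiplicative error $r^\alpha P_\eu(F,B_r(x))$ into an additive $r^{\alpha+N-1}$ term. The paper frames both steps as contradictions and normalizes $\min_{B_r(x)}f=1$ rather than working with $f(x)$ and the bound $f\geq 1/M$ directly, but these are only cosmetic variations on the same argument.
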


We point out that the regularity given by the above theorem is surely not optimal. Indeed, in the forthcoming paper~\cite{CP2} we will improve the above regularity result by showing that, in dimension $N=2$, if $f\in C^{0,\alpha}$ then the boundary of any isoperimetric set is of class $C^{1,\frac{\alpha}{3-2\alpha}}$, while Theorem~\ref{regu} gives only $C^{1,\frac{\alpha}{4-2\alpha}}$.\par

Before giving the proof of Theorem~\ref{regu}, a couple of remarks is in order. 

\begin{remark}
We have claimed the regularity theorem under the assumption that the density is bounded, or H\"older, instead of essentially bounded or essentially H\"older, just for simplicity of notations. However, it is very easy to deduce the general claim. In fact, assume that $f$ is essentially bounded, or essentially $\alpha$-H\"older, and let $U_\delta$ be the open sets as in Definition~\ref{essbdd}. Then, just arguing inside each open set $U_\delta$, in the essentially bounded case we derive that an isoperimetric set $E$ satisfies the porosity property on each $U_\delta$, and that $\partial E=\partial^* E$ on $\bigcup_{\delta>0} U_\delta$. Similarly, in the essentially $\alpha$-H\"older case, we deduce that $\partial^*E\cap \bigcup_{\delta>0} U_\delta$ is of class $C^{1,\frac{\alpha}{2N(1-\alpha)+2\alpha}}$.
\end{remark}

\begin{remark}
One could try to obtain some regularity for the isoperimetric sets also starting from other standard regularity results (whose proofs can be found in~\cite{Amb,Mag,Tam}). For instance, a set $E$ is of class $C^1$ if it has the uniform interior and exterior ball property. Recall that $E$ is said to satisfy the \emph{interior (resp., exterior) ball property} if there is $\bar r>0$ such that, for every $x\in \partial^*E$, there exists a ball $B_{\bar r}(y)$ contained in $E$ (resp., in $\R^N\setminus E$) such that $x\in\partial B_{\bar r}(y)$. In addition, $E$ is even of class $C^{1,\gamma}$ for every $0<\gamma<1$ (and even $C^{1,1}$ if $N=2$) if it is \emph{$\Lambda$-minimal}, that is, there exists $\Lambda\geq 0$ such that for every set $F$ one has $P_\eu(E)\leq P_\eu(F) + \Lambda|F \triangle E|_\eu$.\par
But unfortunately, both these conditions seem to become useful only when $f$ is at least Lipschitz (while we are interested in the lower regularity case). To be more precise, concerning the interior-exterior ball property one can easily observe that, if $f$ is not Lipschitz, it is not even true that an arc of circle of small radius is longer than the corresponding chord. And concerning the $\Lambda$-minimality, one cannot hope to have it if $f$ is $\alpha$-H\"older and $\alpha<1$, as one can derive arguing as in the proof of Theorem~\ref{regu} below, and as one can also guess because otherwise the regularity result ($f$ of class $C^{0,\alpha}$ would imply any isoperimetric set of class $C^{1,1-\eps}$) would be excessively strong.
\end{remark}

We are now ready to conclude the paper with the proof of the regularity Theorem~\ref{regu}.

\begin{proof}[Proof of Theorem~\ref{regu}]
Keeping in mind Theorem~\ref{omega-regu}, we are reduced to check that an isoperimetric set $E$ is quasi-minimal if $f$ is bounded from above and below, while $E$ is $\bar\omega$-minimal with $\bar\omega(r)=\overline C \, r^{\frac{\alpha}{N(1-\alpha)+\alpha}}$ if $f$ is also $\alpha$-H\"older.\par

Let us start by assuming that $1/M < f <M$, and suppose by contradiction that an isoperimetric set $E$ is not quasi-minimal. Hence, for any large constant $K$ there exists a ball $B_r(x)$ such that 
\[
P_\eu\big(E,B_r(x)\big)> K r^{N-1}\,.
\]
Let then $B_r(x)$ be any such ball, and define the set $F:=E\setminus B_{r}(x)$: provided that $K$ is very large, we deduce
\[\begin{split}
P_f(F)&\leq P_f(E)-P_f\big(E, B_r(x)\big)+N\omega_Nr^{N-1}M \\
&\leq P_f(E)-\frac{K}{M}\, r^{N-1}+N\omega_Nr^{N-1}M \leq P_f(E)-\frac{K}{2M}\, r^{N-1}\,.
\end{split}
\]
Thanks to Theorem~\mref{eps-epsbeta}, we know that $F$ fulfills the $\eps-\eps^{\frac{N-1}{N}}$ property with some constant $C$, thus there exists a further set $\widetilde E$ satisfying $|\widetilde E|_f=|E|_f$ and with
\[\begin{split}
P_f(\widetilde E) &\leq P_f(F) + C \Big( \big|\widetilde E\big|_f-\big|F\big|_f\Big)^{\frac{N-1}N}
\leq P_f(E)-\frac{K}{2M}\, r^{N-1} + C \Big( \big|E\big|_f-\big|F\big|_f\Big)^{\frac{N-1}N}\\
&\leq P_f(E)-\frac{K}{2M}\, r^{N-1} + C \Big(M \omega_N r^N\Big)^{\frac{N-1}N}
\leq P_f(E)-\frac{K}{3M}\, r^{N-1} < P_f(E)\,,
\end{split}\]
where we are assuming again $K$ to be large enough. Since this inequality is against the isoperimetric property of $E$, the contradiction shows the quasi-minimality of $E$.\par\medskip

Let us now assume that $f$ is also $\alpha$-H\"older and $E$ is an isoperimetric set: to conclude the proof, we need to show that $E$ is $\bar\omega$-minimal with $\bar\omega(r)=\overline C\, r^{\frac{\alpha}{N(1-\alpha)+\alpha}}$ and some suitable $\overline C$. To do so, we pick $0<\eta\leq 1$ and we investigate whether, for some suitable constant $C$, $E$ is $\omega$-minimal with $\omega (r)=Cr^\eta$ (eventually, we will find that the best choice is $\omega=\bar\omega$).\par

Therefore we suppose that, for any large constant $K$, there exist a ball $B_r(x)$ and a set $F$ with $F \triangle E \subset\subset B_r(x)$, such that
\begin{equation}\label{contr-omega-min}
P_\eu(E,B_r(x))>P_\eu(F,B_r(x))+ K r^{\eta+N-1}\,.
\end{equation}
By the first part of the proof we know that $E$ is quasi-minimal, thus $P_\eu(E, B_r(x))\leq C_1 r^{N-1}$, and by~(\ref{contr-omega-min}) it is then also $P_\eu(F, B_r(x))\leq C_1 r^{N-1}$. Let us assume, just for simplicity of notations, that $\min_{B_r(x)}f=1$; hence, since $f$ is $\alpha$-H\"older, it is $\max_{B_r(x)}f\leq 1+Mr^\alpha$, so that
\begin{align*}
P_f\big(E,B_r(x)\big)\geq P_\eu(E, B_r(x))\,, && P_f\big(F,B_r(x)\big)\leq (1+ M r^\alpha)P_\eu\big(F , B_r(x)\big)\,.
\end{align*}
Recalling~\eqref{contr-omega-min}, we deduce
\begin{equation}\label{omega-1}
\begin{split}
P_f(F)-P_f(E)&=P_f(F,B_r(x))-P_f(E,B_r(x))\\
&\leq (1+M r^\alpha)P_\eu(F , B_r(x)) - P_\eu(E, B_r(x))
\leq C_2 r^{\alpha+N-1}-Kr^{\eta+N-1}\,.
\end{split}
\end{equation}
Applying now Theorem~\mref{eps-epsbeta}, we can define a competitor set $\widetilde E$ with
\begin{align*}
\big| \widetilde E\big|_f = \big| E \big|_f\,, &&
P_f\big(\widetilde E\big)\leq P_f(F) + C_3\Big(\big||F|_f-|\widetilde E|_f\big|\Big)^\beta\leq P_f(F)+C_4 r^{N\beta}\,.
\end{align*}
Combining this estimate with~\eqref{omega-1}, we get
\[
P_f(\widetilde E)-P_f( E)\leq P_f(F)-P_f(E)+C_4r^{N\beta}\leq C_2 r^{\alpha+N-1}-Kr^{\eta+N-1} + C_4 r^{N\beta}\,.
\]
Since by the definition~\eqref{defbeta} of $\beta$ one immediately checks that $\alpha+N-1\geq N\beta$ for any $\alpha \in (0,1]$, we have a contradiction with the optimality of $E$ --with the choice of a sufficiently large $K$-- as soon as $\eta+N-1\leq N\beta$, that is, $\eta\leq \frac\alpha{N(1-\alpha)+\alpha}$. Summarizing, we have shown the $\bar\omega$-minimality of $E$ and hence the proof is concluded.
\end{proof}

\section*{Acknowledgments}

The authors wish to thank Luigi Ambrosio, Guido De Philippis, Nicola Fusco, Francesco Maggi, Frank Morgan and Emanuele Spadaro for some fruitful discussions and for their suggestions on a preliminary version of this paper. Both authors have been supported by the ERC Starting Grant ``AnOptSetCon'' n. 258685, while AP has been also supported by the ERC Advanced Grant ``AnTeGeFI'' n. 226234.

\end{document}